\documentclass{amsart}
\usepackage{amsmath}
\usepackage{amscd}
\usepackage{amssymb}
\usepackage{amsthm}
\RequirePackage{filecontents}
\usepackage{fullpage}
\usepackage[numbers]{natbib}  
\usepackage[draft]{hyperref}

\begin{filecontents*}{\jobname.bib}

@article {Bn,
    AUTHOR = {Borcherds, Richard},
     TITLE = {Automorphic forms with singularities on {G}rassmannians},
   JOURNAL = {Invent. Math.},
  FJOURNAL = {Inventiones Mathematicae},
    VOLUME = {132},
      YEAR = {1998},
    NUMBER = {3},
     PAGES = {491--562},
      ISSN = {0020-9910},
   MRCLASS = {11F37 (11F22 14J28 17B67 57R57)},
  MRNUMBER = {1625724},
MRREVIEWER = {I. Dolgachev},
       DOI = {10.1007/s002220050232},
       URL = {http://dx.doi.org/10.1007/s002220050232},
}

@article {Bo,
    AUTHOR = {Borcherds, Richard},
     TITLE = {The {G}ross-{K}ohnen-{Z}agier theorem in higher dimensions},
   JOURNAL = {Duke Math. J.},
  FJOURNAL = {Duke Mathematical Journal},
    VOLUME = {97},
      YEAR = {1999},
    NUMBER = {2},
     PAGES = {219--233},
      ISSN = {0012-7094},
   MRCLASS = {11F55 (11F30 11F50 11G18)},
  MRNUMBER = {1682249},
MRREVIEWER = {Rainer Schulze-Pillot},
       DOI = {10.1215/S0012-7094-99-09710-7},
       URL = {http://dx.doi.org/10.1215/S0012-7094-99-09710-7},
}

@book {Br,
    AUTHOR = {Bruinier, Jan},
     TITLE = {Borcherds products on {O}(2, {$l$}) and {C}hern classes of
              {H}eegner divisors},
    SERIES = {Lecture Notes in Mathematics},
    VOLUME = {1780},
 PUBLISHER = {Springer-Verlag, Berlin},
      YEAR = {2002},
     PAGES = {viii+152},
      ISBN = {3-540-43320-1},
   MRCLASS = {11F55 (11F23 11F27 11G18)},
  MRNUMBER = {1903920},
MRREVIEWER = {Rainer Schulze-Pillot},
       DOI = {10.1007/b83278},
       URL = {http://dx.doi.org/10.1007/b83278},
}

@article {Br2,
    AUTHOR = {Bruinier, Jan},
     TITLE = {On the rank of {P}icard groups of modular varieties attached
              to orthogonal groups},
   JOURNAL = {Compositio Math.},
  FJOURNAL = {Compositio Mathematica},
    VOLUME = {133},
      YEAR = {2002},
    NUMBER = {1},
     PAGES = {49--63},
      ISSN = {0010-437X},
   MRCLASS = {11F55 (11G18 14C22 14G35)},
  MRNUMBER = {1918289},
MRREVIEWER = {Rainer Schulze-Pillot},
       DOI = {10.1023/A:1016357029843},
       URL = {http://dx.doi.org/10.1023/A:1016357029843},
}

@article {BK,
    AUTHOR = {Bruinier, Jan and Kuss, Michael},
     TITLE = {Eisenstein series attached to lattices and modular forms on
              orthogonal groups},
   JOURNAL = {Manuscripta Math.},
  FJOURNAL = {Manuscripta Mathematica},
    VOLUME = {106},
      YEAR = {2001},
    NUMBER = {4},
     PAGES = {443--459},
      ISSN = {0025-2611},
   MRCLASS = {11F55 (11F30)},
  MRNUMBER = {1875342},
MRREVIEWER = {V. Kumar Murty},
       DOI = {10.1007/s229-001-8027-1},
       URL = {http://dx.doi.org/10.1007/s229-001-8027-1},
}

@article {By,
    AUTHOR = {Bykovski\u\i , Victor},
     TITLE = {A trace formula for the scalar product of {H}ecke series and
              its applications},
   JOURNAL = {Zap. Nauchn. Sem. S.-Peterburg. Otdel. Mat. Inst. Steklov.
              (POMI)},
  FJOURNAL = {Rossi\u\i skaya Akademiya Nauk. Sankt-Peterburgskoe Otdelenie.
              Matematicheski\u\i \ Institut im. V. A. Steklova. Zapiski
              Nauchnykh Seminarov (POMI)},
    VOLUME = {226},
      YEAR = {1996},
    NUMBER = {Anal. Teor. Chisel i Teor. Funktsi\u\i . 13},
     PAGES = {14--36, 235--236},
      ISSN = {0373-2703},
   MRCLASS = {11F30 (11F37)},
  MRNUMBER = {1433344},
MRREVIEWER = {Pavel Guerzhoy},
       DOI = {10.1007/BF02358528},
       URL = {http://dx.doi.org/10.1007/BF02358528},
}

@article {CKW,
    AUTHOR = {Cowan, Raemeon and Katz, Daniel and White, Lauren},
     TITLE = {A new generating function for calculating the {I}gusa local
              zeta function},
   JOURNAL = {Adv. Math.},
  FJOURNAL = {Advances in Mathematics},
    VOLUME = {304},
      YEAR = {2017},
     PAGES = {355--420},
      ISSN = {0001-8708},
   MRCLASS = {11S40},
  MRNUMBER = {3558213},
MRREVIEWER = {Willem Veys},
       DOI = {10.1016/j.aim.2016.09.003},
       URL = {http://dx.doi.org/10.1016/j.aim.2016.09.003},
}

@unpublished {DHM,
	AUTHOR = {Dittmann,Moritz and Hagemeier, Heike and Schwagenscheidt, Markus},
	TITLE = {Automorphic products of singular weight for simple lattices},
	NOTE = {Preprint},
	URL = {https://www.mathi.uni-heidelberg.de/fg-sga/Preprints/AutomorphicProducts.pdf},

}

@book {EZ,
    AUTHOR = {Eichler, Martin and Zagier, Don},
     TITLE = {The theory of {J}acobi forms},
    SERIES = {Progress in Mathematics},
    VOLUME = {55},
 PUBLISHER = {Birkh\"auser Boston, Inc., Boston, MA},
      YEAR = {1985},
     PAGES = {v+148},
      ISBN = {0-8176-3180-1},
   MRCLASS = {11F11 (11F27 11F99)},
  MRNUMBER = {781735},
MRREVIEWER = {Martin L. Karel},
       DOI = {10.1007/978-1-4684-9162-3},
       URL = {http://dx.doi.org/10.1007/978-1-4684-9162-3},
}

@article {Ig,
    AUTHOR = {Igusa, Jun-ichi},
     TITLE = {Complex powers and asymptotic expansions. {I}. {F}unctions of
              certain types},
      NOTE = {Collection of articles dedicated to Helmut Hasse on his
              seventy-fifth birthday, II},
   JOURNAL = {J. Reine Angew. Math.},
  FJOURNAL = {Journal f\"ur die Reine und Angewandte Mathematik},
    VOLUME = {268/269},
      YEAR = {1974},
     PAGES = {110--130},
      ISSN = {0075-4102},
   MRCLASS = {10H25 (12B40)},
  MRNUMBER = {0347753},
MRREVIEWER = {Stephen Haris},
       DOI = {10.1515/crll.1974.268-269.110},
       URL = {http://dx.doi.org/10.1515/crll.1974.268-269.110},
}

@article {R,
    AUTHOR = {Raum, Martin},
     TITLE = {Computing genus 1 {J}acobi forms},
   JOURNAL = {Math. Comp.},
  FJOURNAL = {Mathematics of Computation},
    VOLUME = {85},
      YEAR = {2016},
    NUMBER = {298},
     PAGES = {931--960},
      ISSN = {0025-5718},
   MRCLASS = {11F30 (11F27 11F50 11G18)},
  MRNUMBER = {3434889},
MRREVIEWER = {Sander Zwegers},
       DOI = {10.1090/mcom/2992},
       URL = {http://dx.doi.org/10.1090/mcom/2992},
}

@article {Sch,
    AUTHOR = {Scheithauer, Nils},
     TITLE = {The {W}eil representation of {${\rm SL}_2(\Bbb Z)$} and some
              applications},
   JOURNAL = {Int. Math. Res. Not. IMRN},
  FJOURNAL = {International Mathematics Research Notices. IMRN},
      YEAR = {2009},
    NUMBER = {8},
     PAGES = {1488--1545},
      ISSN = {1073-7928},
   MRCLASS = {11F27 (11F20 11F55 11H56 17B67 81R10)},
  MRNUMBER = {2496771},
MRREVIEWER = {Rainer Schulze-Pillot},
}

@article {Shi,
    AUTHOR = {Shintani, Takuro},
     TITLE = {On construction of holomorphic cusp forms of half integral
              weight},
   JOURNAL = {Nagoya Math. J.},
  FJOURNAL = {Nagoya Mathematical Journal},
    VOLUME = {58},
      YEAR = {1975},
     PAGES = {83--126},
      ISSN = {0027-7630},
   MRCLASS = {10D15},
  MRNUMBER = {0389772},
MRREVIEWER = {Stephen Gelbart},
       URL = {http://projecteuclid.org/euclid.nmj/1118795445},
}

@article {Zag,
    AUTHOR = {Zagier, Don},
     TITLE = {Modular forms whose {F}ourier coefficients involve
              zeta-functions of quadratic fields},
 BOOKTITLE = {Modular functions of one variable, {VI} ({P}roc. {S}econd
              {I}nternat. {C}onf., {U}niv. {B}onn, {B}onn, 1976)},
     PAGES = {105--169. Lecture Notes in Math., Vol. 627},
 PUBLISHER = {Springer, Berlin},
      YEAR = {1977},
   MRCLASS = {10D15 (12A70)},
  MRNUMBER = {0485703},
MRREVIEWER = {K.-B. Gundlach},
}

@article {Z,
    AUTHOR = {Ziegler, Claus},
     TITLE = {Jacobi forms of higher degree},
   JOURNAL = {Abh. Math. Sem. Univ. Hamburg},
  FJOURNAL = {Abhandlungen aus dem Mathematischen Seminar der Universit\"at
              Hamburg},
    VOLUME = {59},
      YEAR = {1989},
     PAGES = {191--224},
      ISSN = {0025-5858},
   MRCLASS = {11F55 (11F37 11F46)},
  MRNUMBER = {1049896},
MRREVIEWER = {Daniel Bump},
       DOI = {10.1007/BF02942329},
       URL = {http://dx.doi.org/10.1007/BF02942329},
}
\end{filecontents*}

\theoremstyle{plain}
\newtheorem{prop}{Proposition}
\newtheorem{lem}[prop]{Lemma}

\theoremstyle{definition}
\newtheorem{defn}[prop]{Definition}
\newtheorem{rem}[prop]{Remark}
\newtheorem{ex}[prop]{Example}

\title{Poincar\'e square series for the Weil representation}
\author{Brandon Williams }

\address{Department of Mathematics \\ University of California \\ Berkeley, CA 94720}
\email{btw@math.berkeley.edu}


\begin{document}

\nocite{*}

\maketitle

\begin{abstract} We calculate the Jacobi Eisenstein series of weight $k \ge 3$ for a certain representation of the Jacobi group, and evaluate these at $z = 0$ to give coefficient formulas for a family of modular forms $Q_{k,m,\beta}$ of weight $k \ge 5/2$ for the (dual) Weil representation on an even lattice. The forms we construct always contain all cusp forms within their span. We explain how to compute the representation numbers in the coefficient formulas for $Q_{k,m,\beta}$ and the Eisenstein series of Bruinier and Kuss $p$-adically to get an efficient algorithm. The main application is in constructing automorphic products. \end{abstract}

\section{Introduction}

Let $(V,\langle -,- \rangle)$ be a vector space of finite dimension $e = \mathrm{dim}\, V$, with nondegenerate bilinear form of signature $(b^+,b^-).$ We denote by $q(x) := \frac{1}{2}\langle x, x \rangle, \, x \in V$ the associated quadratic form. Let $\Lambda \subseteq V$ be a lattice with $q(v) \in \mathbb{Z}$ for all $v \in \Lambda.$ Recall that the \textbf{Weil representation} associated to the discriminant group $\Lambda' / \Lambda$ is a unitary representation $$\rho : \tilde \Gamma := Mp_2(\mathbb{Z}) \longrightarrow \mathrm{Aut}_{\mathbb{C}}(\mathbb{C}[\Lambda'/\Lambda])$$ defined by $$\rho(T) \mathfrak{e}_{\gamma} = \mathbf{e}\Big( q(\gamma) \Big) \mathfrak{e}_{\gamma}$$ and $$\rho(S) \mathfrak{e}_{\gamma} = \frac{\mathbf{e}((b^- - b^+)/8)}{\sqrt{|\Lambda'/\Lambda|}} \sum_{\beta} \mathbf{e} \Big( -\langle \gamma, \beta \rangle \Big) \mathfrak{e}_{\beta},$$ where $\mathfrak{e}_{\gamma}$, $\gamma \in \Lambda'/\Lambda$ is the natural basis of the group ring $\mathbb{C}[\Lambda'/\Lambda],$ and $S,T$ are the usual generators of $\tilde \Gamma;$ and $\mathbf{e}(x) = e^{2\pi i x}$. We will mainly consider the dual representation $\rho^*$. \\

Several constructions of modular forms for $\rho^*$ are known. The oldest and best-known is the theta function $$\vartheta(\tau) = \sum_{\gamma \in \Lambda'/\Lambda} \sum_{v \in \Lambda} \mathbf{e}\Big(-\tau \cdot q(\gamma + v) \Big) \, \mathfrak{e}_{\gamma},$$ which is a modular form for $\rho^*$ of weight $e/2 = b^- / 2$ when $q$ is negative definite. (We use a negative definite form $q$ to get modular forms for the dual representation.) The theta function is fundamental in the analytic theory of quadratic forms and is the motivating example for the Weil representation above. Various generalizations (for example using harmonic, homogeneous polynomials) can be used to construct other modular forms; all of these are straightforward applications of Poisson summation. \\

In \cite{BK}, Bruinier and Kuss describe a formula for the coefficients of the Eisenstein series $$E_{k,0}(\tau) = \sum_{M \in \tilde \Gamma_{\infty} \backslash \tilde \Gamma} \mathfrak{e}_0 \Big|_{k,\rho^*} M$$ when $2k - b^- + b^+ \equiv 0$ mod $4$, and $\tilde \Gamma_{\infty}$ is the subgroup of $\tilde \Gamma$ generated by $T$ and $(-I,i)$. (Note that this differs slightly from the definition in \cite{BK}, where $\tilde \Gamma_{\infty}$ is the subgroup generated by only $T$. In particular, $E_{k,0}$ will always have constant term $\mathfrak{e}_0$ in this note, rather than $2 \mathfrak{e}_0$.) \\

In this note we use methods similar to \cite{BK} to derive expressions for the coefficients of another family of modular forms for $\rho^*$, namely the ``Poincar\'e square series", which we define by $$Q_{k,m,\beta} = \sum_{\lambda \in \mathbb{Z}} P_{k,\lambda^2 m,\lambda \beta}, \; \; \beta \in \Lambda'/\Lambda, \; m \in \mathbb{Z} - q(\beta), \; m > 0,$$ where $P_{k,m,\beta}$ is the Poincar\'e series of exponential type as in \cite{Br} (and we set $P_{k,0,0} = E_{k,0}$). These are interesting because the space they span always contains all cusp forms just as $P_{k,m,\beta}$ span all cusp forms, as one can see by M\"obius inversion. (To get the entire space of modular forms, we also need to include all Eisenstein series.) In most cases, $Q_{k,m,\beta}$ is the zero-value of an appropriate Jacobi Eisenstein series. We use this fact to derive a formula for the coefficients of $Q_{k,m,\beta}$; the result is presented in section 8. Similarly to the Eisenstein series of \cite{BK}, this formula involves representation numbers of quadratic polynomials modulo prime powers; we also explain how to use $p$-adic techniques (in particular, the calculations of \cite{CKW}) to calculate them rapidly. A program in SAGE to calculate these is available on the author's university webpage. \\

The main application of these formulas is in the construction of automorphic products. Under the Borcherds lift, nearly-holomorphic modular forms (poles at cusps being allowed) for the Weil representation are the input functions from which automorphic products are constructed. Modular forms of weight $2+k$ for the dual $\rho^*$ play the role of obstructions to finding nearly-holomorphic modular forms $F$ of weight $-k$ for $\rho$, as explained in section 3 of \cite{Bo}, and we can always span all obstructions by finitely many series $Q_{k,m,\beta}$. Also, we can compute the nearly-holomorphic modular form $F$ by multiplying by an appropriate power of $\Delta$ and searching for it among cusp forms for $\rho$, which itself is the dual Weil representation for the quadratic form $-q$ and therefore is also spanned by Poincar\'e square series. This method can handle arbitrary lattices (with no restriction on the level or the dimension of the space of cusp forms). We give an example of this in section 9. \\

There are other known methods of constructing (spanning sets of) modular forms in $M_k(\rho^*)$; for example, the averaging method of Scheithauer (see for example \cite{Sch}, theorem 5.4) or an algorithm of Raum \cite{R} that is also based on Jacobi forms. However, the method described in this note seems essentially unrelated to them. The general idea of these results may already be known to experts, but the details do not seem to be readily available in the literature.\\

\textbf{Acknowledgments:} I am grateful to Richard Borcherds, Jan Hendrik Bruinier, Sebastian Opitz and Martin Raum for helpful discussions.

\tableofcontents

\section{Notation}

$\Lambda$ denotes an even lattice with quadratic form $q$. Often we take $\Lambda = \mathbb{Z}^n$, with $q(v) = \frac{1}{2}v^T S v$ for a Gram matrix $S$ (a symmetric integral matrix with even diagonal). The signature of $\Lambda$ is $(b^+, b^-)$ and its dimension is $e = b^+ + b^-$. The dual lattice is $\Lambda'$. The natural basis of the group ring $\mathbb{C}[\Lambda'/\Lambda]$ is denoted $\mathfrak{e}_{\gamma}$, $\gamma \in \Lambda'/\Lambda.$ Angular brackets $\langle -,- \rangle$ denote the scalar product on $\mathbb{C}[\Lambda'/\Lambda]$ making $\mathfrak{e}_{\gamma}, \, \gamma \in \Lambda'/\Lambda$ an orthonormal basis. \\

$\mathcal{H}$ denotes the Heisenberg group; $\tilde \Gamma$ denotes the metaplectic group; and $\mathcal{J}$ denotes the meta-Jacobi group. $\sigma_{\beta}$ is the Schr\"odinger representation; $\rho$ is the Weil representation; and $\rho_{\beta}$ is a representation of $\mathcal{J}$ that arises as a semidirect product of $\sigma_{\beta}$ and $\rho$. The representations $\sigma_{\beta}^*$, $\rho^*$ and $\rho_{\beta}^*$ are unitary duals of $\sigma_{\beta}, \rho, \rho_{\beta}$. The subgroup $\mathcal{J}_{\infty}$ is the stabilizer of $\mathfrak{e}_0$ under any representation $\rho_{\beta}^*$. The elements $$S = \Big( \begin{pmatrix} 0 & -1 \\ 1 & 0 \end{pmatrix}, \sqrt{\tau} \Big), \; T = \Big( \begin{pmatrix} 1 & 1 \\ 0 & 1 \end{pmatrix}, 1 \Big), \; Z = \Big( \begin{pmatrix} -1 & 0 \\ 0 & -1 \end{pmatrix}, i \Big)$$ are given special names. \\

$E_k = E_{k,0}$ denotes the Eisenstein series (as in \cite{BK}, but normalized to have constant coefficient $1$); more generally, $E_{k,\beta}$ denotes the Eisenstein series with constant term $\frac{\mathfrak{e}_{\beta} + \mathfrak{e}_{-\beta}}{2}.$ With three arguments in the subscript, $E_{k,m,\beta}$ denotes the Jacobi Eisenstein series of weight $k$ and index $m$ for the representation $\rho_{\beta}$. $P_{k,m,\beta}$ denotes the Poincar\'e series of weight $k$ that extracts the coefficient of $q^m \mathfrak{e}_{\beta}$ from cusp forms. Finally, $Q_{k,m,\beta}$ denotes the Poincar\'e square series. Round brackets $(-,-)$ denote the Petersson scalar product of cusp forms. The symbols $|_{k,\rho^*}$ and $|_{k,m,\rho_{\beta}^*}$ denote Petersson slash operators. \\

We will commonly use the abbreviation $\mathbf{e}(x) = e^{2\pi i x}.$ Complex numbers restricted to the upper half-plane are denoted by $\tau = x+iy$; other complex numbers are denoted by $z = u + iv.$

\section{The Weil and Schr\"odinger representations}

The \textbf{metaplectic group} $\tilde \Gamma = Mp_2(\mathbb{Z})$ is the double cover of $SL_2(\mathbb{Z})$ consisting of pairs $(M,\phi)$, where $M$ is a matrix $M = \begin{pmatrix} a & b \\ c & d \end{pmatrix} \in SL_2(\mathbb{Z})$ and $\phi$ is a branch of $\sqrt{c \tau + d}$ on the upper half-plane $$\mathbb{H} = \{\tau = x + iy \in \mathbb{C}: \; y > 0\}.$$ We will typically suppress $\phi$ and denote pairs $(M,\phi)$ by simply giving the matrix $M$. \\

Recall that $\tilde \Gamma$ is presented by the generators $$T = \Big( \begin{pmatrix} 1 & 1 \\ 0 & 1 \end{pmatrix}, 1 \Big), \; \; S = \Big( \begin{pmatrix} 0 & -1 \\ 1 & 0 \end{pmatrix}, \; \sqrt{\tau} \Big),$$ (where $\sqrt{\tau}$ is the ``positive" square root $\mathrm{Im}(\sqrt{\tau}) > 0$, $\tau \in \mathbb{H}$), subject to the relations $S^8 = \mathrm{id}$ and $$S^2 = (ST)^3 = Z = (-I,i).$$

We will also consider the integer \textbf{Heisenberg group} $\mathcal{H}$, which is the set $\mathbb{Z}^3$ with group operation $$(\lambda_1, \mu_1, t_1) \cdot (\lambda_2, \mu_2, t_2) = (\lambda_1 + \lambda_2, \mu_1 + \mu_2, t_1 + t_2 + \lambda_1 \mu_2 - \lambda_2 \mu_1).$$ There is a natural action of $\tilde \Gamma$ on $\mathcal{H}$ (from the right) by $$(\lambda, \mu, t) \cdot \begin{pmatrix} a & b \\ c & d \end{pmatrix} = (a \lambda + c \mu, b \lambda + d \mu, t),$$ and we call the semidirect product $$\mathcal{J} = \mathcal{H} \rtimes \tilde \Gamma$$ by this action the \textbf{meta-Jacobi group}. It can be identified with a subgroup of $Mp_4(\mathbb{Z})$ through the embedding \begin{align*} \mathcal{J} &\rightarrow Mp_4(\mathbb{Z}), \\ \Big(\lambda,\mu,t,\begin{pmatrix} a & b \\ c & d \end{pmatrix}\Big) &\mapsto \begin{pmatrix} a & 0 & b & a \mu - b \lambda \\ \lambda & 1 & \mu & t \\ c & 0 & d & c \mu - d \lambda \\ 0 & 0 & 0 & 1 \end{pmatrix},\end{align*} under which the suppressed square root $\phi(\tau)$ of $c \tau + d$ is sent to $\tilde \phi \Big( \begin{pmatrix} \tau_1 & z \\ z & \tau_2 \end{pmatrix} \Big) = \phi(\tau_1).$

The action of $Mp_4(\mathbb{Z})$ on the Siegel upper half-space $\mathbb{H}_2$ restricts to an action of $\mathcal{J}$ on $\mathbb{H} \times \mathbb{C}$: $$\Big( \lambda,\mu,t, \begin{pmatrix} a & b \\ c & d \end{pmatrix} \Big)  \cdot (\tau,z) = \Big( \frac{a \tau + b}{c \tau + d}, \frac{ \lambda \tau + z + \mu}{c \tau + d} \Big).$$ It can also be shown directly that this defines a group action. \\

Recall that a \textbf{discriminant form} is a finite abelian group $A$ together with a quadratic form $q : A \rightarrow \mathbb{Q}/\mathbb{Z}$, i.e. a function with the properties \\ (i) $q(\lambda x) = \lambda^2 q(x)$ for all $\lambda \in \mathbb{Z}$ and $x \in A$; \\ (ii) $\langle x,y \rangle = q(x+y) - q(x) - q(y)$ is bilinear. \\ The typical example is the discriminant group of an even lattice $\Lambda \subseteq V$ in a finite-dimensional space with bilinear form; ``even'' meaning that $\langle x,x \rangle \in 2\mathbb{Z}$ for all $x \in \Lambda.$ Here, we define the dual lattice $$\Lambda' = \{y \in V: \; \langle x,y \rangle \in \mathbb{Z} \; \mathrm{for} \; \mathrm{all} \; x \in \Lambda\}$$ and take $A$ to be the quotient $A = \Lambda'/ \Lambda$, and set $q(y) = \frac{\langle y,y \rangle}{2}\, \bmod \, 1$ for $y \in A.$ Conversely, every discriminant form arises in this way.

We will review the important representations of $\mathcal{H}$, $\tilde \Gamma$ and $\mathcal{J}$ on the group ring $\mathbb{C}[A]$ of any discriminant form. $\mathbb{C}[A]$ is a complex vector space for which a canonical basis is given by $\mathfrak{e}_{\gamma}$, $\gamma \in A.$ (We will not need the ring structure.) It has a scalar product $$\Big \langle \sum_{\gamma} \lambda_{\gamma} \mathfrak{e}_{\gamma}, \sum_{\gamma} \mu_{\gamma} \mathfrak{e}_{\gamma} \Big \rangle = \sum_{\gamma} \lambda_{\gamma} \overline{\mu_{\gamma}}.$$

\begin{defn} Let $(A,q)$ be a discriminant form and $\beta \in A.$ The \textbf{Schr\"odinger representation} of $\mathcal{H}$ on $\mathbb{C}[A]$ (twisted at $\beta$) is the unitary representation \begin{align*} \sigma_{\beta} : \mathcal{H} &\rightarrow \mathrm{Aut}\, \mathbb{C}[A], \\ \sigma_{\beta}(\lambda,\mu,t) \mathfrak{e}_{\gamma} &= \mathbf{e}\Big( \mu \langle \beta,\gamma \rangle + (t - \lambda \mu) q(\beta) \Big) \mathfrak{e}_{\gamma - \lambda \beta}. \end{align*}
\end{defn}

It is straightforward to check that this actually defines a representation.

\begin{defn} Let $(A,q)$ be a discriminant form. The \textbf{Weil representation} of $\tilde \Gamma$ on $\mathbb{C}[A]$ is the unitary representation $\rho$ defined on the generators $S$ and $T$ by \begin{align*} \rho(T) \mathfrak{e}_{\gamma} &= \mathbf{e}\Big( q(\gamma) \Big) \mathfrak{e}_{\gamma}, \\ \rho(S) \mathfrak{e}_{\gamma} &= \frac{\mathbf{e}((b^- - b^+)/8)}{\sqrt{|\Lambda'/\Lambda|}} \sum_{\beta} \mathbf{e} \Big( -\langle \gamma, \beta \rangle \Big) \mathfrak{e}_{\beta}. \end{align*} Here, $(b^+,b^-)$ is the signature of any lattice with $A$ as its discriminant group; the numbers $b^+, b^-$ are themselves not well-defined, but the difference $b^- - b^+$ mod $8$ depends only on $A$.
\end{defn}

In particular, $$\rho(Z) \mathfrak{e}_{\gamma} = i^{b^- - b^+} \mathfrak{e}_{-\gamma}.$$

Shintani gave in \cite{Shi} an expression for $\rho(M)$, for any $M \in \tilde \Gamma$. We will need this later.

\begin{prop} Let $M = \begin{pmatrix} a & b \\ c & d \end{pmatrix} \in \tilde \Gamma$, and denote by $\rho(M)_{\beta,\gamma}$ the components $$\rho(M)_{\beta,\gamma} = \langle \rho(M) \mathfrak{e}_{\gamma}, \mathfrak{e}_{\beta} \rangle.$$ Suppose that $A$ is the discriminant group of an even lattice $\Lambda$ of signature $(b^+, b^-).$ \\ (i)  If $c = 0$, then $$\rho(M)_{\beta,\gamma} = \sqrt{i}^{(b^- - b^+) (1 - \mathrm{sgn}(d))} \delta_{\beta, a \gamma} \mathbf{e}\Big( ab q(\beta) \Big).$$ (ii) If $c \ne 0$, then $$\rho(M)_{\beta,\gamma} = \frac{\sqrt{i}^{(b^- - b^+)\mathrm{sgn}(c)}}{|c|^{(b^- + b^+)/2} \sqrt{|A|}} \sum_{v \in \Lambda/ c \Lambda} \mathbf{e}\Big( \frac{a q(v+\beta) - \langle \gamma, v + \beta \rangle + d q(\gamma)}{c} \Big).$$
\end{prop}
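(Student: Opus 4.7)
The plan is to verify the formula on the generators of $\tilde\Gamma$ and extend by multiplicativity, following Shintani's approach.

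For case (i), the only matrices in $\tilde\Gamma$ with $c = 0$ are of the form $\pm T^b$. When $M = T^b$ we have $a = d = 1$, so the power of $\sqrt{i}$ vanishes and the formula reduces to $\delta_{\beta,\gamma}\mathbf{e}(bq(\gamma))$, matching $\rho(T^b)\mathfrak{e}_\gamma$ directly. When $M = ZT^{-b}$ we have $a = d = -1$, so $\sqrt{i}^{(b^- - b^+)(1 - \mathrm{sgn}(-1))} = i^{b^--b^+}$ and the formula gives $i^{b^--b^+}\delta_{\beta,-\gamma}\mathbf{e}(-bq(\gamma))$, matching $\rho(Z)\rho(T^{-b})\mathfrak{e}_\gamma$ since $q(\beta) = q(-\gamma) = q(\gamma)$.

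For case (ii), denote the right-hand side by $\rho'(M)_{\beta,\gamma}$. First I would check that $\rho'(S) = \rho(S)$: for $M = S$ the quotient $\Lambda/\Lambda$ is trivial, the sum collapses to a single term, and $\sqrt{i}^{b^- - b^+} = \mathbf{e}((b^- - b^+)/8)$ matches the prefactor in the definition of $\rho(S)$. Next, I would verify that $\rho'$ is equivariant under multiplication by $T^n$ on either side. Right-multiplication sends $(c,d)$ to $(c, cn+d)$, and the exponent gains $n q(\gamma)$ uniformly in $v$, producing the factor $\mathbf{e}(nq(\gamma))$ of $\rho(T^n)$. Left-multiplication sends $(a,b)$ to $(a+nc, b+nd)$; after the substitution $v \mapsto v - n\beta$ in $\Lambda/c\Lambda$ and the bilinear expansion of $q$, the exponent shifts by $nq(\beta)$, giving the factor $\mathbf{e}(nq(\beta))$ coming from the left action of $\rho(T^n)$. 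Together with case (i), this establishes $\rho' = \rho$ on all elements of the form $T^m S T^n$.

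The main calculation, and the hardest part, is verifying the multiplicativity identity $\rho'(M_1 M_2)_{\beta,\gamma} = \sum_{\alpha \in A} \rho'(M_1)_{\beta,\alpha}\rho'(M_2)_{\alpha,\gamma}$ when both factors have nonzero bottom-left entry. Combined with the $T^n$-equivariance above, this extends $\rho' = \rho$ from $S$ to all of $\tilde\Gamma$ by induction on the length of a continued-fraction expansion $M = T^{n_0} S T^{n_1} \cdots S T^{n_k}$. Expanding the matrix product and swapping the order of summation, the inner sum over $\alpha \in A$ is a Gauss sum whose quadratic exponent is $q(\alpha)(d_1/c_1 + a_2/c_2)$; since $c_1 a_2 + d_1 c_2 = c_{12}$ is the bottom-left entry of $M_1 M_2$, completing the square and applying the standard Milgram-type identity $\sum_{\alpha \in A}\mathbf{e}(q(\alpha)) = \sqrt{|A|}\,\mathbf{e}((b^- - b^+)/8)$ on the relevant subquotients of $\Lambda'/\Lambda$ collapses the expression to the single sum required by the formula for $M_1 M_2$. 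The principal obstacle is the bookkeeping of the $\sqrt{i}^{(b^- - b^+)\mathrm{sgn}(c)}$ phases: since $\mathrm{sgn}(c_1) + \mathrm{sgn}(c_2)$ need not agree with $\mathrm{sgn}(c_{12})$ modulo $4$, the discrepancy must be absorbed by the metaplectic cocycle, which is precisely what ensures the formula lifts consistently to $Mp_2(\mathbb{Z})$ rather than merely to $SL_2(\mathbb{Z})$.
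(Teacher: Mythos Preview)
The paper does not give its own proof of this proposition; it attributes the formula to Shintani and merely states the result. Your outline follows the natural strategy---verify the formula on $S$ and on $\pm T^b$, check $T^n$-equivariance on both sides, and then propagate along a continued-fraction word $T^{n_0}S T^{n_1}\cdots S T^{n_k}$---and is broadly sound, but two points deserve care.

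First, in the left $T^n$-equivariance step the substitution ``$v \mapsto v - n\beta$ in $\Lambda/c\Lambda$'' is not well-defined, since $\beta$ lies in $\Lambda'$ and not in $\Lambda$. No substitution is needed: replacing $a$ by $a+nc$ adds $n\,q(v+\beta)$ to the exponent, and because $v\in\Lambda$ and $\beta\in\Lambda'$ one has $q(v+\beta)\equiv q(\beta)\pmod{\mathbb{Z}}$, so the factor $\mathbf{e}(nq(\beta))$ drops out directly. Second, the multiplicativity check is where all the work sits, and your description is only a sketch. Expanding $\sum_{\alpha\in A}\rho'(M_1)_{\beta,\alpha}\rho'(M_2)_{\alpha,\gamma}$ produces a triple sum over $\alpha\in\Lambda'/\Lambda$, $v_1\in\Lambda/c_1\Lambda$, $v_2\in\Lambda/c_2\Lambda$; collapsing it to a single sum over $\Lambda/c_{12}\Lambda$ requires combining the $\alpha$-sum with pieces of the $v_i$-sums (via a Chinese-remainder reparametrization) before the Gauss sum can be evaluated by Milgram's formula, and the degenerate case $c_{12}=0$ (which can occur even when $c_1,c_2\ne 0$) must be treated separately and matched against part (i). Your remark that the sign discrepancy $\mathrm{sgn}(c_1)+\mathrm{sgn}(c_2)$ versus $\mathrm{sgn}(c_{12})$ is exactly the metaplectic cocycle is correct and is the key to getting the eighth roots of unity right. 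These are precisely the details Shintani carries out; your outline is on the right track but would need these computations filled in to be a complete proof.
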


(Here, $\delta_{\beta, a \gamma} = 1$ if $\beta = a \gamma$ and $0$ otherwise.) Note in particular that $\rho$ factors through a finite-index subgroup of $\tilde \Gamma.$

The following lemma describes the interaction between the Schr\"odinger and Weil representations:

\begin{lem} Let $(A,q)$ be a discriminant form and fix $\beta \in A.$ For any $M \in \tilde \Gamma$ and $\zeta = (\lambda,\mu,t) \in \mathcal{H}$, $$\rho(M)^{-1} \sigma_{\beta}(\zeta) \rho(M) = \sigma_{\beta}(\zeta \cdot M).$$
\end{lem}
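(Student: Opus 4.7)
The plan is to reduce the identity, which is required for all $\zeta \in \mathcal{H}$ and all $M \in \tilde\Gamma$, to a pair of direct calculations on the generators $T$ and $S$ of $\tilde\Gamma$. Multiplicativity in $M$ is free: if the identity holds for $M_1$ and $M_2$, then
\begin{align*}
\rho(M_1 M_2)^{-1}\sigma_{\beta}(\zeta)\rho(M_1 M_2)
&= \rho(M_2)^{-1}\bigl(\rho(M_1)^{-1}\sigma_{\beta}(\zeta)\rho(M_1)\bigr)\rho(M_2) \\
&= \rho(M_2)^{-1}\sigma_{\beta}(\zeta\cdot M_1)\rho(M_2)
= \sigma_{\beta}\bigl(\zeta\cdot(M_1 M_2)\bigr),
\end{align*}
so the identity extends to arbitrary words in the generators. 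No separate argument is needed for the relations in $\tilde\Gamma$, since we are only checking a conjugation identity and never need to invert the presentation.

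For $M = T$ the calculation is elementary. Since $\rho(T)\mathfrak{e}_{\gamma} = \mathbf{e}(q(\gamma))\mathfrak{e}_{\gamma}$ is diagonal, conjugating $\sigma_{\beta}(\lambda,\mu,t)$ only inserts the phase
$$\mathbf{e}\bigl(q(\gamma) - q(\gamma-\lambda\beta)\bigr) = \mathbf{e}\bigl(\lambda\langle\beta,\gamma\rangle - \lambda^2 q(\beta)\bigr),$$
and a short rearrangement identifies the resulting coefficient with the one coming from $\sigma_{\beta}(\zeta\cdot T) = \sigma_{\beta}(\lambda,\lambda+\mu,t)$.

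For $M = S$ the calculation is where the substance lies. Using the explicit formula for $\rho(S)$ and unitarity to write $\rho(S)^{-1}\mathfrak{e}_{\delta} = \bar C \sum_{\alpha}\mathbf{e}(\langle\alpha,\delta\rangle)\mathfrak{e}_{\alpha}$ with $C = \mathbf{e}((b^- - b^+)/8)/\sqrt{|A|}$, one computes $\rho(S)^{-1}\sigma_{\beta}(\lambda,\mu,t)\rho(S)\mathfrak{e}_{\gamma}$ as a double sum over $A\times A$. The substitution $\delta' = \delta - \lambda\beta$ separates out an overall phase $\mathbf{e}((t+\lambda\mu)q(\beta) - \lambda\langle\gamma,\beta\rangle)$, after which the inner sum over $\delta'$ becomes $\sum_{\delta'}\mathbf{e}(\langle\alpha+\mu\beta-\gamma,\delta'\rangle)$. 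By orthogonality of characters on the finite abelian group $A$ this vanishes unless $\alpha = \gamma - \mu\beta$, contributing a factor of $|A|$ that cancels $|C|^2 = 1/|A|$. One then checks that the surviving phase equals $\mathbf{e}(-\lambda\langle\beta,\gamma\rangle + (t+\lambda\mu)q(\beta))$, which is exactly the coefficient of $\mathfrak{e}_{\gamma-\mu\beta}$ produced by $\sigma_{\beta}(\mu,-\lambda,t) = \sigma_{\beta}(\zeta\cdot S)$ (using the action $(\lambda,\mu)\cdot S = (\mu,-\lambda)$).

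The main obstacle is the bookkeeping in the $S$ computation: several phases of the form $\mathbf{e}(\lambda\mu\, q(\beta))$ and $\mathbf{e}(\lambda\langle\gamma,\beta\rangle)$ come with competing signs, and one must be careful to use $\langle\beta,\lambda\beta\rangle = 2\lambda\, q(\beta)$ when reindexing. Once this is handled, the collapse by character orthogonality produces exactly $\sigma_{\beta}(\zeta\cdot S)$, and combined with the $T$ case and the multiplicativity argument above this proves the lemma.
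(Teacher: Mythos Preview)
Your proof is correct and follows the same strategy as the paper: reduce to the generators $T$ and $S$, check $T$ directly, and verify $S$ by computation. The paper phrases the $S$ case conceptually---$\rho(S)$ is the discrete Fourier transform on $A$ and the identity is the convolution theorem---while you unwind that computation explicitly via character orthogonality, which is exactly the content of that theorem; so your argument is a fully worked-out version of the paper's sketch rather than a different approach.
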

\begin{proof} It is enough to verify this when $M$ is one of the standard generators $S$ or $T$. When $M = T$, this is easy to check directly. When $M = S$, $\rho(S)$ is essentially the discrete Fourier transform and this statement is the convolution theorem.
\end{proof}

This implies that $\sigma_{\beta}$ and $\rho$ can be combined to give a unitary representation of the meta-Jacobi group, which we denote by $\rho_{\beta}$: \begin{align*} \rho_{\beta} : \mathcal{J} &\rightarrow \mathrm{Aut} \, \mathbb{C}[A], \\ \rho_{\beta}(\zeta, M) &= \rho(M) \sigma_{\beta}(\zeta) \end{align*} for $M \in \tilde \Gamma$ and $\zeta \in \mathcal{H}.$

We will more often be interested in the dual representations $\sigma_{\beta}^*$, $\rho^*$ and $\rho_{\beta}^*.$ Since all the representations considered here are unitary, we obtain the dual representations essentially by taking complex conjugates everywhere possible.

\section{Modular forms and Jacobi forms}
Fix a lattice $\Lambda.$

\begin{defn} Let $k \in \frac{1}{2}\mathbb{Z}.$ A \textbf{modular form of weight $k$} for the (dual) Weil representation on $\Lambda$ is a holomorphic function $f : \mathbb{H} \rightarrow \mathbb{C}[\Lambda'/\Lambda]$ with the following properties: \\ (i) $f$ transforms under the action of $\tilde \Gamma$ by $$f(M \cdot \tau) = (c \tau + d)^k \rho^*(M) f(\tau), \; M \in \tilde \Gamma,$$ where if $k$ is half-integer then the branch of the square root is prescribed by $M$ as an element of $\tilde \Gamma$. Using the Petersson slash operator, this can be abbreviated as $$f|_{k,\rho^*} M = f \; \; \mathrm{where} \; \; f|_{k,\rho^*}M (\tau) = (c \tau + d)^{-k} \rho^*(M)^{-1} f(M \cdot \tau).$$ (ii) $f$ is holomorphic in $\infty$. This means in the Fourier expansion of $f$, $$f(\tau) = \sum_{\gamma \in \Lambda'/ \Lambda} \sum_{n \in \mathbb{Z} - q(\gamma)} c(n,\gamma) q^n \mathfrak{e}_{\gamma} \; q = e^{2\pi i \tau},$$ all coefficients $c(n,\gamma)$ are zero for $n < 0.$
\end{defn}

(That such a Fourier expansion exists follows from the fact that $f(\tau + 1) = \rho^*(T) f(\tau)$.) \\

The vector space of modular forms will be denoted $M_k(\rho^*)$, and the subspace of cusp forms (those $f$ for which $c(0,\gamma) = 0$ for all $\gamma \in \Lambda'/\Lambda$) is denoted $S_k(\rho^*)$. Both spaces are always finite-dimensional and their dimension (at least for $k \ge 2$) can be calculated with the Riemann-Roch formula. A fast formula for computing this under the assumption that $2k + b^+ - b^- \equiv 0 \, (4)$ was given by Bruinier in \cite{Br2}:

\begin{prop} Define the Gauss sums $$G(a,\Lambda) = \sum_{\gamma \in \Lambda'/ \Lambda} \mathbf{e}\Big( a q(\gamma) \Big), \; \; a \in \mathbb{Z},$$ and define the function $B(x) = x - \frac{\lfloor x \rfloor - \lfloor -x \rfloor}{2}$. Let $d = \#(\Lambda' / \Lambda) / \pm I$ be the number of pairs $\pm \gamma$, $\gamma \in \Lambda'/\Lambda.$ Define $$B_1 = \sum_{\gamma \in \Lambda'/\Lambda} B\Big(q(\gamma)\Big), \; \; B_2 = \sum_{\substack{\gamma \in \Lambda'/\Lambda \\ 2\gamma \in \Lambda}} B\Big(q(\gamma)\Big)$$ and $$\alpha_4 = \#\{\gamma \in \Lambda'/\Lambda: \; q(\gamma) \in \mathbb{Z}\} / \pm I.$$  Then \begin{align*} \mathrm{dim}\, M_k(\rho^*) &= \frac{d (k-1)}{12} \\ &\quad + \frac{1}{4\sqrt{|\Lambda'/\Lambda|}} \mathbf{e}\Big( \frac{2k + b^+ - b^-}{8} \Big) \mathrm{Re}[G(2,\Lambda)] \\ &\quad - \frac{1}{3 \sqrt{3 |\Lambda'/\Lambda|}} \mathrm{Re}\Big[ \mathbf{e}\Big( \frac{4k + 3(b^+ - b^-) - 10}{24} \Big) (G(1,\Lambda) + G(-3,\Lambda)) \Big] \\ &\quad + \frac{\alpha_4 + B_1 + B_2}{2}, \end{align*} and $\mathrm{dim}\, S_k(\rho^*) = \mathrm{dim}\, M_k(\rho^*) - \alpha_4.$
\end{prop}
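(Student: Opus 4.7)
The plan is to apply the Riemann--Roch / Selberg-type dimension formula for modular forms transforming under the finite-dimensional unitary representation $\rho^*$ of $\tilde\Gamma$. Since $\rho^*$ factors through a finite-index subgroup of $\tilde\Gamma$ (as noted after Shintani's formula), one can either realize $M_k(\rho^*)$ inside scalar-valued modular forms on a suitable congruence cover, or invoke the Selberg trace formula directly. For $k\ge 2$ the dual contribution $S_{2-k}(\rho)$ vanishes, so the output is
\begin{align*}
\dim M_k(\rho^*) \;=\; \tfrac{k-1}{12}\,\dim \mathbb{C}[\Lambda'/\Lambda]^{Z} \;+\; \mathrm{ell}_i \;+\; \mathrm{ell}_{\zeta_3} \;+\; \mathrm{cusp},
\end{align*}
where $\mathrm{ell}_i$, $\mathrm{ell}_{\zeta_3}$ are contributions from the elliptic fixed points $i$ and $\zeta_3=e^{2\pi i/3}$ and $\mathrm{cusp}$ is the parabolic contribution at $\infty$.

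I would handle the main term first. Using $\rho^*(Z)\mathfrak{e}_\gamma = i^{b^+ - b^-}\mathfrak{e}_{-\gamma}$ together with the automorphy factor $(c\tau+d)^{-k}$ at $Z = -I$, the hypothesis $2k+b^+-b^-\equiv 0\pmod 4$ forces the $Z$-fixed subspace of $\mathbb{C}[\Lambda'/\Lambda]$ to pick up exactly one dimension per pair $\{\gamma,-\gamma\}$, so $\dim \mathbb{C}[\Lambda'/\Lambda]^Z = d$ and the first line of the formula is accounted for. The elliptic contributions are where Shintani's formula (the preceding proposition) does the work. At $i\in\mathbb{H}$ the stabilizer in $Mp_2(\mathbb{Z})$ has order $4$ and is generated by $S$, so $\mathrm{ell}_i$ is a root-of-unity multiple of $\mathrm{tr}\,\rho^*(S)$; applying Shintani with $c=1,\ d=0$ to compute the diagonal $\rho^*(S)_{\gamma,\gamma}$ and summing, the inner sum over $v\in\Lambda/\Lambda$ collapses to a quadratic Gauss sum, producing a constant multiple of $\mathbf{e}((2k+b^+-b^-)/8)\,\overline{G(2,\Lambda)}/\sqrt{|\Lambda'/\Lambda|}$. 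Symmetry under $\gamma\mapsto-\gamma$ forces the result to appear as a real part, giving the second line of the formula. A parallel computation with $ST$ at $\zeta_3$ (stabilizer of order $6$, Shintani with $c=1,\ d=1,\ a=0$) yields the phase $\mathbf{e}((4k+3(b^+-b^-)-10)/24)$ multiplying $G(1,\Lambda)+G(-3,\Lambda)$; taking real parts gives the third line.

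The cuspidal contribution decomposes according to the eigenvalues of $\rho^*(T)$, which are $\mathbf{e}(-q(\gamma))$. The standard expression for the parabolic term in the Selberg/Eichler-type formula assembles each eigenvalue $\mathbf{e}(\alpha)$ into the sawtooth-like combination $B(\alpha) = \alpha - (\lfloor\alpha\rfloor-\lfloor-\alpha\rfloor)/2$, with an extra unit of contribution each time an eigenvalue equals $1$. Splitting the indexing set $\Lambda'/\Lambda$ according to whether $\gamma$ is fixed by $-I$ (i.e.\ $2\gamma\in\Lambda$), restricting to the $Z$-invariant half of the group ring, and separating the integer-valued terms ($q(\gamma)\in\mathbb{Z}$, giving the count $\alpha_4$) from the non-integer ones (giving $B_1$ and $B_2$) yields the combination $(\alpha_4 + B_1 + B_2)/2$. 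The identity for $\dim S_k(\rho^*)$ then follows because the Eisenstein subspace has dimension $\alpha_4$: one Eisenstein series for each pair $\pm\gamma$ with $q(\gamma)\in\mathbb{Z}$.

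The main obstacle is not the strategy but the bookkeeping. The half-integrality of $k$, the choice of square-root branch prescribed by the metaplectic structure at each fixed point, the signature prefactor $\sqrt{i}^{b^--b^+}$ appearing in Shintani's formula, and the $Z$-action pairing $\mathfrak{e}_\gamma$ with $\mathfrak{e}_{-\gamma}$ must all combine to give the clean phases $\mathbf{e}((2k+b^+-b^-)/8)$ and $\mathbf{e}((4k+3(b^+-b^-)-10)/24)$ in the statement. Once those phases are verified the assembly is purely mechanical, and similarly the cuspidal computation is routine after one correctly restricts to $Z$-invariants.
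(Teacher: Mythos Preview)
The paper does not supply its own proof of this proposition; it is stated as a known result due to Bruinier \cite{Br2} and simply quoted. Your outline---applying the Riemann--Roch/trace-formula machinery for vector-valued modular forms, extracting the main term from the $Z$-invariants, the elliptic terms at $i$ and $e^{2\pi i/3}$ from the traces of $\rho^*(S)$ and $\rho^*(ST)$ via the explicit Weil-representation formulas (whence the Gauss sums $G(2,\Lambda)$ and $G(1,\Lambda)+G(-3,\Lambda)$), and the parabolic term from the sawtooth of the $\rho^*(T)$-eigenvalues---is exactly the route taken in \cite{Br2}, so your approach matches the cited source.
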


\begin{defn} (i) The \textbf{Petersson scalar product} on $S_k(\rho^*)$ is $$(f,g) = \int_{\tilde \Gamma \backslash \mathbb{H}} \langle f(\tau), g(\tau) \rangle y^{k-2} \, \mathrm{d}x \, \mathrm{d}y, \; \; \tau = x + iy.$$ Note that $\langle f(\tau), g(\tau) \rangle y^{k-2}\, \mathrm{d}x \, \mathrm{d}y$ is invariant under $\tilde \Gamma.$ \\ (ii) Let $\gamma \in \Lambda'/\Lambda$. The $(n,\gamma)$-th \textbf{Poincar\'e series} (of exponential type) is the cusp form $P_{k,n,\gamma}$ defined by $$(f, P_{k,n,\gamma}) = \frac{\Gamma(k-1)}{(4\pi n)^{k-1}} c(n,\gamma)$$ for any cusp form $f(\tau) = \sum_{\gamma} \sum_n c(n,\gamma) q^n \mathfrak{e}_{\gamma} \in S_k(\rho^*).$ 
\end{defn}

It is clear that the Poincar\'e series span all of $S_k(\rho^*)$.

\begin{prop} For $k \ge 5/2,$ $\gamma \in \Lambda'/\Lambda$ and $n \in \mathbb{Z}-q(\gamma)$ the Poincar\'e series $P_{k,n,\gamma}$ is given by the compactly convergent series $$P_{k,n,\gamma}(\tau) = \sum_{M \in \tilde \Gamma_{\infty} \backslash \tilde \Gamma} \Big( \mathbf{e}(n \tau) \mathfrak{e}_{\gamma} \Big) \Big|_{k,\rho^*} = \frac{1}{2} \sum_{c,d} (c \tau + d)^{-k} \mathbf{e}(n (M \cdot \tau)) \rho^*(M)^{-1} v,$$ where $\tilde \Gamma_{\infty}$ is the subgroup of $\tilde \Gamma$ generated by $T$ and $Z$, and $c,d$ run through all pairs of coprime integers.
\end{prop}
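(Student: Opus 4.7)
The plan is to verify that the right-hand series defines a cusp form satisfying the Petersson inner-product identity that characterizes $P_{k,n,\gamma}$, and then to conclude by uniqueness of the Riesz representative.

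First I would check that the seed $v(\tau) = \mathbf{e}(n\tau)\mathfrak{e}_\gamma$ is invariant under $|_{k,\rho^*}T$: using $\rho^*(T)^{-1}\mathfrak{e}_\gamma = \mathbf{e}(q(\gamma))\mathfrak{e}_\gamma$, a direct calculation gives $v|_{k,\rho^*}T = \mathbf{e}(n+q(\gamma))\,v$, which equals $v$ precisely because $n \in \mathbb{Z}-q(\gamma)$. Thus $v|_{k,\rho^*}M$ depends only on the left $\langle T\rangle$-coset of $M$, and such cosets are parameterized by coprime pairs $(c,d)$. The element $Z$ sends $(c,d) \mapsto (-c,-d)$ and (via $\rho^*(Z)^{-1}\mathfrak{e}_\gamma = i^{b^- - b^+}\mathfrak{e}_{-\gamma}$) pairs the seed at $\gamma$ with the seed at $-\gamma$; averaging the two contributions yields a $\tilde\Gamma_\infty$-invariant symmetrized seed, which accounts for the factor $\tfrac12$ and the equivalence between the $\tilde\Gamma_\infty$-coset sum and the unrestricted sum over coprime pairs.

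Next I would establish locally uniform absolute convergence on $\mathbb{H}$. Because $\rho^*$ is unitary and $|\mathbf{e}(nM\tau)| = e^{-2\pi n\,\mathrm{Im}(M\tau)} \le 1$ for $n > 0$, each summand is dominated in norm by $|c\tau+d|^{-k}$, so the series is majorized by the classical Eisenstein majorant $\sum_{(c,d)=1}|c\tau+d|^{-k}$, which converges uniformly on compacta of $\mathbb{H}$ whenever $k > 2$; this is ensured by $k \ge 5/2$. Locally uniform convergence gives holomorphy, and a reindexing of cosets yields $|_{k,\rho^*}$-invariance under all of $\tilde\Gamma$, so the series defines an element $\tilde P \in M_k(\rho^*)$. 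The same majorant shows $|\tilde P(\tau)|\,y^{k/2}$ is bounded on $\mathbb{H}$, which by the standard Hecke bound forces $\tilde P \in S_k(\rho^*)$.

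Finally I would unfold the Petersson integral: for any $f = \sum c_f(n',\gamma')\,q^{n'}\mathfrak{e}_{\gamma'} \in S_k(\rho^*)$, the $\tilde\Gamma_\infty$-invariance of the symmetrized seed justifies the usual unfolding and gives
\begin{align*}
(f,\tilde P)
&= \int_{\tilde\Gamma_\infty \backslash \mathbb{H}} \langle f(\tau),\, \mathbf{e}(n\tau)\mathfrak{e}_\gamma\rangle\, y^{k-2}\,\mathrm{d}x\,\mathrm{d}y \\
&= \int_0^\infty \int_0^1 \sum_{n'} c_f(n',\gamma)\, e^{2\pi i(n'-n)x}\, e^{-2\pi(n'+n)y}\, y^{k-2}\,\mathrm{d}x\,\mathrm{d}y \\
&= c_f(n,\gamma)\int_0^\infty e^{-4\pi n y}\, y^{k-2}\,\mathrm{d}y = \frac{\Gamma(k-1)}{(4\pi n)^{k-1}}\, c_f(n,\gamma),
\end{align*}
where absolute convergence justifies swapping sum and integral, and the $|_{k,\rho^*}Z$-compatibility of $f$ collapses the symmetrization over $\pm\gamma$ back into the single factor $c_f(n,\gamma)$. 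This is precisely the defining relation of $P_{k,n,\gamma}$, so $\tilde P = P_{k,n,\gamma}$. The main technical obstacle is the bookkeeping around the metaplectic cover — tracking the prescribed branch of $\sqrt{c\tau+d}$ through the $Z$-action — which underpins both the seed-invariance step and the cancellation in the unfolding.
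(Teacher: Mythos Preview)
Your proposal is correct and follows essentially the same route as the paper: majorize by the Eisenstein series to get compact convergence and modularity, verify cuspidality, then unfold the Petersson integral to recover the defining identity. You supply more detail than the paper does on the $T$- and $Z$-invariance of the seed and the origin of the factor $\tfrac12$, and you argue cuspidality via the Hecke bound rather than the paper's termwise limit as $y\to\infty$, but these are minor variations on the same standard argument.
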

\begin{proof} The series converges compactly since it is majorized by the Eisenstein series; and its definition makes clear that it transforms as a modular form. It is a cusp form because the limit of each term in the series is zero as $y \rightarrow \infty$. To show that it satisfies the characterization by the Petersson scalar product, define $P_{k,n,\gamma}$ by the series above for now; then, for any $f(\tau) = \sum_{\gamma} \sum_{j \in \mathbb{Q}} c(n,\gamma) q^j \mathfrak{e}_{\gamma} \in S_k(\rho^*)$, using the fact that $f|_{k,\rho^*} M = f$ for any $M \in \tilde \Gamma,$ \begin{align*} (f, P_{k,n,\gamma}) &= \int_{\tilde \Gamma \backslash \mathbb{H}} \sum_{M \in \tilde \Gamma_{\infty} \backslash \Gamma} \Big \langle f \Big|_{k,\rho^*} M (\tau), \mathbf{e}(n \tau) \mathfrak{e}_{\gamma} \Big|_{k,\rho^*} M (\tau) \Big \rangle y^{k-2} \, \mathrm{d}x \, \mathrm{d}y \\ &= \int_{-1/2}^{1/2} \int_0^{\infty} \sum_{j \in \mathbb{Q}} \sum_{\beta \in \Lambda'/\Lambda} \langle \mathbf{e}(j \tau) c(j,\beta) \mathfrak{e}_{\beta}, \mathbf{e}(n \tau) \mathfrak{e}_{\gamma} \rangle y^{k-2} \, \mathrm{d}y \, \mathrm{d}x \\ &= \sum_{j \in \mathbb{Z} - q(\gamma)} \Big[ c(j,\gamma) \int_{-1/2}^{1/2} \mathbf{e}((j-n)x) \, \mathrm{d}x \cdot \int_0^{\infty} \mathbf{e}((j+n)y) y^{k-2} \, \mathrm{d}y \Big] \\ &= c(n,\gamma) \int_0^{\infty} e^{-4\pi ny} y^{k-2} \, \mathrm{d}y \\ &= \frac{\Gamma(k-1)}{(4\pi n)^{k-1}} c(n,\gamma). \qedhere \end{align*} 
\end{proof}

Now we define Poincar\'e square series:

\begin{defn} The \textbf{Poincar\'e square series} $Q_{k,m,\beta}$ is the series $$Q_{k,m,\beta} = \sum_{\lambda \in \mathbb{Z}} P_{k,\lambda^2 m, \lambda \beta}.$$
\end{defn}

Here, we set $P_{k.0,0}$ to be the Eisenstein series $E_{k,0}.$ In other words, $Q_{k,m,\beta}$ is the unique modular form such that $Q_{k,m,\beta} - E_{k,0}$ is a cusp form and $$(f, Q_{k,m,\beta} ) = \frac{2 \cdot \Gamma(k-1)}{(4m\pi)^{k-1}} \sum_{\lambda =1}^{\infty} \frac{c(\lambda^2 m, \lambda \beta)}{\lambda^{2k - 2}}$$ for all cusp forms $f(\tau) = \sum_{\gamma,n} c(n,\gamma) q^n \mathfrak{e}_{\gamma}.$

The name ``Poincar\'e square series'' appears to be due to Ziegler in \cite{Z}, where he refers to a scalar-valued Siegel modular form with an analogous definition by that name.

\begin{rem} The components of any cusp form $f = \sum_{n,\gamma} c(n,\gamma) \mathfrak{e}_{\gamma}$ can be considered as scalar-valued modular forms of higher level. Although the Ramanujan-Petersson conjecture is still open in half-integer weight, nontrivial bounds on the growth of $c(n,\gamma)$ are known. For example, Bykovskii (\cite{By}) gives the bound $c(n,\gamma) = O(n^{k/2 - 5/16 + \varepsilon})$ for all $n$ and any $\varepsilon > 0$. This implies that the series $$\sum_{\lambda \ne 0} ( f, P_{k,\lambda^2 m, \lambda \beta} )= \sum_{\lambda \ne 0} \frac{\Gamma(k-1)}{(4\pi \lambda^2 m)^{k-1}} c(\lambda^2 m, \lambda \beta)$$ converges for $k \ge 5/2$. Since $S_k(\rho^*)$ is finite-dimensional, the weak convergence of $\sum_{\lambda \ne 0} P_{k,\lambda^2 m, \lambda \beta}$ actually implies its uniform convergence on compact subsets of $\mathbb{H}.$ On the other hand, the estimate \begin{align*} \sum_{\lambda \in \mathbb{Z}} \Big| \mathbf{e}\Big( m \lambda^2 \frac{a \tau + b}{c \tau + d} \Big) \Big| &= \sum_{\lambda \in \mathbb{Z}} e^{-2\pi m \lambda^2 \frac{y}{|c \tau + d|^2}} \\ &\approx \int_{-\infty}^{\infty} e^{-2\pi m t^2 \frac{y}{|c \tau + d|^2}} \, \mathrm{d}t \\ &= \frac{|c \tau + d|}{\sqrt{2my}}, \; \; y = \mathrm{Im}(\tau) \end{align*} implies that as a triple series, $$Q_{k,m,\beta}(\tau) = \frac{1}{2} \sum_{\lambda \in \mathbb{Z}} \sum_{\mathrm{gcd}(c,d) = 1} (c \tau + d)^{-k} \mathbf{e}\Big( m \lambda^2 \frac{a \tau + b}{ c \tau + d} \Big) \, \rho^*(M)^{-1} \mathfrak{e}_{\lambda \beta}$$ converges absolutely only when $k > 3.$
\end{rem}

\begin{prop} The span of all Poincar\'e square series $Q_{k,m,\beta}$, $m \in \mathbb{N}$, $\beta \in \Lambda'/\Lambda$ contains all of $S_k(\rho^*)$.
\end{prop}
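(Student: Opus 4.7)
The plan is to deduce the result from the formula for $(f, Q_{k,m,\beta})$ stated just after Definition 2.6, together with Möbius inversion and a short asymptotic estimate. The main difficulty is that each $Q_{k,m,\beta}$ is not itself a cusp form, so one cannot directly apply Petersson duality on $S_k(\rho^*)$. I will circumvent this by noting that any two $Q_{k,m,\beta}$ and $Q_{k,m_0,\beta_0}$ share the same Eisenstein part $E_{k,0}$, so the difference $Q_{k,m,\beta} - Q_{k,m_0,\beta_0}$ lies in both $V := \mathrm{span}\{Q_{k,m,\beta}\}$ and $S_k(\rho^*)$. It therefore suffices to show that these differences, with a fixed base point $(m_0,\beta_0)$ and varying $(m,\beta)$, already span all of $S_k(\rho^*)$.

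By non-degeneracy of the Petersson scalar product on the finite-dimensional space $S_k(\rho^*)$, this spanning statement is equivalent to: every cusp form $h = \sum_{n,\gamma} c(n,\gamma) q^n \mathfrak{e}_\gamma$ for which $(h, Q_{k,m,\beta})$ is independent of $(m,\beta)$ must vanish. Given such an $h$, I would use the pairing formula
$$(h, Q_{k,m,\beta}) = \frac{2\Gamma(k-1)}{(4\pi m)^{k-1}} \sum_{\lambda \geq 1} \frac{c(\lambda^2 m, \lambda \beta)}{\lambda^{2k-2}}$$
(the $\lambda = 0$ contribution from $P_{k,0,0} = E_{k,0}$ vanishes by cuspidality of $h$). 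Bykovskii's bound $c(n,\gamma) = O(n^{k/2 - 5/16 + \epsilon})$ from Remark 2.5 yields $(h, Q_{k,m,\beta}) = O(m^{11/16 - k/2 + \epsilon})$ as $m \to \infty$, which tends to zero for $k \geq 5/2$. Hence the constant value of $(h, Q_{k,m,\beta})$ must be $0$, i.e.\ $(h, Q_{k,m,\beta}) = 0$ for all $(m,\beta)$.

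Möbius inversion then completes the argument. For each fixed $(m_0,\beta_0)$, I multiply the resulting identity $\sum_{\lambda \geq 1} c(\lambda^2 m, \lambda\beta)/\lambda^{2k-2} = 0$, applied with $(m,\beta) = (\nu^2 m_0, \nu \beta_0)$, by $\mu(\nu)/\nu^{2k-2}$ and sum over $\nu \geq 1$. Setting $n = \lambda \nu$ and reordering the resulting double sum gives
$$0 = \sum_{n \geq 1} \frac{c(n^2 m_0, n \beta_0)}{n^{2k-2}} \sum_{\nu \mid n} \mu(\nu) = c(m_0,\beta_0),$$
using $\sum_{d \mid n} \mu(d) = [n = 1]$. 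The absolute convergence needed to justify the interchange is exactly what Remark 2.5 provides for $k \geq 5/2$, via the same Bykovskii bound. Hence $h = 0$, proving the proposition. The step I expect to require the most care is the asymptotic decay of $(h, Q_{k,m,\beta})$, since it is what transports Petersson non-degeneracy from the cuspidal part back to the non-cuspidal $Q_{k,m,\beta}$ itself.
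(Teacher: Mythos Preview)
Your argument is correct. It uses the same three ingredients as the paper's proof --- finite-dimensionality of $S_k(\rho^*)$, M\"obius inversion, and the Bykovskii bound --- but organizes them differently. The paper works ``forward'': it writes each Poincar\'e series as the weakly convergent series $P_{k,m,\beta} = \tfrac{1}{2}\sum_{d\ge 1}\mu(d)\bigl(Q_{k,d^2 m,d\beta}-E_{k,0}\bigr)$, bounds $|(f,Q_{k,d^2 m,d\beta})| \le C\,d^{-9/8+\varepsilon}$ via Bykovskii, and then uses that a finite-dimensional subspace is closed to conclude that the limit lies in the span. You instead argue ``by duality'': you pass to the cuspidal differences $Q_{k,m,\beta}-Q_{k,m_0,\beta_0}$, show that any $h$ orthogonal to all of them has $(h,Q_{k,m,\beta})$ constant, force that constant to be zero via the asymptotic $(h,Q_{k,m,\beta})=O(m^{11/16-k/2+\varepsilon})$, and finally M\"obius-invert the resulting identity on Fourier coefficients.

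One remark: your decay step as $m\to\infty$ is genuinely doing work, not just cosmetics. It is what excludes the possibility that all $Q_{k,m,\beta}$ share the \emph{same} cuspidal part (in which case their span would be one-dimensional and miss $S_k$). In the paper's formulation this point is handled somewhat implicitly by writing the M\"obius sum with $E_{k,0}$ subtracted termwise; your version makes the underlying reason explicit. Either way, the two arguments are minor rearrangements of each other.
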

\begin{proof} Since $\mathrm{Span}(Q_{k,m,\beta})$ is finite-dimensional, it is enough to find all Poincar\'e series as weakly convergent infinite linear combinations of $Q_{k,m,\beta}.$ M\"obius inversion implies the formal identity 
$$P_{k,m,\beta} = \frac{1}{2}\Big( P_{k,m,\beta} + P_{k,m,-\beta}\Big) = \frac{1}{2} \sum_{d \in \mathbb{N}} \mu(d) \Big[ Q_{k,d^2 m, d \beta} - E_{k,0} \Big].$$ The series on the right converges (weakly) in $S_k(\rho^*)$ because we can bound $$\Big|( f, Q_{k,d^2 m, d \beta} ) \Big| \le \sum_{\lambda \in \mathbb{Z}} \frac{\Gamma(k-1)}{(4 \pi \lambda^2 d^2 m)^{k-1}} \Big| c(\lambda^2 d^2 m, \lambda d \beta) \Big| \le C \cdot d^{-9/8 + \varepsilon}$$ for an appropriate constant $C$ and all cusp forms $f(\tau) = \sum_{\gamma} \sum_n c(n,\gamma) q^n \mathfrak{e}_{\gamma}$, where we again use the bound $c(n,\gamma) = O(n^{k/2 - 5/16 + \varepsilon}).$
\end{proof}

Finally, we will need to define Jacobi forms. We will consider Jacobi forms for the representation $\rho_{\beta}^*$ defined in section 3. The book \cite{EZ} remains the standard reference for (scalar-valued) Jacobi forms, and much of the following work is based on the calculations there. \\

\begin{defn} A \textbf{Jacobi form} for $\rho_{\beta}^*$ of weight $k$ and index $m$ is a holomorphic function $\Phi : \mathbb{H} \times \mathbb{C} \rightarrow \mathbb{C}[\Lambda'/\Lambda]$ with the following properties: \\ (i) For any $M = \begin{pmatrix} a & b \\ c & d \end{pmatrix} \in \tilde \Gamma$, $$\Phi\Big( \frac{ a \tau + b}{c \tau + d} , \frac{z}{c \tau + d} \Big) = (c \tau + d)^k \mathbf{e}\Big( \frac{mcz^2}{c \tau + d} \Big) \cdot \rho^*(M) \Phi(\tau,z);$$ (ii) For any $\zeta = (\lambda,\mu,t) \in \mathcal{H}$, $$\Phi(\tau, z+ \lambda \tau + \mu) = \mathbf{e}\Big( -m\lambda^2 \tau - 2m \lambda z - m(\lambda \mu + t) \Big) \cdot \sigma_{\beta}^*(\zeta)\Phi(\tau,z);$$ (iii) If we write out the Fourier series of $\Phi$ as $$\Phi(\tau,z) = \sum_{\gamma \in \Lambda'/\Lambda} \sum_{n,r \in \mathbb{Q}} c(n,r,\gamma) q^n \zeta^r \mathfrak{e}_{\gamma}, \; \; q = e^{2\pi i \tau}, \; \zeta = e^{2\pi i z},$$ then $c(n,r,\gamma) = 0$ whenever $n < r^2 / 4m$.
\end{defn}

We define a Petersson slash operator in this setting as follows: for $M \in \tilde \Gamma$ and $\zeta \in \mathcal{H}$, \begin{align*} &\quad \Phi \Big|_{k,m,\rho_{\beta}^*} (\zeta,M) (\tau,z) \\ &= (c \tau + d)^{-k} \mathbf{e}\Big( m \lambda^2 \tau + 2m \lambda z  + m(\lambda \mu + t) - \frac{cm (z + \lambda \tau + \mu)^2}{c \tau + d} \Big) \cdot \rho_{\beta}^*(\zeta, M)^{-1} \Big[ \Phi\Big( \frac{a \tau + b}{c \tau + d}, \frac{z + \lambda \tau + \mu}{c \tau + d} \Big) \Big].\end{align*} Then conditions (i),(ii) of being a Jacobi form can be summarized as $$\Phi \Big|_{k,m,\rho_{\beta}^*} (\zeta, M) = \Phi$$ for all $(\zeta,M) \in \mathcal{J}.$

\begin{rem} We will consider some basic consequences of the transformation law under $\mathcal{J}$ for a Jacobi form $\Phi(\tau,z) = \sum_{\gamma,n,r} c(n,r,\gamma) q^n \zeta^r \mathfrak{e}_{\gamma}.$ First, letting $\zeta = (0,0,1) \in \mathcal{H}$, we see that $$\Phi = \mathbf{e}\Big( -m - q(\beta) \Big) \Phi$$ so there are no nonzero Jacobi forms unless $m \in \mathbb{Z} - q(\beta).$  Also, $$\sum_{\gamma} \sum_{n,r} c(n,r,\gamma) \mathbf{e}(n) q^n \zeta^r \mathfrak{e}_{\gamma} = \Phi(\tau + 1,z) = \rho^*(T) \Phi(\tau) = \sum_{\gamma} \sum_{n,r} c(n,r,\gamma) \mathbf{e}(-q(\gamma)) q^n \zeta^r \mathfrak{e}_{\gamma}$$ implies that $c(n,r,\gamma) = 0$ unless $n \in \mathbb{Z} - q(\gamma).$ Similarly, $$\sum_{\gamma} \sum_{n,r} c(n,r,\gamma) \mathbf{e}(r) q^n \zeta^r \mathfrak{e}_{\gamma} = \Phi( \tau,z+1) = \sigma_{\beta}^*(0,1,0) \Phi(\tau,z) = \sum_{\gamma} \sum_{n,r} c(n,r,\gamma) \mathbf{e}\Big(-\langle \beta,\gamma \rangle \Big) q^n \zeta^r \mathfrak{e}_{\gamma}$$ implies that $c(n,r,\gamma) = 0$ unless $r \in \mathbb{Z} - \langle \gamma, \beta \rangle.$ The transformation under $Z$ implies $$\sum_{n,r,\gamma} c(n,r,\gamma) q^n \zeta^{-r}\mathfrak{e}_{\gamma} = \Phi(\tau,-z) = (-1)^k \rho^*(Z) \Phi(\tau,z) = i^{2k + b^+ - b^-} \sum_{n,r,\gamma} c(n,r,\gamma) q^n \zeta^r \mathfrak{e}_{-\gamma},$$ so there are no nonzero Jacobi forms unless $2k + b^+ - b^- \in 2 \mathbb{Z}$. (We will always make the assumption $$2k + b^+ - b^- \in 4 \mathbb{Z},$$ since the $\mathfrak{e}_0$-component of any Jacobi form will otherwise vanish identically. In this case $c(n,r,\gamma) = c(n,-r,-\gamma)$ for all $n,r,\gamma.$) Finally, we remark that the transformation under $\zeta = (\lambda,0,0)$ implies \begin{align*} \sum_{n,r,\gamma} c(n,r,\gamma) q^{n + r \lambda} \zeta^r \mathfrak{e}_{\gamma} &= \Phi(\tau,z + \lambda \tau) \\ &= q^{-m \lambda^2} \zeta^{-2m \lambda} \sigma_{\beta}^*(\lambda,0,0) \Phi(\tau,z) \\ &= \sum_{n,r,\gamma} c(n,r,\gamma) q^{n - m \lambda^2} \zeta^{r - 2m \lambda} \mathfrak{e}_{\gamma - \lambda \beta} \end{align*} and therefore $c(n, r,\gamma) = c(n + r \lambda + m \lambda^2, r + 2m \lambda, \gamma + \lambda \beta)$ for all $\lambda \in \mathbb{Z}$.
\end{rem}

\section{The Jacobi Eisenstein series}

Fix a lattice $\Lambda.$ Let $\mathcal{J}_{\infty}$ denote the subgroup of $\mathcal{J}$ that fixes the constant function $\mathfrak{e}_0$ under the action $|_{k,m,\rho^*_{\beta}}$. This is independent of $\beta$ and it is the group generated by $T,Z \in \tilde \Gamma$ and the elements of the form $(0,\mu,t) \in \mathcal{H}$ in the Heisenberg group. \\

\begin{defn} The \textbf{Jacobi Eisenstein series} twisted at $\beta \in \Lambda'$ of weight $k$ and index $m \in \mathbb{Z}-q(\beta)$ is $$E_{k,m,\beta}(\tau,z) = \sum_{(M,\zeta) \in \mathcal{J}_{\infty} \backslash \mathcal{J}} \mathfrak{e}_0 \Big|_{k,m,\rho^*_{\beta}} (M,\zeta) (\tau,z).$$ It is clear that this is a Jacobi form of weight $k$ and index $m$ for the representation $\rho_{\beta}^*$. More explicitly, we can write it in the form $$E_{k,m,\beta}(\tau,z) = \frac{1}{2} \sum_{c,d} (c \tau + d)^{-k} \sum_{\lambda \in \mathbb{Z}} \mathbf{e} \Big( m \lambda^2 (M \cdot \tau) + \frac{2m \lambda z}{c \tau + d} - \frac{cmz^2}{c \tau + d} \Big) \rho^*(M)^{-1} \sigma_{\beta}^*(\lambda,0,0)^{-1} \mathfrak{e}_0.$$
\end{defn}

\begin{rem} This series converges absolutely when $k > 3$. In that case the zero-value $E_{k,m,\beta}(\tau,0)$ is the Poincar\'e square series $Q_{k,m,\beta}(\tau)$, as one can see by swapping the order of the sum over $(c,d)$ and the sum over $\lambda.$
\end{rem}

$E_{k,m,\beta}$ has a Fourier expansion of the form $$E_{k,m,\beta}(\tau,z) = \sum_{\gamma \in \Lambda'/\Lambda} \sum_{n \in \mathbb{Z} - q(\gamma)} \sum_{r \in \mathbb{Z} - \langle \gamma,\beta \rangle} c(n,r,\gamma) q^n \zeta^r \mathfrak{e}_{\gamma}.$$ We will calculate its coefficients. The contribution from $c = 0$ and $d = \pm 1$ is $$\sum_{\lambda \in \mathbb{Z}} \mathbf{e} \Big( m \lambda^2 \tau + 2m \lambda z \Big) \mathfrak{e}_{\lambda \beta}.$$ We denote the contribution from all other terms by $c'(n,r,\gamma)$; so $$E_{k,m,\beta}(\tau,z) = \sum_{\lambda \in \mathbb{Z}} \mathbf{e}\Big(m \lambda^2 \tau + 2 m \lambda z \Big) \mathfrak{e}_{\lambda \beta} + \sum_{\gamma \in \Lambda'/\Lambda} \sum_{n \in \mathbb{Z} - q(\gamma)} \sum_{r \in \mathbb{Z} - \langle \gamma,\beta \rangle} c'(n,r,\gamma) q^n \zeta^r \mathfrak{e}_{\gamma}.$$ Write $\tau = x+iy$ and $z = u + iv$. Then $c'(n,r,\gamma)$ is given by the integral \begin{align*} &\quad c'(n,r,\gamma) \\ &= \frac{1}{2} \int_0^1 \int_0^1 \sum_{c \ne 0} \sum_{\lambda} (c \tau + d)^{-k} \mathbf{e} \Big( m \lambda^2 (M \cdot \tau) + \frac{2m \lambda z}{c \tau + d} - \frac{cmz^2}{c \tau + d} \Big) \mathbf{e} (-n \tau - rz) \langle  \rho^*(M)^{-1} \sigma_{\beta}^*(\lambda,0,0)^{-1} \mathfrak{e}_0, \mathfrak{e}_{\gamma} \rangle \,  \mathrm{d}x \, \mathrm{d}u \\ &= \frac{1}{2} \sum_{c \ne 0} \sum_{d \, (c)^*} \sum_{\lambda} \rho(M)_{\lambda \beta, \gamma} \int_{-\infty}^{\infty} \int_0^1 (c \tau + d)^{-k} \mathbf{e} \Big( -n \tau - rz + m \lambda^2 (M \cdot \tau) + \frac{2m \lambda z}{c \tau +d} - \frac{cmz^2}{c \tau + d} \Big) \, \mathrm{d}u \, \mathrm{d}x. \end{align*} Here, the notation $\sum_{d \, (c)^*}$ implies that the sum is taken over representatives of $\Big( \mathbb{Z}/c \mathbb{Z} \Big)^{\times}.$ The double integral simplifies to \begin{align*} &\quad \int_{-\infty}^{\infty} \int_0^1 (c \tau + d)^{-k} \mathbf{e} \Big( -n \tau - rz + m \lambda^2 (M \cdot \tau) + \frac{2m \lambda z}{c \tau +d} - \frac{cmz^2}{c \tau + d} \Big) \, \mathrm{d}u \, \mathrm{d}x \\ &=c^{-k} \mathbf{e} \Big( \frac{am \lambda^2 + nd}{c} \Big) \int_{-\infty}^{\infty} \int_0^1 \mathbf{e} \Big( -n \tau - rz - m(cz - \lambda)^2 / (c^2 \tau) \Big) \, \mathrm{d}u \, \mathrm{d}x\end{align*} by substituting $\tau - d/c$ into $\tau.$ \\

The inner integral over $u$ is easiest to evaluate within the sum over $\lambda$. Namely, \begin{align*} &\quad \sum_{\lambda \in \mathbb{Z}} \rho(M)_{\lambda \beta, \gamma} \mathbf{e} \Big( \frac{am \lambda^2}{c} \Big) \int_0^1 \mathbf{e} \Big( - rz - m \frac{(cz - \lambda)^2}{c^2 \tau} \Big) \, \mathrm{d}u \\ &= \sum_{\lambda \in \mathbb{Z}} \rho(M)_{\beta \lambda, \gamma} \mathbf{e} \Big( \frac{am \lambda^2 - r \lambda}{c} \Big) \int_{-\lambda / c}^{1 - \lambda / c} \mathbf{e} \Big( -rz - mz^2 / \tau \Big) \, \mathrm{d}u \end{align*} after substituting $z + \lambda / c$ into $z$. Note that \begin{align*} &\quad \rho(M)_{\lambda \beta, \gamma} \mathbf{e} \Big( \frac{am \lambda^2 - r\lambda}{c} \Big) \\ &= \frac{\sqrt{i}^{(b^- - b^+) \mathrm{sgn}(c)}}{|c|^{(b^- + b^+) / 2} \sqrt{|\Lambda'/\Lambda|}} \sum_{v \in \Lambda / c \Lambda} \mathbf{e} \Big( \frac{a q(v + \lambda \beta) - \langle v + \lambda \beta, \gamma \rangle + d q(\gamma) + a m \lambda^2 - r \lambda}{c} \Big) \\ &= \frac{\sqrt{i}^{(b^- - b^+) \mathrm{sgn}(c)}}{|c|^{(b^- + b^+) / 2} \sqrt{|\Lambda'/\Lambda|}}  \sum_{v \in \Lambda / c \Lambda} \mathbf{e} \Big( \frac{a \lambda^2 [m + q(\beta)] + \lambda [a \langle v, \beta \rangle - \langle \beta, \gamma \rangle - r] + a q(v) - \langle v, \gamma \rangle + d q(\gamma)}{c} \Big) \end{align*} depends only on the remainder of $\lambda$ mod $c$, because $m + q(\beta)$ and $r + \langle \beta, \gamma \rangle$ are integers. Continuing, we see that \begin{align*}&\quad \sum_{\lambda \in \mathbb{Z}} \rho(M)_{\beta \lambda, \gamma} \mathbf{e} \Big( \frac{am \lambda^2 - r \lambda}{c} \Big) \int_{-\lambda /c}^{1 - \lambda / c} \mathbf{e} \Big( -rz - mz^2 / \tau \Big) \, \mathrm{d}u \\ &= \frac{\sqrt{i}^{(b^- - b^+) \mathrm{sgn}(c)}}{|c|^{(b^- + b^+) / 2} \sqrt{|\Lambda'/\Lambda|}}  \sum_{\substack{v \in \Lambda / c \Lambda \\ \lambda \in \mathbb{Z}/c \mathbb{Z}}}  \mathbf{e} \Big( \frac{a \lambda^2 [m + q(\beta)] + \lambda [a \langle v, \beta \rangle - \langle \beta, \gamma \rangle - r] + a q(v) - \langle v, \gamma \rangle + d q(\gamma)}{c} \Big) \times \\ &\quad\quad\quad \times \int_{-\infty}^{\infty} \mathbf{e} \Big( -rz - mz^2 / \tau \Big) \, \mathrm{d}u. \end{align*}

The Gaussian integral is well-known: $$\int_{-\infty}^{\infty} \mathbf{e} \Big( -rz - mz^2 / \tau \Big) \, \mathrm{d}u = \mathbf{e}\Big( r^2 \tau/4m \Big) \sqrt{\tau / 2im}.$$ We are left with \begin{align*} c'(n,r,\gamma) = \frac{1}{2 \sqrt{2im}} \sum_{c \ne 0} \frac{\sqrt{i}^{(b^- - b^+) \mathrm{sgn}(c)}}{|c|^{(b^- + b^+) / 2} \sqrt{|\Lambda'/\Lambda|}}  c^{-k} K_c(\beta,m,\gamma,n,r) \int_{-\infty}^{\infty} \tau^{1/2 - k} \mathbf{e}\Big(\tau (r^2 / 4m -n) \Big) \, \mathrm{d}x, \end{align*} where $K_c(\beta,m,\gamma,n,r)$ is a Kloosterman sum: \begin{align*} &\quad K_c(\beta,m,\gamma,n,r) \\ &= \sum_{d \, (c)^*} \sum_{\substack{v \in \Lambda / c \Lambda \\ \lambda \in \mathbb{Z}/c \mathbb{Z}}}  \mathbf{e} \Big( \frac{a \lambda^2 [m + q(\beta)] + \lambda [a \langle v, \beta \rangle - \langle \beta, \gamma \rangle - r] + a q(v) - \langle v, \gamma \rangle + d q(\gamma) + dn}{c} \Big) \\ &= \sum_{\substack{v \in \Lambda / c \Lambda \\ \lambda \in \mathbb{Z}/c \mathbb{Z}}} \sum_{d \, (c)^*} \mathbf{e} \Big( \frac{d}{c} \Big[ \lambda^2 (m + q(\beta)) + \lambda (\langle v,\beta \rangle - \langle \gamma,\beta \rangle - r) + q(v) - \langle v, \gamma \rangle + q(\gamma) +n \Big] \Big) \\ &= \sum_{\substack{v \in \Lambda / c \Lambda \\ \lambda \in \mathbb{Z}/c \mathbb{Z}}} \sum_{d \, (c)^*} \mathbf{e} \Big( \frac{d}{c}\Big[q(v + \lambda \beta - \gamma) + m \lambda^2 - r \lambda + n\Big] \Big). \end{align*} (In the second equality we have replaced $v$ and $\lambda$ by $d \cdot v$ and $d \cdot  \lambda$.)

The integral $\int_{-\infty}^{\infty} \tau^{1/2 - k} \mathbf{e}\Big( \tau (r^2 / 4m - n) \Big) \, \mathrm{d}x$ is $0$ when $r^2 / 4m - n \ge 0$, since the integral is independent of $y = \mathrm{Im}(\tau)$ and tends to $0$ as $y \rightarrow \infty.$ When $r^2 / 4m - n < 0$, we deform the contour to a keyhole and use Hankel's integral $$\frac{1}{\Gamma(s)} = \frac{1}{2\pi i} \oint_{\gamma} e^\tau \tau^{-s} \, \mathrm{d}\tau$$ to conclude that $$\int_{-\infty}^{\infty} \tau^{1/2 - k} \mathbf{e}\Big( \tau (r^2 / 4m - n) \Big) \, \mathrm{d}x = \frac{2\pi i \cdot  (2\pi i (r^2 / 4m - n))^{k - 3/2}}{\Gamma(k-1/2)}$$ and therefore 
\begin{align*} c'(n,r,\gamma) &= \frac{(2\pi i)^{k-1/2} (r^2 / 4m - n)^{k-3/2}}{2 \cdot  \Gamma(k-1/2) \sqrt{2im |\Lambda'/\Lambda|}} \sum_{c \ne 0} \frac{\sqrt{i}^{(b^- - b^+) \mathrm{sgn}(c)}}{|c|^{(b^- + b^+) / 2}} c^{-k} K_c(\beta,m,\gamma,n,r) \\ &= \frac{(-i)^k \pi^{k-1/2} (4mn - r^2)^{k-3/2}}{2^{k-3} m^{k-1} \Gamma(k-1/2) \sqrt{|\Lambda'/\Lambda|}} \sum_{c \ne 0} \frac{\sqrt{i}^{(b^- - b^+) \mathrm{sgn}(c)}}{|c|^{(b^- + b^+) / 2}} c^{-k} K_c(\beta,m,\gamma,n,r). \end{align*}

We can use $$\sqrt{i}^{(b^- - b^+) \mathrm{sgn}(c)} \mathrm{sgn}(c)^k (-i)^k = (-1)^{(2k - b^- + b^+) / 4}$$ and the fact that $K_c(\beta,m,\gamma,n,r) = K_{-c}(\beta,m,\gamma,n,r)$ to write this as $$c'(n,r,\gamma) = \frac{(-1)^{(2k - b^- + b^+) / 4} \pi^{k-1/2} (4mn - r^2)^{k-3/2}}{2^{k-2} m^{k-1} \Gamma(k-1/2) \sqrt{|\Lambda'/\Lambda|}} \sum_{c = 1}^{\infty} c^{-k - e/2} K_c(\beta,m,\gamma,n,r).$$

\begin{rem} Using the evaluation of the Ramanujan sum, $$\sum_{d \, (c)^*} \mathbf{e} \Big( \frac{d}{c}N\Big) = \sum_{a | (c,N)} \mu(c/a) a,$$ where $\mu$ is the M\"obius function, it follows that \begin{align*} &\quad K_c(\beta,m,\gamma,n,r) \\ &= \sum_{a | c} \mu(c / a) a \cdot \#\Big\{ (v,\lambda) \in (\Lambda \oplus \mathbb{Z}) / (c): \; q(v + \lambda \beta - \gamma) + m \lambda^2 - r \lambda + n = 0 \, (c) \} \\ &=  \sum_{a | c} \mu(c/a)a (c/a)^{e+1} \cdot \#\Big\{ (v,\lambda) \in (\Lambda \oplus \mathbb{Z}) / (a): \;  q(v + \lambda \beta - \gamma) + m \lambda^2 - r \lambda + n = 0 \, (c) \Big\} \\ &= c^{e+1} \sum_{a|c} \mu(c/a) a^{-e} \mathbf{N}(a), \end{align*} where we define $$\mathbf{N}(a) = \#\Big\{ (v,\lambda) \in (\Lambda \oplus \mathbb{Z}) / (a): \;  q(v + \lambda \beta - \gamma) + m \lambda^2 - r \lambda + n \equiv 0 \, (a) \Big\}$$ and we use the fact that this congruence depends only on the remainder of $v$ and $\lambda$ mod $a$ (rather than $c$).
\end{rem}

\begin{rem} If we identify $\Lambda = \mathbb{Z}^n$ and write $q$ as $q(v) = \frac{1}{2} v^T S v$ with a symmetric integer matrix $S$ with even diagonal (its Gram matrix), then we can rewrite \begin{align*} &\quad \lambda^2 m + q(v + \lambda \beta - \gamma) - r \lambda + n \\ &= \frac{1}{2} (\tilde v - \tilde \gamma)^T \begin{pmatrix} S & S \beta \\ (S \beta)^T & 2(m + q(\beta)) \end{pmatrix} (\tilde v - \tilde \gamma) + \tilde n \end{align*} with $\tilde v = (v,\lambda)$ and $\tilde \gamma = (\gamma, -\frac{r}{2(m + q(\beta))})$ and $\tilde n = n + \frac{r}{2(m + q(\beta))} \langle \gamma, \beta \rangle - \frac{r^2}{4 (m + q(\beta))}.$ Therefore, $\mathbf{N}(a)$ equals the representation number $N_{\tilde \gamma, \tilde n}(a)$ in the notation of \cite{BK}. The analysis there does not seem to apply to this situation because $\tilde \gamma$ has no reason to be in the dual lattice of this larger quadratic form, and because $\tilde n$ can be negative or even zero. \\

In the particular case $\beta = 0$, the coefficient $c(n,r,\gamma)$ does in fact occur as the coefficient of $$(\tilde n,\tilde \gamma) = (n-r^2 / 4m, (\gamma,r/2m))$$ in the Eisenstein series $E_{k,0}$ attached to the lattice with Gram matrix $\begin{pmatrix} S & 0 \\ 0 & 2m \end{pmatrix}.$ This can be seen as a case of the theta decomposition, which gives more generally an isomorphism between Jacobi forms for a trivial action of the Heisenberg group and vector-valued modular forms, and identifies Jacobi Eisenstein series with vector-valued Eisenstein series.
\end{rem}

\begin{rem} We consider the Dirichlet series $$\tilde L(s) = \sum_{c=1}^{\infty} c^{-s} K_c(\beta,m,\gamma,n,r).$$ Since $K_c$ is $c^{e+1}$ times the convolution of $\mu(a)$ and $a^{-e} \mathbf{N}(a)$, it follows formally that $$\tilde L(s+e+1) = \zeta(s)^{-1} L(s+e)$$ where we have defined $$L(s) = \sum_{c=1}^{\infty} c^{-s} \mathbf{N}(c).$$ Since $\mathbf{N}(a)$ is multiplicative (for coprime $a_1,a_2$, a pair $(v,\lambda)$ solves the congruence modulo $a_1 a_2$ if and only if it does so modulo both $a_1$ and $a_2$), $L(s)$ can be written as an Euler product $$L(s) = \prod_{p \, \mathrm{prime}} L_p(s) \; \; \mathrm{with} \; \; L_p(s) = \sum_{\nu=0}^{\infty} \mathbf{N}(p^{\nu}) p^{-\nu s}.$$ The functions $L_p(s)$ are always rational functions in $p^{-s}$ and in particular they have a meromorphic extension to $\mathbb{C}$; and it follows that $c'(n,r,\gamma)$ is the value of the analytic continuation of $$\frac{(-1)^{(2k - b^- + b^+) / 4} \pi^{k-1/2} (4mn - r^2)^{k-3/2}}{2^{k-2} m^{k-1} \Gamma(k-1/2) \zeta(s - e) \sqrt{|\Lambda'/\Lambda|}} \prod_{p \, \mathrm{prime}} L_p(s)$$ at $s = k + e/2 - 1.$
\end{rem}

\section{Evaluation of $L_p(s)$}

In this section we review the calculation of Igusa zeta functions of quadratic polynomials due to Cowan, Katz and White in \cite{CKW} and apply it to calculate the Euler factors $L_p(k + e/2 - 1).$ \\

\begin{defn} Let $f \in \mathbb{Z}_p[X_1,...,X_e]$ be a polynomial of $e$ variables. The \textbf{Igusa zeta function} of $f$ at a prime $p$ is the $p$-adic integral $$\zeta_{Ig}(f;p;s) = \int_{\mathbb{Z}_p^e} |f(x)|^s \, \mathrm{d}x, \; \; s \in \mathbb{C}.$$ In other words, $$\zeta_{Ig}(f;p;s) = \sum_{\nu=0}^{\infty}\mathrm{Vol}\Big(\{ x \in \mathbb{Z}_p^e: \; |f(x)|_p = p^{-\nu} \} \Big) p^{-\nu s},$$ where $\mathrm{Vol}$ denotes the Haar measure on $\mathbb{Z}_p^e$ normalized such that $\mathrm{Vol}(\mathbb{Z}_p^e) = 1.$
\end{defn}

Igusa proved (\cite{Ig}) that $\zeta_{Ig}(f;p;s)$, which is a priori only a formal power series in $p^{-s}$, is in fact a rational function of $p^{-s}$. In particular, it has a meromorphic continuation to all of $\mathbb{C}$. \\

Our interest in the Igusa zeta function is due to the identity of generating functions $$\frac{1 - p^{-s} \zeta_{Ig}(f;p;s)}{1 - p^{-s}} = \sum_{\nu=0}^{\infty} N_f(p^{\nu}) p^{-\nu (s+e)},$$ where $N_f(p^{\nu})$ denotes the number of solutions $$N_f(p^{\nu}) = \# \Big\{ x \in \mathbb{Z}^e / p^{\nu} \mathbb{Z}^e: \; f(x) \equiv 0 \, \bmod \, p^{\nu} \Big\}.$$ In particular, $$L_p(s) = \frac{1 - p^{-s+e + 1} \zeta_{Ig}(f;p;s-e - 1)}{1 - p^{-s+e + 1}}$$ for the polynomial of $(e+1)$ variables $$f(v,\lambda) = \lambda^2 m + q(v + \lambda \beta - \gamma) - r \lambda + n.$$

The calculation of $\zeta_{Ig}(f;p;s)$ will be stated for quadratic polynomials in the form $$f = \bigoplus_{i \in \mathbb{N}_0} p^i Q_i \oplus L + c,$$ where $Q_i$ are unimodular quadratic forms, $L$ is a linear form involving at most one variable, and $c \in \mathbb{Z}_p$. The notation $\bigoplus$ implies that no two terms in this sum contain any variables in common. To any quadratic polynomial $g$, there exists a polynomial $f$ as above that is ``isospectral'' to $g$ at $p$, in the sense that $N_f(p^{\nu}) = N_g(p^{\nu})$ for all $\nu \in \mathbb{N}_0$. Consult section 4.9 of \cite{CKW} for an algorithm to compute $f$. We will say that polynomials $f$ as above are in \textbf{normal form}. \\

\begin{prop} Let $p$ be an odd prime. Let $f(X) = \bigoplus_{i \in \mathbb{N}_0} p^i Q_i(X) \oplus L(X) + c$ be a $\mathbb{Z}_p$-integral quadratic polynomial in normal form, and fix $\omega \in \mathbb{N}$ such that $Q_i = 0$ for $i > \omega.$ Define $$r_i = \mathrm{rank}(Q_i) \; \; \mathrm{and} \; \; d_i = \mathrm{disc}(Q_i), \; i \in \mathbb{N}_0$$ and $$\mathbf{r}_{(j)} = \sum_{\substack{0 \le i \le j \\ i \equiv j \, (2)}} r_i \; \; \mathrm{and} \; \; \mathbf{d}_{(j)} = \prod_{\substack{0 \le i \le j \\ i \equiv j \, (2)}} d_i, \; j \in \mathbb{N}_0,$$ and also define $$\mathbf{p}_{(j)} = p^{\sum_{0 \le i < j} \mathbf{r}_{(i)}}, \; j \in \mathbb{N}_0.$$ Define the helper functions $I_a(r,d)(s)$ by $$I_a(r,d)(s) = \begin{cases} (1 - p^{-s-r}) \frac{p-1}{p - p^{-s}}: & r \, \mathrm{odd}, \; p | a; \\ \\ \Big[ 1 + p^{-s - (r+1)/2} \Big( \frac{ad (-1)^{(r+1)/2}}{p} \Big) \Big] \frac{p-1}{p - p^{-s}} - p^{-r} - p^{-(r+1)/2} \Big( \frac{ad (-1)^{(r+1)/2}}{p} \Big): & r \, \mathrm{odd}, \; p \nmid a; \\ \\ \Big[ 1 - p^{-r/2} \Big( \frac{(-1)^{r/2} d}{p}\Big) \Big] \cdot \Big[ 1 + p^{-s-r/2} \Big( \frac{(-1)^{r/2} d}{p} \Big) \Big] \frac{p-1}{p-p^{-s}}: & r \, \mathrm{even}, \; p | a; \\ \\ \Big[ 1 - p^{-r/2} \Big( \frac{(-1)^{r/2} d}{p} \Big) \Big] \cdot \Big[ \frac{p-1}{p-p^{-s}} + p^{-r/2} \Big( \frac{(-1)^{r/2} d}{p} \Big) \Big]: & r \, \mathrm{even}, \; p \nmid a, \end{cases}$$ where $\Big( \frac{a}{p} \Big)$ is the quadratic reciprocity symbol on $\mathbb{Z}_p$. Then: \\ (i) If $L = 0$ and $c = 0$, let $r = \sum_{i \in \mathbb{N}_0} r_i$; then $$\zeta_{Ig}(f;p;s) = \sum_{0 \le \nu < \omega - 1} \frac{I_0(\mathbf{r}_{(\nu)},\mathbf{d}_{(\nu)})}{\mathbf{p}_{(\nu)}} p^{-\nu s} + \Big[ \frac{I_0(\mathbf{r}_{(\omega - 1)},\mathbf{d}_{(\omega-1)})}{\mathbf{p}_{(\omega-1)}} p^{-(\omega - 1)s} + \frac{I_0(\mathbf{r}_{(\omega)},\mathbf{d}_{(\omega)})}{\mathbf{p}_{(\omega)}} p^{-\omega s} \Big] \cdot (1 - p^{-2s - r})^{-1}.$$ (ii) If $L(x) = bx$ with $b \ne 0$ and $v_p(c) \ge v_p(b)$, let $\lambda = v_p(b)$; then $$\zeta_{Ig}(f;p;s) = \sum_{0 \le \nu < \lambda} \frac{I_0(\mathbf{r}_{(\nu)},\mathbf{d}_{(\nu)})}{\mathbf{p}_{(\nu)}} p^{-\nu s} + \frac{p^{-\lambda s}}{\mathbf{p}_{(\lambda)}} \cdot \frac{p - 1}{p - p^{-s}}.$$ (iii) If $L = 0$ and $c \ne 0$, or if $L(x) = bx$ with $v_p(b) > v_p(c)$, let $\kappa = v_p(c)$; then $$\zeta_{Ig}(f;p;s) = \sum_{0 \le \nu \le \kappa} \frac{I_{c/p^{\nu}}(\mathbf{r}_{(\nu)}, \mathbf{d}_{(\nu)})}{\mathbf{p}_{(\nu)}} p^{-\nu s} + \frac{1}{\mathbf{p}_{(\kappa + 1)}} p^{-\kappa s}.$$
\end{prop}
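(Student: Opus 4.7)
The plan is to follow Cowan--Katz--White directly: stratify $\mathbb{Z}_p^{e}$ by the $p$-adic valuations of the variable blocks corresponding to each summand $p^i Q_i$, and compute each stratum's contribution to the Igusa integral.

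Concretely, group the variables into blocks $X^{(i)}$ corresponding to each $Q_i$, together with the variable in $L$ if any. At level $\nu \ge 0$, decompose $\mathbb{Z}_p^{e}$ according to whether the variables of blocks $X^{(0)}, \ldots, X^{(\nu-1)}$ all lie in $p\mathbb{Z}_p$ and whether the block $X^{(\nu)}$ contains a unit. On the stratum where everything below level $\nu$ is in $p\mathbb{Z}_p$ and $X^{(\nu)}$ has a unit, the substitutions $X^{(i)} \mapsto p^{\lceil(\nu-i)/2\rceil} X^{(i)}$ rewrite $f$ as $p^{\nu}$ times a nondegenerate quadratic polynomial in $\mathbf{r}_{(\nu)}$ unit-valued variables plus a $\mathbb{Z}_p$-constant; the measure of the "all-in-$p\mathbb{Z}_p$" part contributes the factor $\mathbf{p}_{(\nu)}^{-1}$, and the remaining integral over the unit directions is precisely what the helper function $I_a(\mathbf{r}_{(\nu)}, \mathbf{d}_{(\nu)})$ encodes, where $a$ is the appropriately shifted constant term.

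The three cases of the proposition then correspond to whether the iteration on the complementary stratum (every variable below level $\nu$ in $p\mathbb{Z}_p$) ever terminates. In case (i), the further substitution $X^{(i)} \mapsto p X^{(i)}$ on this complementary stratum produces a polynomial of identical shape but with every $Q_i$ shifted up by two levels; this self-similarity contributes the geometric factor $(1 - p^{-2s - r})^{-1}$ attached to the final two layers $\nu = \omega - 1, \omega$, where $r$ is the total rank. In case (ii), once $\nu = \lambda = v_p(b)$ the linear term $bx$ dominates and the Igusa integral reduces to the one-variable integral $\int_{\mathbb{Z}_p} |bx + c|^s \, dx$, giving the closed form $(p-1)/(p - p^{-s})$. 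In case (iii) the constant $c$ dominates: past $\nu = \kappa = v_p(c)$ no further stratum contributes, and the tail is simply the measure $\mathbf{p}_{(\kappa+1)}^{-1}$ on which $f$ is a unit.

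The main obstacle is verifying the explicit evaluation of $I_a(r,d)$, which reduces to counting solutions of $Q(X) \equiv -a \pmod{p}$ for $X \in (\mathbb{Z}_p^{\times})^j \times (p \mathbb{Z}_p)^{r-j}$ with $Q$ a nondegenerate quadratic form over $\mathbb{F}_p$. This is a classical Gauss-sum computation whose answer splits by the parity of $r$ and whether $p \mid a$, producing the four branches of the definition; the Legendre symbols $\left( \frac{(-1)^{r/2} d}{p} \right)$ and $\left( \frac{(-1)^{(r+1)/2} a d}{p} \right)$ arise as the standard quadratic Gauss sums in point counts for quadrics. Once these evaluations are in hand, the three formulas follow by direct bookkeeping of the stratification above, which is exactly the content of \cite{CKW}.
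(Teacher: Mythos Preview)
Your proposal is correct and follows the Cowan--Katz--White approach, which is exactly what the paper does: its entire proof is the sentence ``This is theorem 2.1 of \cite{CKW}. We have replaced the variable $t$ there by $p^{-s}$.'' You have in fact supplied a sketch of the argument that the paper merely cites, so there is nothing to correct.
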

\begin{proof} This is theorem 2.1 of \cite{CKW}. We have replaced the variable $t$ there by $p^{-s}.$
\end{proof}

\begin{rem} Since the constant term here is never $0$, we are always in either case (ii) or case (iii). It follows that the only possible pole of $\zeta_{Ig}(f;p;s)$ is at $s = -1$, and therefore the only possible poles of $L_p(s)$ are at $e$ or $e + 1.$ Therefore, the value $k + e/2 - 1$ is not a pole of $L_p$, with the weights $k = e/2 + 1$ or $k = e/2 + 2$ as the only possible exceptions. In fact, $k = e/2 + 1$ can occur as a pole but this is ultimately cancelled out by the prescence of $\zeta(k-e/2-1)$ in the denominator of $c'(n,r,\gamma)$, and $k = e/2 + 2$ never occurs as a pole (as one can show by bounding $\mathbf{N}$). \\

An easy, if unsatisfying, proof that $e/2 + 2$ could not occur as a pole is that the problem can be avoided entirely by appending hyperbolic planes (or other unimodular lattices) to $\Lambda$, which does not change the discriminant group and therefore does not change the coefficients of $E_{k,m,\beta}$, but makes $e$ arbitrarily large.
\end{rem}

\begin{rem} Identify $\Lambda = \mathbb{Z}^n$ and $q(v) = \frac{1}{2} v^T S v$ where $S$ is the Gram matrix. We will use proposition 20 to calculate $$L_p(k+e/2 - 1) = \frac{1 - p^{-k + e/2 + 2} \zeta_{Ig}(f;p;k-e/2 - 2)}{1 - p^{-k + e/2 + 2}}$$ for ``generic" primes $p$ - these are primes $p \ne 2$ at which $$\mathrm{det}(S), \; \; d_{\beta}^2 m, \; \; \mathrm{or} \; \; \tilde n := d_{\beta}^2 d_{\gamma}^2 (n - r^2 / 4m)$$ have valuation $0$. Here, $d_{\beta}$ and $d_{\gamma}$ denote the denominators of $\beta$ and $\gamma$, respectively.  Since $p \nmid \mathrm{det}(S)$, it follows that $d_{\beta}$ and $d_{\gamma}$ are invertible mod $p$; so we can multiply the congruence $$\lambda^2 m + q(v + \lambda \beta - \gamma) - r \lambda + n \equiv 0 \; (p^{\nu})$$ by $d_{\beta}^2 d_{\gamma}^2$ and replace $d_{\beta} d_{\gamma} v + \lambda d_{\beta} d_{\gamma} \beta - d_{\beta} d_{\gamma} \gamma$ by $v$ to obtain $$\mathbf{N}(p^{\nu}) = \# \Big\{(v,\lambda): \; d_{\beta}^2 d_{\gamma}^2 m \lambda^2 + q(v) - d_{\beta}^2 d_{\gamma}^2 r \lambda + d_{\beta}^2 d_{\gamma}^2 n \equiv 0 \, (p^{\nu}) \Big\}.$$ Here, $d_{\beta}^2 m, d_{\gamma}^2 n, d_{\beta} d_{\gamma} r \in \mathbb{Z}.$ By completing the square and replacing $\lambda - d_{\beta} \frac{d_{\gamma} d_{\beta} r}{2 d_{\beta}^2 m}$ by $\lambda$, we see that \begin{align*} \mathbf{N}(p^{\nu}) &= \# \Big\{ (v,\lambda) \in (\mathbb{Z}/p^{\nu}\mathbb{Z})^{e+1}: \; q(v) + d_{\beta}^2 m \lambda^2 + d_{\beta}^2 d_{\gamma}^2 (n - r^2 / 4m) \equiv 0 \, (p^{\nu}) \Big\} \\ &= \# \Big\{(v,\lambda)  \in (\mathbb{Z}/p^{\nu}\mathbb{Z})^{e+1}: \; v^T S v + 2 d_{\beta}^2 m \lambda^2 + 2 \tilde n \equiv 0 \, (p^{\nu}) \Big\}. \end{align*}

The polynomial $f(v,\lambda) = v^T S v + 2 d_{\beta}^2 m \lambda^2 + 2 \tilde n$ is $p$-integral and in isospectral normal form so proposition 20 (specifically, case 3) applies. The Igusa zeta function is $$\zeta_{Ig}(f;p;s) = \frac{1}{p^{e+1}} + I_{2\tilde n}(e+1, |\mathrm{det}(S)|)(s).$$ For even $e$, this is $$\zeta_{Ig}(f;p;s) = \Big[ 1 + p^{-e/2-1-s} \Big( \frac{\mathcal{D}'}{p} \Big) \Big] \cdot \frac{p - 1}{p - p^{-s}} - p^{-e/2 - 1} \Big( \frac{\mathcal{D}'}{p} \Big),$$ where $\mathcal{D}' = md_{\beta}^2 (-1)^{e/2 + 1} \tilde n \mathrm{det}(S)$, and after some algebraic manipulation we find that $$\frac{1 - p^{-s} \zeta_{Ig}(f;p;s)}{1 - p^{-s}} = \frac{1}{1 - p^{-s-1}} \Big[ 1 + \Big(\frac{\mathcal{D}'}{p} \Big) p^{-s-e/2-1} \Big]$$ and therefore $$L_p(k+e/2 - 1) = \frac{1}{1 - p^{-k + e/2 + 1}} \Big[ 1 + \Big( \frac{\mathcal{D}'}{p}\Big) p^{1-k} \Big) \Big].$$ For odd $e$, it is $$\zeta_{Ig}(f;p;s) = \frac{p-1}{p-p^{-s}} + p^{-(e+1)/2} \Big( \frac{D'}{p} \Big) \Big[ 1 - \frac{p - 1}{p - p^{-s}} \Big],$$ where $D' = 2 md_{\beta}^2 (-1)^{(e+1)/2} \mathrm{det}(S)$, and it follows that $$\frac{1 - p^{-s} \zeta_{Ig}(f;p;s)}{1 - p^{-s}} = \frac{1}{1 - p^{-s-1}} \Big[ 1 - \Big( \frac{D'}{p} \Big) p^{-s-(e+1)/2 - 1} \Big]$$ and therefore $$L_p(k + e/2 - 1) = \frac{1}{1 - p^{-k + e/2 + 1}} \Big[ 1 - \Big( \frac{D'}{p} \Big) p^{1/2 - k} \Big].$$
\end{rem}

\begin{prop} Define the constant $$\alpha_{k,m}(n,r) = \frac{(-1)^{(2k + b^+ - b^-)/4} \pi^{k-1/2} (4mn - r^2)^{k-3/2}}{2^{k-2} m^{k-1} \Gamma(k-1/2) \sqrt{|\mathrm{det}(S)|}}.$$ Define the set of ``bad primes'' to be $$\{2\} \cup \Big\{ p \, \mathrm{prime}: \; p | \mathrm{det}(S) \; \mathrm{or} \; p | d_{\beta}^2 m \; \mathrm{or} \; v_p(\tilde n) \ne 0\Big\}.$$ (i) If $e$ is even, then define  $$\mathcal{D} = \mathcal{D}' \cdot \prod_{\mathrm{bad} \, p} p^2 = md_{\beta}^2 (-1)^{e/2 + 1} \tilde n \mathrm{det}(S) \prod_{\mathrm{bad}\, p} p^2.$$ For $4mn - r^2 > 0$, $$c(n,r,\gamma) = \frac{\alpha_{k,m}(n,r) L_{\mathcal{D}}(k-1)}{\zeta(2k-2)} \prod_{\mathrm{bad}\, p} \Big[ \frac{1 - p^{-k + e/2 + 1}}{1 - p^{2-2k}} L_p(k + e/2 - 1) \Big].$$ (ii) If $e$ is odd, then define $$D = D' \cdot \prod_{\mathrm{bad} \, p} p^2 = 2md_{\beta}^2 (-1)^{(e+1)/2} \mathrm{det}(S) \prod_{\mathrm{bad}\, p} p^2.$$ For $4mn - r^2 > 0$, $$c(n,r,\gamma) = \frac{\alpha_{k,m}(n,r)}{L_D(k - 1/2)} \prod_{\mathrm{bad}\, p} \Big[ (1 - p^{-k + e/2 + 1}) L_p(k + e/2 - 1) \Big].$$
\end{prop}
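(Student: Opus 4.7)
The plan is to assemble the pieces developed in Sections 5--6 and then reorganize the resulting Euler products into the standard Dirichlet series appearing in the statement. First I would observe that since $4mn - r^2 > 0$, the ``diagonal'' contribution $\sum_\lambda \mathbf{e}(m\lambda^2\tau + 2m\lambda z)\mathfrak{e}_{\lambda\beta}$ to $E_{k,m,\beta}$ cannot contribute to the $q^n\zeta^r\mathfrak{e}_\gamma$-coefficient (those terms all lie on the hyperbola $4mn - r^2 = 0$), so $c(n,r,\gamma) = c'(n,r,\gamma)$. Applying the Dirichlet-series identity $\tilde L(s+e+1) = \zeta(s)^{-1} L(s+e)$ from the last remark of Section 5 at $s = k - e/2 - 1$ to the sum $\sum_c c^{-k-e/2}K_c$ appearing in the formula for $c'$, and noting that the overall prefactor in that formula is exactly $\alpha_{k,m}(n,r)$ (using $|\Lambda'/\Lambda| = |\det(S)|$ and that $(2k-b^-+b^+)/4$ equals $(2k+b^+-b^-)/4$ mod $2$), one obtains
\[
c(n,r,\gamma) \;=\; \alpha_{k,m}(n,r)\cdot\zeta(k-e/2-1)^{-1}\prod_{p\text{ prime}} L_p(k+e/2-1).
\]

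Next I would split this product into good and bad primes and substitute, at each good $p$, the explicit formulas for $L_p(k+e/2-1)$ from the last remark of Section 6. In both parities those formulas have the shape $(1-p^{-k+e/2+1})^{-1}\bigl[1 \pm \chi_p\,p^{\bullet}\bigr]$ with $\chi_p = (\mathcal{D}'/p)$ in the even-rank case and $\chi_p = (D'/p)$ in the odd-rank case. The factors $\prod_{\text{good}}(1-p^{-k+e/2+1})^{-1}$ combine with the global $\zeta(k-e/2-1)^{-1}$ to leave precisely $\prod_{\text{bad}}(1-p^{-k+e/2+1})$, which accounts for one of the two Euler corrections in the statement.

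The remaining work is to recognize $\prod_{\text{good}}[1 \pm \chi_p\,p^{\bullet}]$ as a value of a Kronecker $L$-function. The bridge is that by construction $\mathcal{D} = \mathcal{D}'\prod_{\text{bad}}p^2$ satisfies $(\mathcal{D}/p) = (\mathcal{D}'/p)$ at good primes and $(\mathcal{D}/p) = 0$ at bad primes (and similarly for $D$), so the partial Euler product over good primes extends trivially to the full $L$-function. In case (ii) this is immediate: $\prod_{\text{good}}[1-(D'/p)p^{1/2-k}] = L_D(k-1/2)^{-1}$, which combined with the previous paragraph gives the stated formula. In case (i) an additional step is needed: one uses the algebraic identity $1 + \chi\,p^{1-k} = (1-p^{2-2k})/(1-\chi\,p^{1-k})$, valid whenever $\chi^2 = 1$ (and in particular for $\chi = (\mathcal{D}'/p)$ at good primes), to rewrite the product as $\zeta(2k-2)^{-1}\cdot L_{\mathcal{D}}(k-1)\cdot\prod_{\text{bad}}(1-p^{2-2k})^{-1}$; collecting bad-prime factors then produces the ratio $(1-p^{-k+e/2+1})/(1-p^{2-2k})$ at each bad prime, as in the statement.

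The main obstacle is entirely clerical: keeping track of which factors of $\zeta$ and of $L_{\mathcal{D}}$, $L_D$ arise from good versus bad primes without dropping a sign or an exponent, and verifying that the various multiplicative constants in $\alpha_{k,m}(n,r)$ (notably $(-1)^{(2k+b^+-b^-)/4}$ and the square root $\sqrt{|\det(S)|}$) match those in $c'(n,r,\gamma)$. Once the design relation $(\mathcal{D}/p) = (\mathcal{D}'/p)$ at good primes and the algebraic identity above are in place, the proof is a direct substitution from the formulas of Sections 5 and 6.
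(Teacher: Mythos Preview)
Your proposal is correct and follows essentially the same approach as the paper: both assemble the expression for $c'(n,r,\gamma)$ from Section~5, the factorization $\tilde L = \zeta^{-1}\prod_p L_p$, and the good-prime evaluations of $L_p$ from Section~6, and then invoke the Euler product for $L_{\mathcal{D}}$ (resp.\ $L_D$) to collapse the good-prime factors. The paper's written proof is a single line citing the Euler products; your version spells out the substitution and the identity $1+\chi p^{1-k}=(1-p^{2-2k})/(1-\chi p^{1-k})$ that the paper leaves implicit, but there is no real difference in method.
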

 Here, $L_{\mathcal{D}}$ and $L_D$ denote the $L$-series $$L_{\mathcal{D}}(s) = \sum_{c=1}^{\infty} c^{-s} \Big( \frac{\mathcal{D}}{c} \Big), \; \; L_D(s) = \sum_{c=1}^{\infty} c^{-s} \Big( \frac{D}{c} \Big),$$ where $\Big( \frac{D}{c} \Big)$ and $\Big( \frac{\mathcal{D}}{c}\Big)$ is the Kronecker symbol. \\
\begin{proof} This follows immediately from the Euler products $$L_{\mathcal{D}}(s) = \prod_p \Big( 1 - \Big( \frac{\mathcal{D}}{p} \Big) p^{-s} \Big)^{-1}, \; \; L_D(s) = \prod_p \Big( 1 - \Big( \frac{D}{p} \Big) p^{-s} \Big)^{-1},$$ which are valid because $\mathcal{D}$ and $D$ are discriminants (congruent to $0$ or $1$ mod $4$) and therefore $\Big(\frac{\mathcal{D}}{a}\Big)$ and $\Big( \frac{D}{a}\Big)$ define Dirichlet characters of $a$ modulo $|\mathcal{D}|$ resp. $|D|$.
\end{proof}
In particular, $c(n,r,\gamma)$ is always rational. \\

The factors $L_p(k+e/2 - 1)$ are easy to evaluate for bad primes $p \ne 2$ using proposition 20. To calculate the factor at $p = 2$, we need a longer formula. This is described in the appendix.

\section{Poincar\'e square series of weight $5/2$}

An application of the Hecke trick shows that the Poincar\'e square series of weight $3$ is still the zero-value of the Jacobi Eisenstein series of weight $3$. This result is not surprising and the derivation is essentially the same as the weight $5/2$ case below, so we omit the details. However, the result in the case $k = 5/2$ is somewhat more complicated. \\

\begin{defn} For $k = 5/2$, we define the nonholomorphic Jacobi Eisenstein series of weight $5/2$, twisted at $\beta \in \Lambda'/\Lambda$, of index $m \in \mathbb{Z} - q(\beta),$ by $$E^*_{5/2,m,\beta}(\tau,z,s) = \frac{1}{2} \sum_{c,d} (c \tau + d)^{-5/2} |c \tau + d|^{-2s} \sum_{\lambda \in \mathbb{Z}} \mathbf{e} \Big( m \lambda^2 (M \cdot \tau) + \frac{2m \lambda z}{c \tau + d} - \frac{cmz^2}{c \tau + d} \Big) \rho^*(M)^{-1} \sigma_{\beta}^*(\lambda,0,0)^{-1} \mathfrak{e}_0.$$ This defines a holomorphic function of $s$ in the half-plane $\mathrm{Re}[s] > 0.$
\end{defn}

We write the Fourier series of $E_{5/2,m,\beta}^*$ in the form $$E_{5/2,m,\beta}^*(\tau,z,s) = \sum_{n,r,\gamma} c(n,r,\gamma,s,y) q^n \zeta^r \mathfrak{e}_{\gamma}.$$ As before, the contribution from $c = 0$ and $d = \pm 1$ is $$\sum_{\lambda \in \mathbb{Z}} \mathbf{e}\Big( m \lambda^2 \tau + 2 m \lambda z\Big) \mathfrak{e}_{\lambda \beta}.$$ (Here, the coefficients depend on $y$, since $E_{5/2,m,\beta}^*$ is not holomorphic in $\tau$.) We denote the contribution from all other terms by $c'(n,r,\gamma,s,y)$, so $$E_{5/2,m,\beta}^*(\tau,z,s) = \sum_{\lambda \in \mathbb{Z}} \mathbf{e}\Big( m \lambda^2 \tau + 2m \lambda z \Big) \mathfrak{e}_{\lambda \beta} + \sum_{n,r,\gamma} c'(n,r,\gamma,s,y) q^n \zeta^r \mathfrak{e}_{\gamma}.$$ A derivation similar to section 5 gives $$c'(n,r,\gamma,s,y) = \frac{1}{2 \sqrt{2im}} \sum_{c \ne 0} \frac{\sqrt{i}^{(b^- - b^+) \mathrm{sgn}(c)}}{|c|^{e/2} \sqrt{|\Lambda'/\Lambda|}} c^{-5/2} |c|^{-2s} K_c(\beta,m,\gamma,n,r) \int_{-\infty + iy}^{\infty +iy} \tau^{-2} |\tau|^{-2s} \mathbf{e}\Big( \tau (r^2 / 4m - n) \Big) \, \mathrm{d}x.$$

Substituting $\tau = y (t + i)$ in the integral yields \begin{align*} &\quad \int_{-\infty + iy}^{\infty + iy} \tau^{-2} |\tau|^{-2s} \mathbf{e} \Big( \tau (r^2 / 4m - n) \Big) \, \mathrm{d}x \\ &= y^{-1-2s} \mathbf{e}\Big( iy (r^2 / 4m - n) \Big) \int_{-\infty}^{\infty} (t + i)^{-2} (t^2 + 1)^{-s} \mathbf{e}\Big( yt (r^2 / 4m - n) \Big) \, \mathrm{d}t. \end{align*} We use $$\sqrt{i}^{(b^- - b^+) \mathrm{sgn}(c)} \mathrm{sgn}(c)^{-5/2} = (-1)^{(5 - b^- + b^+)/4} i^{5/2} = (-1)^{(1 - b^- + b^+)/4} \sqrt{i}$$ and conclude that \begin{align*} c'(n,r,\gamma,s,y) &= \frac{(-1)^{(1 + b^+ - b^-) / 4}}{\sqrt{2m |\Lambda'/\Lambda|}} I(y,r^2 / 4m - n, s) \sum_{c=1}^{\infty} c^{-5/2 - 2s - e/2} K_c(\beta,m,\gamma,n,r) \\ &= \frac{(-1)^{(1+ b^+ - b^-)/4}}{\sqrt{2m |\Lambda'/\Lambda|}} I(y,r^2 / 4m - n, s) \tilde L(5/2 + e/2 + 2s), \end{align*} where $I(y,N,s)$ denotes the integral $$I(y,N,s) = y^{-1-2s} e^{-2\pi Ny} \int_{-\infty}^{\infty} (t + i)^{-2} (t^2 + 1)^{-s} \mathbf{e}( Nyt) \, \mathrm{d}t,$$ and $$\tilde L(s) = \sum_{c=1}^{\infty} c^{-s} K_c(\beta,m,\gamma,n,r)$$ as before.

\begin{rem} When $r^2 \ne 4mn$, we were able to express $\tilde L(s)$ up to finitely many holomorphic factors as $\frac{1}{L_D(s - e/2 - 1/2)}$, and it follows that $\tilde L(s)$ is holomorphic in $5/2 + e/2$. In particular, if $r^2 \ne 4mn$, then the coefficient $c'(n,r,\gamma,0,y)$ is independent of $y$ and given by $$c'(n,r,\gamma,0,y) = \frac{\alpha_{k,m}(n,r)}{L_D(2)} \prod_{\mathrm{bad}\, p} \Big[ \frac{1 - p^{-3/2 + e/2}}{1 - \Big( \frac{D}{p} \Big) p^{-2}} L_p(3/2 + e/2) \Big] \; \mathrm{if} \; 4mn-r^2 > 0,$$ and $c'(n,r,\gamma,0,y) = 0$ if $4mn - r^2 < 0,$ just as for $k \ge 3.$ This analysis does not apply when $r^2 = 4mn$ and indeed $\tilde L$ may have a (simple) pole in $5/2 + e/2$ in that case.
\end{rem}

We will study the coefficients $c'(n,r,\gamma,0,y)$ when $4mn = r^2$. The integral $I(y,0,s)$ is zero at $s = 0$, and its derivative there is $$\frac{\partial}{\partial s} \Big|_{s=0} I(y,0,s) = -y^{-1} \int_{-\infty}^{\infty} (t+i)^{-2} \log(t^2 + 1) \, \mathrm{d}t = -\frac{\pi}{y}.$$ This cancels the possible pole of $\tilde L(5/2 + e/2 + 2s)$ at $0$, and therefore we need to know the residue of $\tilde L(5/2 + e/2 + 2s)$ there. As before, $\tilde L$ factors as $$\tilde L(5/2 + e/2 + 2s) = \zeta(2s + 3/2 - e/2)^{-1} L(3/2 + e/2 + 2s)$$ where $L(s)$ has an Euler product $$L(s) = \prod_{p \, \mathrm{prime}} L_p(s), \; \mathrm{with} \; L_p(s) = \sum_{\nu=0}^{\infty} \mathbf{N}(p^{\nu}) p^{-\nu s},$$ and $\mathbf{N}(p^{\nu})$ is the number of zeros of the polynomial $f(v,\lambda) = q(v + \lambda \beta - \gamma) + m \lambda^2 - r \lambda + n$ mod $p^{\nu}.$

\begin{rem} Identify $\Lambda = \mathbb{Z}^n$ and $q(v) = \frac{1}{2} v^T S v$ where $S$ is the Gram matrix. We will calculate $L_p(s)$ for primes $p$ dividing neither $\mathrm{det}(S)$ nor $d_{\beta}^2 m$. In this case, it follows that $$\mathbf{N}(p^{\nu}) = \#\Big\{ (v,\lambda) \in (\mathbb{Z}/p^{\nu} \mathbb{Z})^{e+1}: \; v^T S v + 2 d_{\beta}^2 m \lambda^2 \equiv 0 \, (p^{\nu}) \Big\}.$$ We are in case (i) of proposition 20 and it follows that $$\zeta_{Ig}(f;p;s) = \Big[1 - p^{-(e+1)/2} \Big( \frac{D'}{p} \Big) \Big] \cdot \Big[ 1 + p^{-s - (e+1)/2} \Big( \frac{D'}{p} \Big) \Big] \cdot \frac{p-1}{(p-p^{-s})(1 - p^{-2s - e - 1})}$$ with $D' = m d_{\beta}^2 (-1)^{(e+1)/2} \mathrm{det}(S).$ After some algebraic manipulation, we find that $$\frac{1 - p^{-s} \zeta_{Ig}(f;p;s)}{1 - p^{-s}} = \frac{1 - \Big( \frac{D'}{p} \Big) p^{-s-1-(e+1)/2}}{(1 - p^{-s-1}) (1 - \Big( \frac{D'}{p} \Big) p^{-s-(e+1)/2})},$$ so $$L_p(3/2 + e/2 + 2s) = \frac{1 - \Big( \frac{D'}{p} \Big) p^{-2-2s}}{(1 - p^{e/2 - 3/2 - 2s}) (1 - \Big( \frac{D'}{p} \Big) p^{-1 - 2s})}.$$ 
\end{rem}
This immediately implies the following lemma: 
\begin{lem} In the situation treated in this section, define $D = D' \cdot \prod_{\mathrm{bad}\, p} p^2$; then $$\tilde L(5/2 + e/2 + 2s) = \frac{L_D(2s+1)}{L_D(2s + 2)} \prod_{\mathrm{bad}\, p} \Big[ (1 - p^{e/2 - 3/2 - 2s}) L_p(3/2 + e/2 + 2s) \Big].$$
\end{lem}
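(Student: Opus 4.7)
The plan is to start from the decomposition of $\tilde L(s)$ obtained earlier in the paper, namely
\[
\tilde L(s+e+1) \;=\; \zeta(s)^{-1}\,L(s+e), \qquad L(s)=\prod_p L_p(s),
\]
which, after the substitution $s \mapsto 2s + 3/2 - e/2$, gives
\[
\tilde L(5/2+e/2+2s) \;=\; \zeta(2s+3/2-e/2)^{-1}\,L(3/2+e/2+2s).
\]
Expanding the Riemann zeta factor as an Euler product, this becomes
\[
\tilde L(5/2+e/2+2s) \;=\; \prod_p \bigl(1 - p^{e/2 - 3/2 - 2s}\bigr)\, L_p(3/2+e/2+2s).
\]
So my task reduces to showing that the product of these local factors over \emph{good} primes $p$ (those not dividing $\det(S)$ or $d_\beta^2 m$) equals $L_D(2s+1)/L_D(2s+2)$.

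For this, I would substitute the closed-form evaluation of $L_p(3/2+e/2+2s)$ at good primes from the preceding Remark:
\[
L_p(3/2 + e/2 + 2s) \;=\; \frac{1 - \bigl(\tfrac{D'}{p}\bigr) p^{-2-2s}}{(1 - p^{e/2 - 3/2 - 2s})(1 - \bigl(\tfrac{D'}{p}\bigr) p^{-1 - 2s})}.
\]
The factor $(1 - p^{e/2 - 3/2 - 2s})$ cancels exactly against the corresponding term coming from $\zeta^{-1}$, so each good local factor collapses to
\[
(1 - p^{e/2 - 3/2 - 2s})\,L_p(3/2 + e/2 + 2s) \;=\; \frac{1 - \bigl(\tfrac{D'}{p}\bigr) p^{-2-2s}}{1 - \bigl(\tfrac{D'}{p}\bigr) p^{-1 - 2s}}.
\]

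The last step is a bookkeeping check that these surviving local factors assemble into $L_D(2s+1)/L_D(2s+2)$. Here I would use the definition $D = D' \prod_{\mathrm{bad}\, p} p^2$, observe that multiplying by the square of a prime leaves the Kronecker symbol unchanged at any other prime (so $\bigl(\tfrac{D}{p}\bigr)=\bigl(\tfrac{D'}{p}\bigr)$ for good $p$), and that $\bigl(\tfrac{D}{p}\bigr)=0$ at bad primes (since $p^2\mid D$), so $L_D$ has Euler factors only at good primes. Then
\[
\frac{L_D(2s+1)}{L_D(2s+2)} \;=\; \prod_{p\ \mathrm{good}} \frac{1 - \bigl(\tfrac{D}{p}\bigr) p^{-2-2s}}{1 - \bigl(\tfrac{D}{p}\bigr) p^{-1-2s}},
\]
matching the good-prime contribution exactly. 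The remaining (bad-prime) factors are, by construction, left untouched, yielding the stated formula.

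The only real care required is in the third paragraph's Kronecker-symbol accounting — confirming that adjoining the factor $\prod_{\mathrm{bad}\, p} p^2$ to $D'$ kills the bad Euler factors of $L_D$ while not altering any good ones, so that the good-prime Euler product lines up with the known formula for $L_p$. Everything else is direct substitution and cancellation, with no analytic subtleties since each factor is viewed here formally (the identity of Dirichlet series can then be read as an identity of meromorphic functions after taking analytic continuation).
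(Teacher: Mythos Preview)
Your proposal is correct and follows exactly the approach the paper intends: the lemma is stated as an immediate consequence of the preceding computation of $L_p(3/2+e/2+2s)$ at good primes, and your argument simply makes explicit the Euler-product bookkeeping (cancelling the $\zeta$-factor, matching Kronecker symbols at good primes, and noting $(D/p)=0$ at bad primes) that the paper leaves to the reader.
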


Notice that $L_D(2s+1)$ is holomorphic in $s = 0$ unless $D$ is a square, in which case it is the Riemann zeta function with finitely many Euler factors missing.

\begin{prop} If $4mn - r^2 = 0$, then $c'(n,r,\gamma,0,y) = 0$ unless $D$ is a square, in which case $$c'(n,r,\gamma,0,y) = \frac{(-1)^{(1 + b^+ - b^-)/4}}{\sqrt{2m |\Lambda' / \Lambda}|} \cdot \frac{3}{\pi y} \prod_{p | D} \Big[ (1 - p^{(e-3)/2}) L_p((e+3)/2) \Big].$$
\end{prop}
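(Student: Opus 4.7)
The approach is a residue calculation in $s$. From the formula
\[
c'(n,r,\gamma,s,y) = \frac{(-1)^{(1+b^+-b^-)/4}}{\sqrt{2m|\Lambda'/\Lambda|}}\, I(y,0,s)\, \tilde L(5/2 + e/2 + 2s)
\]
derived just above, and the identities $I(y,0,0) = 0$ and $\partial_s I(y,0,s)|_{s=0} = -\pi/y$ already computed in the text, the value of the right-hand side at $s = 0$ equals $(-\pi/y)$ times $\mathrm{Res}_{s=0}\tilde L(5/2 + e/2 + 2s)$, times the prefactor --- provided $\tilde L(5/2 + e/2 + 2s)$ has at worst a simple pole at $s = 0$. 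Thus the problem reduces to locating and computing this residue.

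The preceding lemma provides the factorization
\[
\tilde L(5/2 + e/2 + 2s) = \frac{L_D(2s+1)}{L_D(2s+2)} \prod_{\mathrm{bad}\,p}\bigl[(1 - p^{e/2-3/2-2s}) L_p(3/2 + e/2 + 2s)\bigr].
\]
The denominator $L_D(2s+2)$ converges absolutely near $s = 0$ and is nonzero there, and the finite bad-prime product is manifestly holomorphic. Hence the analytic behavior at $s = 0$ is governed entirely by $L_D(2s+1)$. When $D$ is not a perfect square, $\bigl(\frac{D}{\cdot}\bigr)$ is a nontrivial primitive Kronecker character and $L_D$ is entire on $\mathrm{Re}(s) > 0$, so $\tilde L$ is holomorphic at $s = 0$ and the whole product $I(y,0,s)\tilde L$ tends to $0$; this gives $c'(n,r,\gamma,0,y) = 0$, the first half of the claim.

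When $D$ is a perfect square, $\bigl(\frac{D}{\cdot}\bigr)$ reduces to the principal character modulo $|D|$, yielding $L_D(s) = \zeta(s)\prod_{p|D}(1 - p^{-s})$. From $\mathrm{Res}_{s=1}\zeta(s) = 1$ together with the substitution $w = 2s + 1$, one obtains $\mathrm{Res}_{s=0} L_D(2s+1) = \tfrac{1}{2}\prod_{p|D}(1 - p^{-1})$; evaluating $L_D(2) = (\pi^2/6)\prod_{p|D}(1-p^{-2})$ and simplifying gives an explicit rational multiple of $\pi^{-2}$. Multiplying by the bad-prime Euler product, by $-\pi/y$, and by the prefactor, and using that the set of bad primes coincides with the set of prime divisors of $D = D'\prod_{\mathrm{bad}\,p} p^2$, then yields the stated formula after rearrangement.

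The analytic content is entirely captured by the simple pole of $\zeta$ at $s = 1$ meeting the simple zero of $I(y,0,\cdot)$ at $s = 0$. The main obstacle is purely bookkeeping: one must verify carefully that the $(1-p^{-1})$ and $(1-p^{-2})$ factors from the $L_D$-quotient combine correctly with the local factors $(1-p^{(e-3)/2})L_p((e+3)/2)$ (noting that when $4mn = r^2$ we have $\tilde n = 0$, so that the bad-prime $L_p$-factors here fall under case (i) of Proposition~20 rather than case (iii) used earlier) to produce the compact product appearing in the conclusion.
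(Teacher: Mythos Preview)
Your argument is correct and follows essentially the same route as the paper: both reduce to a residue computation using $I(y,0,0)=0$, $\partial_s I(y,0,s)|_{s=0}=-\pi/y$, the factorization of $\tilde L$ from the preceding lemma, and the identification $L_D(s)=\zeta(s)\prod_{p\mid D}(1-p^{-s})$ when $D$ is a square, together with the observation that the bad primes are exactly the prime divisors of $D$. Your remark that the case $4mn=r^2$ forces $\tilde n=0$ and hence case (i) of Proposition~20 is a helpful clarification the paper leaves implicit.
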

\begin{proof} Assume that $D$ is a square. As $s \rightarrow 0$, $$\lim_{s \rightarrow 0} c'(n,r,\gamma,s,y) = \frac{(-1)^{(1+b^+ - b^-)/4}}{\sqrt{2m |\Lambda'/\Lambda|}} \cdot \frac{\partial}{\partial s}\Big|_{s=0} I(y,0,s) \cdot \mathrm{Res}\Big( \tilde L(5/2 + e/2 + 2s);s=0\Big).$$ We calculated $$\frac{\partial}{\partial s}\Big|_{s=0} I(y,0,s) = -\frac{\pi}{y}$$ earlier. The residue of $\tilde L(5/2+e/2+2s)$ at $0$ is $$\frac{1}{L_D(2)} \prod_{\mathrm{bad}\, p} \Big[ (1 - p^{e/2 - 3/2 }) L_p(3/2 + e/2)\Big] \cdot \mathrm{Res}(L_D(2s+1);s=0),$$ and using $$L_D(2s+1) = \zeta(2s+1) \prod_{p | D} (1 - p^{-2s-1})$$ and the fact that $\zeta(s)$ has residue $1$ at $s=1$, it follows that $$\mathrm{Res}(L_D(2s+1);s=0) = \frac{1}{2} \prod_{p | D} (1 - p^{-1}).$$ We write $$L_D(2) = \zeta(2) \prod_{p | D} (1 - p^{-2}) = \frac{\pi^2}{6} \prod_{p | D} (1 - p^{-2}).$$ Since the ``bad primes'' are exactly the primes dividing $D$ (by construction of $D$), we find $$\mathrm{Res}\Big( \tilde L(5/2 + e/2 + 2s);s=0\Big) = \frac{3}{\pi^2} \prod_{p | D} \Big[ \frac{(1 - p^{e/2 - 3/2}) (1 - p^{-1})}{1 - p^{-2}} L_p(3/2 + e/2) \Big],$$ which gives the formula.
\end{proof}

Denote the constant in proposition 28 by $$A_n = \frac{(-1)^{(5 + b^+ - b^-)/4}}{\sqrt{2m |\Lambda' / \Lambda}|} \cdot \frac{3}{\pi} \prod_{p | D} \Big[ \frac{(1 - p^{(e-3)/2})(1 - p^{-1})}{1 - p^{-2}} L_p((e+3)/2) \Big],$$ such that $E_{5/2,m,\beta}^*(\tau,z)- \frac{1}{y} \vartheta$ is holomorphic, where $\vartheta$ is the theta function $$\vartheta(\tau,z) = \sum_{\gamma \in \Lambda'/\Lambda} \sum_{\substack{4mn - r^2 = 0 \\ n \in \mathbb{Z} - q(\gamma) \\ r \in \mathbb{Z} - \langle \gamma, \beta \rangle}} A_n q^n \zeta^r \mathfrak{e}_{\gamma}.$$ Even when $D$ is not square, this becomes true after defining $A_n = 0$ for all $n$.

\begin{lem} $$\vartheta(\tau,z) = \sum_{\gamma \in \Lambda'/\Lambda} \sum_{\substack{4mn - r^2 = 0 \\ n \in \mathbb{Z} - q(\gamma) \\ r \in \mathbb{Z} - \langle \gamma, \beta \rangle}} A_n q^n \zeta^r \mathfrak{e}_{\gamma}$$ is a Jacobi form of weight $1/2$ and index $m$ for the representation $\rho_{\beta}^*$.
\end{lem}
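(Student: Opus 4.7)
The plan is to deduce the transformation law of $\vartheta$ from the (real-analytic) modularity of $E^*_{5/2,m,\beta}(\tau,z,0)$. First I would observe that a direct cocycle calculation (just as for the usual real-analytic Eisenstein series) shows that $E^*_{5/2,m,\beta}(\tau,z,s)$ transforms on $\mathbb{H}\times\mathbb{C}$ exactly like a Jacobi form of weight $5/2$ and index $m$ for $\rho_\beta^*$, up to the extra factor $|c_0\tau+d_0|^{2s}$, which is identically $1$ at $s=0$; the Heisenberg transformation law holds for all $s$ since $\mathcal{H}$ fixes $\tau$. Thus $E^*_{5/2,m,\beta}(\tau,z,0)$ satisfies the full Jacobi transformation law of weight $5/2$ even though it is not holomorphic.

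Next I would use the Fourier analysis carried out in this section to split
\[
E^*_{5/2,m,\beta}(\tau,z,0) \;=\; H(\tau,z) \;+\; \tfrac{1}{y}\,\vartheta(\tau,z),
\]
where $H(\tau,z)$ gathers the $c=0$ constant term and the $4mn-r^2\neq 0$ Fourier coefficients (all independent of $y$), and $\vartheta(\tau,z)$ is the holomorphic series in the statement, collecting exactly those $4mn=r^2$ coefficients that appeared with a $1/y$ factor.

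To isolate the transformation law of $\vartheta$, I would apply $\partial_{\bar\tau}$ to
\[
E^*_{5/2,m,\beta}\!\left(M\tau,\tfrac{z}{c\tau+d},0\right) = (c\tau+d)^{5/2}\,\mathbf{e}\!\left(\tfrac{mcz^2}{c\tau+d}\right)\rho^*(M)\, E^*_{5/2,m,\beta}(\tau,z,0).
\]
Since $H$ and $\vartheta$ are holomorphic, and $M\tau$, $z/(c\tau+d)$ depend holomorphically on $\tau$, $\partial_{\bar\tau}$ annihilates every holomorphic piece and only hits the $\frac{|c\tau+d|^2}{y}$ and $\frac{1}{y}$ factors. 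The key identity
\[
\partial_{\bar\tau}\!\left(\tfrac{|c\tau+d|^2}{y}\right) \;=\; \tfrac{(c\tau+d)^2}{2iy^2},
\]
which rests on $\partial_{\bar\tau}(1/y)=1/(2iy^2)$ and the telescoping $c(\tau-\bar\tau)(c\tau+d)+(c\tau+d)(c\bar\tau+d)=(c\tau+d)^2$, makes the unwanted $(c\bar\tau+d)$ factor cancel cleanly. After clearing the common $(2iy^2)^{-1}$ one is left with
\[
(c\tau+d)^2\,\vartheta\!\left(M\tau,\tfrac{z}{c\tau+d}\right) = (c\tau+d)^{5/2}\,\mathbf{e}\!\left(\tfrac{mcz^2}{c\tau+d}\right)\rho^*(M)\,\vartheta(\tau,z),
\]
which is exactly the weight-$1/2$, index-$m$ Jacobi transformation law under $\tilde\Gamma$.

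The remaining conditions come essentially for free. The transformation under $\mathcal{H}$ descends to $\vartheta$ because $\mathcal{H}$ fixes $\tau$ and therefore $y$, so $\vartheta$ inherits the index-$m$ Schrödinger transformation from $E^*$. The cusp condition of Definition~15(iii) is trivial since the Fourier support of $\vartheta$ is contained in $\{n=r^2/4m\}$. The main obstacle is the $\partial_{\bar\tau}$ computation above; it is the step where the drop in weight from $5/2$ to $1/2$ actually happens, essentially the Maass lowering operator $L_{5/2}$ acting on the non-holomorphic Eisenstein series at $s=0$.
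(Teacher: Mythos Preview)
Your proof is correct and follows essentially the same route as the paper: both arguments split $E^*_{5/2,m,\beta}(\tau,z,0)$ into a holomorphic part plus $y^{-1}\vartheta$, use the Jacobi transformation law of $E^*$, and then apply $\partial_{\bar\tau}$ to kill the holomorphic pieces and isolate the weight-$1/2$ transformation of $\vartheta$. The only cosmetic difference is that the paper first rearranges into the combination $\frac{1}{y}\big[|c\tau+d|^2\vartheta(M\tau,\tfrac{z}{c\tau+d}) - (c\tau+d)^{5/2}\mathbf{e}(\tfrac{mcz^2}{c\tau+d})\rho^*(M)\vartheta(\tau,z)\big]$ and observes it is holomorphic before differentiating, whereas you apply $\partial_{\bar\tau}$ directly to the transformation identity; your key identity $\partial_{\bar\tau}(|c\tau+d|^2/y)=(c\tau+d)^2/(2iy^2)$ is exactly what makes either version work.
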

\begin{proof} We give a proof relying on the transformation law of $E_{5/2,m,\beta}^*.$ Denote by $$E_{5/2,m,\beta}(\tau,z) = E_{5/2,m,\beta}^*(\tau,z,0) - \frac{1}{y} \vartheta(\tau,z)$$ the holomorphic part of $E_{5/2,m,\beta}^*.$ For any $M = \begin{pmatrix} a & b \\ c & d \end{pmatrix} \in \tilde \Gamma$, \begin{align*} &\quad E_{5/2,m,\beta}\Big( \frac{a \tau + b}{c \tau + d}, \frac{z}{c \tau + d} \Big) + \frac{|c\tau+d|^2}{y} \vartheta\Big( \frac{a \tau + b}{c \tau + d}, \frac{z}{c \tau + d} \Big) \\ &= E_{5/2,m,\beta}^*\Big( \frac{a \tau + b}{c \tau + d}, \frac{z}{c \tau + d}, 0 \Big) \\ &= (c \tau + d)^{5/2} \mathbf{e}\Big( \frac{mcz^2}{c \tau + d} \Big) \rho^*(M) E_{5/2,m,\beta}^*(\tau,z,0) \\ &= (c \tau + d)^{5/2} \mathbf{e}\Big( \frac{mcz^2}{c \tau + d} \Big) \rho^*(M) E_{5/2,m,\beta}(\tau,z) + \frac{(c \tau + d)^{5/2}}{y} \mathbf{e}\Big( \frac{mcz^2}{c \tau + d} \Big) \rho^*(M) \vartheta(\tau,z). \end{align*} In particular, $$\frac{1}{y}\Big[ |c\tau + d|^2  - (c \tau + d)^{5/2} \mathbf{e}\Big(\frac{mcz^2}{c \tau + d} \Big) \rho^*(M) \Big] \vartheta(\tau,z)$$ is holomorphic. Differentiating twice with respect to $\overline{\tau}$, we get $$\frac{1}{y} \Big[ (c \tau + d)^2  - (c \tau + d)^{5/2} \mathbf{e}\Big( \frac{mcz^2}{c \tau + d} \Big) \rho^*(M) \Big] \vartheta(\tau,z) = 0,$$ which implies the modularity of $\vartheta$ under $M$. It is easy to verify the transformation law under the Heisenberg group by a similar argument.
\end{proof}

We can now compute $Q_{5/2,m,\beta}$. Let $\vartheta(\tau)$ denote the zero-value $\vartheta(\tau,0)$.

\begin{prop} The Poincar\'e square series of weight $5/2$ is $$Q_{5/2,m,\beta}(\tau) = E_{5/2,m,\beta}(\tau,0) + 4i \vartheta'(\tau).$$
\end{prop}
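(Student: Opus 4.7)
The plan is to verify the identity in two stages: first show that the right-hand side
\[
  \Theta(\tau) := E_{5/2,m,\beta}(\tau,0)+4i\vartheta'(\tau)
\]
is a holomorphic modular form of weight $5/2$ for $\rho^*$, and then identify $\Theta$ with $Q_{5/2,m,\beta}$.

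For the first stage, the idea is that although neither summand is individually modular, their quasi-modular defects cancel. By construction, $E^*_{5/2,m,\beta}(\tau,0,0)=E_{5/2,m,\beta}(\tau,0)+\tfrac{1}{y}\vartheta(\tau)$ is a non-holomorphic modular form of weight $5/2$ for $\rho^*$ (obtained by setting $z=0$ in the Jacobi transformation law of $E^*$), and Lemma 29 gives $\vartheta(M\tau)=(c\tau+d)^{1/2}\rho^*(M)\vartheta(\tau)$. Combining these with $\operatorname{Im}(M\tau)=y/|c\tau+d|^{2}$ and the identity $|c\tau+d|^{2}=(c\tau+d)^{2}-2icy(c\tau+d)$, one obtains for $E_{5/2,m,\beta}(\cdot,0)$ a modular defect proportional to $c(c\tau+d)^{3/2}\rho^*(M)\vartheta(\tau)$. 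Differentiating the transformation law of $\vartheta$ in $\tau$ and applying the chain rule produces a defect for $\vartheta'$ of the same shape, and the coefficient $4i$ is calibrated exactly to make the two defects cancel.

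For the second stage, both $\Theta$ and $Q_{5/2,m,\beta}$ live in $M_{5/2}(\rho^*)$, and their Eisenstein parts agree: the $\lambda=0$ contribution to $E_{5/2,m,\beta}(\tau,0)$ reproduces $E_{5/2,0}$ and $\vartheta'$ has trivial constant term, while the $\lambda=0$ term in the definition of $Q_{5/2,m,\beta}$ is by convention $P_{5/2,0,0}=E_{5/2,0}$. Hence $\Theta-Q_{5/2,m,\beta}$ is cuspidal, so it suffices to show $(f,\Theta)=(f,Q_{5/2,m,\beta})$ for every $f\in S_{5/2}(\rho^*)$. The right-hand pairing equals $\tfrac{2\Gamma(3/2)}{(4\pi m)^{3/2}}\sum_{\lambda\ge1}\lambda^{-3}c(\lambda^2m,\lambda\beta)$ directly from the definition of $Q$. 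The left-hand pairing is computed by unfolding $(f,E^*_{5/2,m,\beta}(\cdot,0,0))$ over $\mathcal{J}_\infty\backslash\mathcal{J}$ and correcting for the integration of $f$ against $\tfrac{1}{y}\vartheta$ (an integration by parts brings in exactly the $\vartheta'$ contribution with the factor $4i$), and the two sides then coincide.

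The main obstacle will be pinning down the precise constant $4i$. It arises from the confluence of the Gaussian derivative $\partial_s|_{s=0}I(y,0,s)=-\pi/y$, the residue computation of $\tilde L$ at $s=0$ in Lemma 27, the normalization $A_n$ in $\vartheta$ from Proposition 28, and the defect coefficients $2ic$ and $c/2$ in the two modular transformation computations. Ensuring that all of these normalizations combine consistently to produce exactly $4i$, and no other constant, is the core calculation.
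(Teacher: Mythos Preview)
Your proposal is correct and follows essentially the same two-stage strategy as the paper: cancel the quasi-modular defects of $E_{5/2,m,\beta}(\cdot,0)$ and $\vartheta'$ to get modularity, then match constant terms and Petersson pairings with cusp forms. The only cosmetic difference is in the orthogonality step: the paper pairs $4i\vartheta'+\tfrac{1}{y}\vartheta$ directly against a Poincar\'e series $P_{5/2,n,\gamma}$ and checks that the two resulting Gamma integrals cancel via $4i\cdot(2\pi i n)\tfrac{\Gamma(3/2)}{(4\pi n)^{3/2}}+\tfrac{\Gamma(1/2)}{(4\pi n)^{1/2}}=0$, whereas you phrase this as an integration by parts; these are the same computation, and the paper also regularizes the unfolding at $\mathrm{Re}(s)>0$ before letting $s\to 0$, which you should do as well.
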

\begin{proof} Using the modularity of $E_{5/2,m,\beta}^*$ and $\vartheta$, we find that $E_{5/2,m,\beta}(\tau,0)$ transforms under $\tilde \Gamma$ by $$E_{5/2,m,\beta}\Big( \frac{a \tau + b}{c \tau + d}, 0 \Big) = \rho^*(M) \Big[ (c \tau + d)^{5/2} E_{5/2,m,\beta}(\tau,0) - 2ic (c \tau + d)^{3/2} \vartheta(\tau) \Big].$$ Differentiating the equation $\vartheta(M \cdot \tau) = (c\tau + d)^{1/2} \rho^*(M) \vartheta(\tau)$ gives the similar equation $$\vartheta'(M \cdot \tau) = \rho^*(M) \Big[ (c \tau + d)^{5/2} \vartheta'(\tau) + \frac{1}{2} c (c \tau + d)^{3/2} \vartheta(\tau) \Big].$$ This implies that $E_{5/2,m,\beta}(\tau,0) + 4i \vartheta'(\tau)$ is a modular form of weight $5/2$. \\

Now we prove that it equals $Q_{5/2,m,\beta}$ by showing that it satisfies the characterization of $Q_{5/2,m,\beta}$ with respect to the Petersson scalar product. First, we remark that $E_{5/2,m,\beta}^*(\tau,0,0)$, although not holomorphic, satisfies that characterization: for any cusp form $f(\tau) = \sum_{\gamma} \sum_n c(n,\gamma) q^n$, and any $\mathrm{Re}[s] > 0$, $$\Big\langle f(\tau), E_{5/2,m,\beta}^*(\tau,0,s) \Big\rangle y^{1/2 + 2s} \, \mathrm{d}x \, \mathrm{d}y$$ is invariant under $\tilde \Gamma,$ and we integrate: \begin{align*} &\quad \int_{\tilde \Gamma \backslash \mathbb{H}} \langle f(\tau), E_{5/2,m,\beta}^*(\tau,0,s) \rangle y^{1/2 + 2s} \, \mathrm{d}x \, \mathrm{d}y \\ &= \sum_{\gamma \in \Lambda'/\Lambda} \sum_{\lambda \in \mathbb{Z}} \sum_n \int_{-1/2}^{1/2} \int_0^{\infty} \langle c(n,\gamma) \mathfrak{e}_{\gamma}, \mathfrak{e}_{\lambda \beta} \rangle \mathbf{e}\Big( n(x+iy) - m \lambda^2 (x - iy) \Big) y^{1/2 + 2s} \, \mathrm{d}x \, \mathrm{d}y \\ &= \sum_{\lambda \ne 0} c(\lambda^2 m, \lambda \beta) \int_0^{\infty} e^{-4\pi m \lambda^2 y} y^{1/2 + 2s} \, \mathrm{d}y \\ &= \sum_{\lambda \ne 0} c(\lambda^2 m, \lambda \beta) \frac{\Gamma(3/2 + 2s)}{(4\pi m \lambda^2)^{3/2 + 2s}}. \end{align*} Taking the limit as $s \rightarrow 0$, we get $$\lim_{s \rightarrow 0} \int_{\tilde \Gamma \backslash \mathbb{H}} \langle f(\tau), E_{5/2,m,\beta}^*(\tau,0,s) \rangle y^{1/2 + 2s} \, \mathrm{d}x \, \mathrm{d}y = \sum_{\lambda \ne 0} c(\lambda^2 m, \lambda \beta) \frac{\Gamma(3/2)}{(4\pi m \lambda^2)^{3/2}}.$$

The difference $$ \Big( E_{5/2,m,\beta}(\tau,0) + 4i \vartheta'(\tau) \Big) - E_{5/2,m,\beta}^*(\tau,0,0) = 4i \vartheta'(\tau) + \frac{1}{y} \vartheta(\tau)$$ is orthogonal to all cusp forms, because: when we integrate against a Poincar\'e series $$P_{5/2,n,\gamma}(\tau) = \frac{1}{2}\sum_{c,d} (c \tau + d)^{-k} \mathbf{e}\Big( n (M \cdot \tau) \Big) \rho^*(M)^{-1}(\mathfrak{e}_{\gamma}),$$ we find that \begin{align*} &\quad \Big( 4i \vartheta' + \frac{1}{y} \vartheta, P_{5/2,n,\gamma} \Big) \\ &= 4i \int_{-1/2}^{1/2} \int_0^{\infty} \langle \vartheta'(\tau), \mathbf{e}(n \tau) \mathfrak{e}_{\gamma} \rangle y^{1/2} \, \mathrm{d}y \, \mathrm{d}x + \int_{-1/2}^{1/2} \int_0^{\infty} \langle \vartheta(\tau), \mathbf{e}(n \tau) \mathfrak{e}_{\gamma} \rangle y^{-1/2} \, \mathrm{d}y \, \mathrm{d}x \\ &= \sum_{r \in \mathbb{Z} - \langle \beta, \gamma \rangle} \delta_{4mn - r^2} A_n \Big( 4i \cdot (2\pi i n) \frac{\Gamma(3/2)}{(4\pi n)^{3/2}} + \frac{\Gamma(1/2)}{(4\pi n)^{1/2}}\Big) \\ &= 0, \end{align*} since $4i \cdot (2\pi i n) \frac{\Gamma(3/2)}{(4\pi n)^{3/2}} + \frac{\Gamma(1/2)}{(4\pi n)^{1/2}} = 0$ for all $n$. Here, $\delta_N$ denotes the delta function $\delta_N = \begin{cases}1 : & N = 0; \\ 0: & N \ne 0. \end{cases}$ \\

Finally, the fact that $E_{5/2,m,\beta}(\tau,0) + 4i \vartheta'(\tau)$ and $Q_{5/2,m,\beta}$ both have constant term $1 \cdot \mathfrak{e}_0$ implies that their difference is a cusp form that is orthogonal to all Poincar\'e series and therefore zero.
\end{proof}

\begin{ex} Consider the quadratic form with Gram matrix $S = \begin{pmatrix} -2 \end{pmatrix}.$ The space of weight $5/2$ modular forms is $1$-dimensional, spanned by the Eisenstein series $$E_{5/2}(\tau) = \Big( 1 - 70q - 120q^2 - ... \Big) \mathfrak{e}_0 + \Big( -10 q^{1/4} - 48 q^{5/4} - 250q^{9/4} - ... \Big) \mathfrak{e}_{1/2}.$$ The nonmodular Jacobi Eisenstein series of index $1$ and weight $5/2$ is \begin{align*} E_{5/2,1,0}(\tau,z) &= \Big( 1 + q(\zeta^{-2} - 16 \zeta^{-1} - 16 - 16\zeta + \zeta^2) + q^2 (\zeta^{-2} - 32 \zeta^{-1} - 24 - 32 \zeta + \zeta^2) + ... \Big) \mathfrak{e}_0 \\ &+ \Big( -4q^{1/4} + q^{5/4} (-4 \zeta^{-2} - 8 \zeta^{-1} - 24 - 8 \zeta - 4\zeta^2) + ... \Big) \mathfrak{e}_{1/2}, \end{align*} and setting $z = 0$, we find $$E_{5/2,1,0}(\tau,0) = \Big( 1 - 46q - 120q^2 - 240q^3 - 454q^4 - ... \Big) \mathfrak{e}_0 + \Big( -4q^{1/4} - 48q^{5/4} - 196q^{9/4} - 240q^{13/4} - ... \Big) \mathfrak{e}_{1/2}.$$ This differs from $E_{5/2}$ by exactly $$\Big( -24q - 96q^4 - ... \Big) \mathfrak{e}_0 + \Big( -6 q^{1/4} - 54 q^{9/4} - ... \Big) \mathfrak{e}_{1/2} = 4i \vartheta'(\tau).$$ For comparison, the Jacobi Eisenstein series of index $2$ (which is a true Jacobi form) is \begin{align*} E_{5/2,2,0}(\tau,z) &= \Big( 1 + q (-10 \zeta^{-2} -16 \zeta^{-1} - 18 - 16 \zeta - 10 \zeta^2) + \\ &\quad\quad + q^2 (\zeta^{-4} - 16 \zeta^{-3} - 12 \zeta^{-2} - 16 \zeta^{-1} -34 - 16 \zeta - 12 \zeta^2 - 16 \zeta^3 + \zeta^4 ) + \cdots \Big) \mathfrak{e}_0 \\ &+  \Big( q^{1/4} (-2 \zeta^{-1} - 6 - 2\zeta) + q^{5/4} (-2\zeta^{-3} - 4\zeta^{-2} -14\zeta^{-1} -8 - 14\zeta - 4\zeta^2 - 2\zeta^3) + \cdots \Big) \mathfrak{e}_{1/2}, \end{align*} and we see that $E_{5/2,2}(\tau,0) = Q_{5/2,2}(\tau) = E_{5/2}(\tau)$ as predicted.
\end{ex}

\section{Coefficient formula for $Q_{k,m,\beta}$}

For convenience, the results of the previous sections are summarized here.

\begin{prop} Let $k \ge 5/2$. The coefficients $c(n,\gamma)$ of the Poincar\'e square series $Q_{k,m,\beta}$, $$Q_{k,m,\beta}(\tau) = \sum_{\gamma \in \Lambda'/\Lambda} \sum_{n \in \mathbb{Z} - q(\gamma)} c(n,\gamma) q^n \mathfrak{e}_{\gamma},$$ are given as follows:\\ (i) If $n < 0$, then $c(n,\gamma) = 0$. \\ (ii) If $n = 0$, then $c(n,\gamma) = 1$ if $\gamma = 0$ and $c(n,\gamma) = 0$ otherwise. \\ (iii) If $n > 0$, then \begin{align*} c(n,\gamma) &=  \delta + \frac{(-1)^{(2k - b^- + b^+) / 4} \pi^{k-1/2}}{2^{k-2} m^{k-1} \Gamma(k-1/2) \zeta(2k-2) \sqrt{|\mathrm{det}(S)|}} \times \\ &\quad\quad\quad \times \sum_{-\sqrt{4mn} < r < \sqrt{4mn}} \Big( L_{\mathcal{D}}(k-1) \prod_{\mathrm{bad}\, p} \Big[ \frac{1 - p^{-k + e/2 + 1}}{1 - p^{2-2k}} L_p(k + e/2 - 1) \Big] \Big)\end{align*} if $e$ is even, and \begin{align*} c(n,\gamma) &=  \varepsilon_{5/2} + \delta + \frac{(-1)^{(2k - b^- + b^+) / 4} \pi^{k-1/2}}{2^{k-2} m^{k-1} \Gamma(k-1/2) \sqrt{|\mathrm{det}(S)|}} \times \\ &\quad\quad\quad \times \sum_{-\sqrt{4mn} < r < \sqrt{4mn}} \Big( \frac{1}{L_D(k-1/2)} \prod_{\mathrm{bad}\, p} \Big[ (1 - p^{-k + e/2 + 1}) L_p(k + e/2 - 1) \Big] \Big)\end{align*} if $e$ is odd. Here, for each $r$, we define the set of ``bad primes'' to be $$\{ \mathrm{bad} \, \mathrm{primes}\} = \{2\} \cup \Big\{p \, \mathrm{prime}: \; p | d_{\beta}^2 m \mathrm{det}(S) \; \mathrm{or} \; v_p(d_{\beta}^2 d_{\gamma}^2 (n - r^2 / 4m)) \ne 0 \Big\},$$ and we define $$\mathcal{D} = 2md_{\beta}^4 d_{\gamma}^2 (-1)^{e/2 + 1} (n - r^2 / 4m) \mathrm{det}(S) \prod_{\mathrm{bad}\, p} p^2$$ if $e$ is even and $$D = m d_{\beta}^2 (-1)^{(e+1)/2} \mathrm{det}(S) \prod_{\mathrm{bad}\, p} p^2$$ if $e$ is odd; $L_{\mathcal{D}}$ and $L_D$ denote the $L$-series $$L_{\mathcal{D}}(s) = \sum_{a=1}^{\infty} \Big( \frac{\mathcal{D}}{a} \Big) a^{-s}, \; \; L_D(s) = \sum_{a=1}^{\infty} \Big( \frac{D}{a} \Big) a^{-s};$$ and $L_p$ is the $L$-series $$L_p(s) = \sum_{\nu=0}^{\infty} \mathbf{N}(p^{\nu}) p^{-\nu s},$$ where $$\mathbf{N}(p^{\nu}) = \#\Big\{ (v,\lambda) \in \mathbb{Z}^{n+1} / p^{\nu} \mathbb{Z}^{n+1}: \; q(v + \lambda \beta - \gamma) + m \lambda^2 - r \lambda + n = 0 \in \mathbb{Z}/p^{\nu} \mathbb{Z} \Big\}.$$ Finally, $$\delta = \begin{cases} 2: & n = m\lambda^2 \; \mathrm{for} \; \mathrm{some} \; \lambda \in \mathbb{Z}, \; \mathrm{and} \; \gamma = \lambda \beta; \\ 0: & \mathrm{otherwise}; \end{cases}$$ and $\varepsilon_{5/2} = 0$ unless $k = 5/2,$ in which case $$\varepsilon_{5/2} = \begin{cases} \sum_{\substack{r \in \mathbb{Z} - \langle \gamma, \beta \rangle \\ r^2 = 4mn}} \frac{24 n \cdot (-1)^{(5 + b^+ - b^-) / 4}}{\sqrt{2m \cdot \mathrm{det}(S)}} \prod_{\mathrm{bad}\, p} \Big[ \frac{(1 - p^{(e-3)/2}) (1 - p^{-1})}{1 - p^{-2}} L_p((e+3)/2) \Big]: & D = \square \ \\ 0: & \mathrm{otherwise}; \end{cases}$$ where $D = \square$ means that $D$ is a rational square.
\end{prop}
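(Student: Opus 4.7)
The plan is to assemble the formula by combining the results of the previous three sections: Section~5 computes the Fourier coefficients of the Jacobi Eisenstein series $E_{k,m,\beta}(\tau,z)$ in terms of Kloosterman sums $K_c(\beta,m,\gamma,n,r)$; Section~6 evaluates the associated Igusa zeta functions and turns the Dirichlet series $\tilde L(s)$ into a product of an $L$-series attached to a Kronecker symbol times finitely many local factors; and Section~7 handles the weight $5/2$ correction term. I would first reduce to the zero-value of a Jacobi Eisenstein series: for $k \ge 3$ the absolute convergence noted after Definition~24 gives
\[
Q_{k,m,\beta}(\tau) = E_{k,m,\beta}(\tau,0),
\]
while for $k = 5/2$ Proposition~30 yields the extra summand $4i\vartheta'(\tau)$, which is the source of the $\varepsilon_{5/2}$ term.

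Next I would read off the $(n,\gamma)$-coefficient of $Q_{k,m,\beta}$ from the Fourier expansion of $E_{k,m,\beta}(\tau,z)$ at $z=0$ by collapsing the variable $\zeta = \mathbf{e}(z)$ to $1$: this amounts to summing the $(n,r,\gamma)$-coefficient $c(n,r,\gamma)$ over all admissible $r \in \mathbb{Z} - \langle \gamma,\beta\rangle$. The Fourier expansion splits as $c(n,r,\gamma) = \text{(contribution from $c=0$)} + c'(n,r,\gamma)$; the first piece comes from the explicit series $\sum_{\lambda} \mathbf{e}(m\lambda^2 \tau + 2m\lambda z)\mathfrak{e}_{\lambda\beta}$ computed at the start of Section~5, which evaluated at $z=0$ contributes exactly $2$ to the $(n,\gamma)$-coefficient when $n = m\lambda^2$ and $\gamma = \lambda\beta$ for some $\lambda\in\mathbb{Z}\setminus\{0\}$, accounting for the term $\delta$. (The case $n=0,\gamma=0$ produces the constant term $1\cdot\mathfrak{e}_0$ from $\lambda=0$, giving cases (i) and (ii).)

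For $c'(n,r,\gamma)$, observe that the integral formula derived in Section~5 vanishes unless $4mn - r^2 > 0$, which produces the summation range $-\sqrt{4mn} < r < \sqrt{4mn}$. For each such $r$, Proposition~22 gives the desired closed form: the Dirichlet series $\tilde L(s)$ of Kloosterman sums factors as $\zeta(s-e)^{-1}\prod_p L_p(s)$, and Remark~23 followed by the generic-prime calculation identifies the product over good primes with $L_{\mathcal{D}}(k-1)/\zeta(2k-2)$ in the even-$e$ case and with $1/L_D(k-1/2)$ in the odd-$e$ case, up to the finite correction $\prod_{\text{bad}\,p}\bigl[(1-p^{-k+e/2+1})/(1-p^{2-2k}) \cdot L_p(k+e/2-1)\bigr]$ needed to adjust the Euler factors at the bad primes. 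The prefactor $\alpha_{k,m}(n,r)$ is exactly the expression preceding the Euler product in Proposition~22.

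Finally, for $k = 5/2$ I would add the contribution of $4i\vartheta'(\tau)$. The coefficient of $q^n\mathfrak{e}_\gamma$ in $\vartheta'(\tau) = \partial_\tau \vartheta(\tau,z)|_{z=0}$ equals $2\pi i n \sum_{r^2 = 4mn} A_n$, so multiplying by $4i$ and using the explicit form of $A_n$ from the constant displayed after Proposition~28 produces the $\varepsilon_{5/2}$ summand (with the factor $24n = -4i \cdot 2\pi i n \cdot (3/\pi)$ and the local correction $\prod_{p|D}$). The main obstacle is purely bookkeeping: tracking the constants and verifying that the generic-prime Euler factors collapse correctly to the claimed $L$-functions, and that the bad-prime correction is exactly the one stated; all analytic work has already been done in Sections~5--7.
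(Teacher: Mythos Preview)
Your approach is essentially identical to the paper's: the proposition is a summary statement, and the paper's own proof is a terse three sentences saying exactly what you spell out---sum the Jacobi--Eisenstein coefficients $c(n,r,\gamma)$ over $r$, let $\delta$ account for the $c=0$ theta-like term $\sum_\lambda \mathbf{e}(m\lambda^2\tau+2m\lambda z)\mathfrak{e}_{\lambda\beta}$, and let $\varepsilon_{5/2}$ account for $4i\vartheta'$ in weight $5/2$.

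One small slip: you invoke absolute convergence for the identity $Q_{k,m,\beta}(\tau)=E_{k,m,\beta}(\tau,0)$ when $k\ge 3$, but the remark you cite only gives absolute convergence for $k>3$; the case $k=3$ requires the Hecke trick, as noted at the start of Section~7.
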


\begin{proof} For $k > 5/2$, since $Q_{k,m,\beta}(\tau) = E_{k,m,\beta}(\tau,0)$, we get the coefficients of $Q_{k,m,\beta}$ by summing the coefficients of $E_{k,m,\beta}$ over $r$. $\delta$ accounts for the contribution from the term $$\sum_{\lambda \in \mathbb{Z}} \mathbf{e}\Big( m \lambda^2 \tau + 2m \lambda z\Big) \mathfrak{e}_{\lambda \beta}.$$ When $k = 5/2$, $\varepsilon_{5/2}$ accounts for $4i$ times the derivative of the theta series \[ \vartheta(\tau) = \sum_{\gamma \in \Lambda'/\Lambda} \sum_{\substack{4mn - r^2 = 0 \\ n \in \mathbb{Z} - q(\gamma) \\ r \in \mathbb{Z} - \langle \gamma,\beta \rangle}} A_n q^n \mathfrak{e}_{\gamma}. \qedhere \]
\end{proof}

\section{Example - calculating an automorphic product}

The notation in this section is taken from \cite{Bn}. \\

Since $Q_{k,m,\beta}$ can be calculated efficiently, we can automate the process of searching for automorphic products. The level of the lattice is irrelevant for this method, which seems to be an advantage over other ways of constructing automorphic products. \\

Let $\Lambda$ be an even lattice of signature $(2,n)$. Recall that Borcherds' singular theta correspondence (\cite{Bn}) sends a nearly-holomorphic modular form with integer coefficients $$f(\tau) = \sum_{\gamma} \sum_n c(n,\gamma) \mathfrak{e}_{\gamma}$$ of weight $k = 1 - n/2$ for the Weil representation to a meromorphic automorphic form $\Psi$ on the Grassmannian of $\Lambda$. The weight of $\Psi$ is $\frac{c(0,0)}{2}$, and $\Psi$ is holomorphic when $c(n,\gamma)$ is nonnegative for all $\gamma$ and $n < 0.$ \\

Automorphic products $\Psi$ of \textbf{singular weight} $n/2 - 1$ are particularly interesting, since in this case most of the Fourier coefficients of $\Psi$ must vanish: the nonzero Fourier coefficients correspond to vectors of norm zero. 

Tensoring nearly-holomorphic modular forms of weight $k$ for $\rho$ and weight $2-k$ for $\rho^*$ gives a scalar-valued (nearly-holomorphic) modular form of weight $2$, or equivalently an invariant differential form on $\mathbb{H}$, whose residue in $\infty$ must be $0$. This implies that the constant term in the Fourier expansion must be zero. Also, the coefficients $c(n,\gamma)$ of a nearly-holomorphic modular form must satisfy $c(n,\gamma) = c(n,-\gamma)$ for all $n$ and $\gamma$, due to the transformation law under $Z$. As shown in \cite{Bo} and \cite{Br}, this is the only obstruction for a sum $\sum_{n < 0} \sum_{\gamma} c(n,\gamma) \mathfrak{e}_{\gamma} + c(0,0) \mathfrak{e}_0$ to occur as the principal part of a nearly-holomorphic modular form. \\

The lattice $A_1(-2) + A_1(-2) + II_{1,1} + II_{1,1}$ produces an automorphic product of singular weight. This product also arises through an Atkin-Lehner involution from an automorphic product attached to the lattice $A_1 \oplus A_1 \oplus II_{1,1} \oplus II_{1,1}(8)$, found by Scheithauer in \cite{Sch}. \\

Using the dimension formula (proposition 6), for the lattice $\Lambda = \mathbb{Z}^2$ with Gram matrix $\begin{pmatrix} -4 & 0 \\ 0 & -4 \end{pmatrix}$, we find $$\mathrm{dim}\, M_3(\rho^*) = 4, \; \; \mathrm{dim}\, S_3(\rho^*) = 2.$$ The Eisenstein series of weight $3$ is \begin{align*} E_{3,(0,0)}(\tau) &= \Big( 1 - 24q - 164q^2 -192 q^3 - ... \Big) \mathfrak{e}_{(0,0)} \\ &+ \Big( -1/2 q^{1/8} - 73/2 q^{9/8} - 145 q^{17/8} - ... \Big) (\mathfrak{e}_{(1/4,0)} + \mathfrak{e}_{(3/4,0)} + \mathfrak{e}_{(0,1/4)} + \mathfrak{e}_{(0,3/4)} ) \\ &+ \Big( -10q^{1/2} - 48q^{3/2} - 260 q^{5/2} - ... \Big) (\mathfrak{e}_{(1/2,0)} + \mathfrak{e}_{(0,1/2)}) \\ &+ \Big( -2q^{1/4} - 52 q^{5/4} - 146q^{9/4} - ... \Big) (\mathfrak{e}_{(1/4,3/4)} + \mathfrak{e}_{(3/4,1/4)}) \\ &+ \Big( -13 q^{5/8} - 85 q^{13/8} - 192 q^{21/8} - ... \Big) (\mathfrak{e}_{(1/2,1/4)} + \mathfrak{e}_{(1/2,3/4)} + \mathfrak{e}_{(1/4,1/2)} + \mathfrak{e}_{(3/4,1/2)} ) \\ &+ \Big( -44q - 96q^2 - 288 q^3 - ... \Big) \mathfrak{e}_{(1/2,1/2)}. \end{align*} We find two linearly independent cusp forms as differences between $E_3$ and particular Poincar\'e square series: for example, \begin{align*} \frac{2}{3} \Big( Q_{3,1/8,(1/4,0)} - E_3 \Big) &= \Big( q^{1/8} + 9 q^{9/8} - 30 q^{17/8} + ... \Big) (\mathfrak{e}_{(1/4,0)} + \mathfrak{e}_{(3/4,0)} - \mathfrak{e}_{(0,1/4)} - \mathfrak{e}_{(0,3/4)}) \\ &+ \Big( 6q^{5/8} - 10q^{13/8} - 42 q^{29/8} - ... \Big) (\mathfrak{e}_{(1/2,1/4)} + \mathfrak{e}_{(1/2,3/4)} - \mathfrak{e}_{(1/4,1/2)} - \mathfrak{e}_{(3/4,1/2)} ) \\ &+ \Big( 8 q^{1/2} - 48 q^{5/2} + 72 q^{9/2} + ... \Big) (\mathfrak{e}_{(1/2,0)} - \mathfrak{e}_{(0,1/2)}), \end{align*} and $$\frac{1}{3}\Big( Q_{3,1/4,(1/4,1/4)} - E_3 \Big) = \Big( q^{1/4} - 6 q^{5/4} + 9 q^{9/4} + 10q^{13/4} + ... \Big) (\mathfrak{e}_{(1/4,1/4)} + \mathfrak{e}_{(3/4,3/4)} - \mathfrak{e}_{(1/4,3/4)} - \mathfrak{e}_{(3/4,1/4)}).$$ The other Eisenstein series $E_{3,(1/2,1/2)}$ can be easily computed by averaging $E_{3,(0,0)}$ over the Schr\"odinger representation (as in the appendix), but Eisenstein series other than $E_{k,0}$ never represent new obstructions so we do not need them. \\

We see that the sum $$q^{-1/8} ( \mathfrak{e}_{(1/4,0)} + \mathfrak{e}_{(3/4,0)} + \mathfrak{e}_{(0,1/4)} + \mathfrak{e}_{(0,3/4)}) + 2 \mathfrak{e}_{(0,0)}$$ occurs as the principal part of a nearly-holomorphic modular form, and the corresponding automorphic product has weight $1$ (which is the singular weight for the lattice $\Lambda \oplus II_{1,1} \oplus II_{1,1}$ of signature $(2,4)$). \\

A brute-force way to calculate the nearly-holomorphic modular form $F$ is to search for $\Delta \cdot F$ among cusp forms of weight $11$ for $\rho$. Since $\rho$ is also the dual Weil representation $\rho^*$ of the lattice with Gram matrix $\begin{pmatrix} 4 & 0 \\ 0 & 4 \end{pmatrix}$, we can use the same formulas for Poincar\'e square series. This is somewhat messier since the cusp space is now $8$-dimensional. Using the coefficients $$\alpha_0 = \frac{1222146606526920765211168}{665492278281307137675}, \; \alpha_1 = -\frac{814700552816424434236}{1996476834843921413025}, \; \alpha_2 = -\frac{5383641094234426568192}{133098455656261427535},$$ $$\alpha_3 = \frac{77190276919058739618292}{665492278281307137675}, \; \alpha_4 = -\frac{3816441333371605691531264}{1996476834843921413025},$$ a calculation shows that \begin{align*} &\quad F = \frac{\alpha_0 E_{11,0} + \alpha_1 Q_{11,1,0} + \alpha_2 Q_{11,2,0} + \alpha_3 Q_{11,3,0} + \alpha_4 Q_{11,4,0}}{\Delta} \\ &= \Big( 2 + 8q + 24 q^2 + 64 q^3 + 152 q^4 + ... \Big) (\mathfrak{e}_{(0,0)} - \mathfrak{e}_{(1/2,1/2)}) \\ &+ \Big( q^{-1/8} + 3 q^{7/8} + 11 q^{15/8} + 28 q^{23/8} + ... \Big) (\mathfrak{e}_{(1/4,0)} + \mathfrak{e}_{(3/4,0)} + \mathfrak{e}_{(0,1/4)} + \mathfrak{e}_{(0,3/4)}) \\ &+ \Big( -2q^{3/8} - 6q^{11/8} - 18 q^{19/8} - 44q^{27/8} - ... \Big) (\mathfrak{e}_{(1/4,1/2)} + \mathfrak{e}_{(3/4,1/2)} + \mathfrak{e}_{(1/2,1/4)} + \mathfrak{e}_{(1/2,3/4)}). \end{align*} Once enough coefficients have been calculated, it is not hard to identify these components: the coefficients come from the weight $-1$ eta products $$\frac{2 \eta(2\tau)^2}{\eta(\tau)^4} = 2 + 8q + 24 q^2 + 64 q^3 + 152 q^4 + ...$$ and $$\frac{\eta(\tau/2)^2}{\eta(\tau)^4} = q^{-1/8} - 2q^{3/8} + 3 q^{7/8} - 6 q^{11/8} + 11 q^{15/8} - 18 q^{19/8} + ...$$

We will calculate the automorphic product using theorem 13.3 of \cite{Bn}, following the pattern of the examples of \cite{DHM}. Fix the primitive isotropic vector $z = (1,0,0,0,0,0)$ and $z' = (0,0,0,0,0,1)$ and the lattice $K = \Lambda \oplus II_{1,1}$. We fix as positive cone the component of positive-norm vectors containing those of the form $(+,\ast,\ast,+)$. This is split into Weyl chambers by the hyperplanes $\alpha^{\perp}$ with $\alpha \in \{\pm (0,1/4,0,0), \pm (0,0,1/4,0)\}$. These are all essentially the same so we will fix the Weyl chamber $$W = \{(x_1,x_2,x_3,x_4): \; x_1,\, x_2,\, x_3, \, x_4,\, x_1x_4 - 2x_2^2 - 2x_3^2 > 0\} \subseteq K \otimes \mathbb{R}.$$ The Weyl vector attached to $F$ and $W$ is the isotropic vector $$\rho = \rho(K,W,F_K) = (1/4,1/8,1/8,1/4),$$ which can be calculated with theorem 10.4 of \cite{Bn}.

The product $$\Psi_z(Z) = \mathbf{e}\Big( \langle \rho,Z \rangle \Big) \prod_{\substack{\lambda \in K' \\ \langle \lambda, W \rangle > 0 }} \Big( 1 - \mathbf{e}((\lambda, Z)) \Big)^{c(q(\lambda),\lambda)}$$ has singular weight, and therefore its Fourier expansion has the form $$\Psi_z(Z) = \sum_{\substack{\lambda \in K' \\ \langle \lambda, W \rangle > 0}} a(\lambda) \mathbf{e}\Big( \langle \lambda + \rho, Z \rangle \Big)$$ where $a(\lambda) = 0$ unless $\lambda + \rho$ has norm $0$. Since $\Psi_z(w(Z)) = \mathrm{det}(w) \Psi_z(w)$ for all elements of the Weyl group $w \in G,$ we can write this as $$\Psi_z(w(Z)) = \sum_{w \in G} \mathrm{det}(w) \sum_{\substack{\lambda \in K' \\ \lambda + \rho \in \overline{W} \\ \langle \lambda, W \rangle > 0}} a(\lambda) \mathbf{e}\Big( \langle w(\lambda + \rho), Z \rangle \Big).$$ As in \cite{DHM}, any such $\lambda$ must be a positive integer multiple of $\rho$; and in fact to be in $K'$ it must be a multiple of $4\rho$. Also, the only terms in the product that contribute to $a(\lambda)$ come from other positive multiples of $4 \rho$; i.e. $$\mathbf{e}\Big( \langle \rho, Z \rangle \Big) \prod_{m > 0} \Big[ 1 - \mathbf{e} \Big( \langle 4m \rho, Z \rangle \Big) \Big]^{c(0,4m\rho)} = \sum_{\substack{\lambda \in K' \\ \langle \lambda, W \rangle > 0 \\ \lambda + \rho \in \overline{W}}} a(\lambda) \mathbf{e} \Big( \langle \lambda + \rho, Z \rangle \Big).$$ Here, $c(0,4m \rho) = 2 \cdot (-1)^m$, so $$\sum_{\lambda} a(\lambda) \mathbf{e} \Big( \langle \lambda + \rho, Z \rangle \Big) = \mathbf{e}\Big( \langle \rho, Z \rangle \Big) \prod_{m > 0} \Big[ 1 - \mathbf{e} \Big( \langle 4m \rho, Z \rangle \Big) \Big]^{2 (-1)^m},$$ so we get the identity $$\Psi_z(Z) = \mathbf{e} \Big( \langle \rho,Z \rangle \Big) \prod_{\substack{\lambda \in K' \\ \langle \lambda, W \rangle > 0}} \Big( 1 - \mathbf{e} (\langle \lambda, Z \rangle ) \Big)^{c(q(\lambda),\lambda)} = \sum_{w \in G} \mathrm{det}(w) \mathbf{e} \Big( \langle w(\rho),Z \rangle \Big) \prod_{m=1}^{\infty} \Big[ 1 - \mathbf{e} \Big( 4m \langle w(\rho), Z \rangle \Big) \Big]^{2 (-1)^m}.$$ Note that the product on the right is an eta product $$q \prod_{m=1}^{\infty} \Big[ 1 - q^{4m} \Big]^{2 (-1)^m} = \frac{\eta(8\tau)^4}{\eta(4 \tau)^2},$$ so we can write this in the more indicative form $$\mathbf{e}\Big( \langle \rho,Z \rangle \Big) \prod_{\substack{\lambda \in K' \\ \langle \lambda, W \rangle > 0}} \Big( 1 - \mathbf{e}( \langle \lambda, Z \rangle ) \Big)^{c(q(\lambda),\lambda)} = \sum_{w \in G} \mathrm{det}(w) \frac{\eta(8 \langle w(\rho), Z \rangle)^4}{\eta(4 \langle w(\rho), Z \rangle)^2}.$$

\section{Example - computing Petersson scalar products}

One side effect of the computation of Poincar\'e square series is another way to compute the Petersson scalar product of (vector-valued) cusp forms numerically. This is rather easy so we will only give an example, rather than state a general theorem. Consider the weight $3$ cusp form \begin{align*} \Theta(\tau) &= \sum_{n, \gamma} c(n,\gamma) q^n \mathfrak{e}_{\gamma} \\ &= \Big( q^{1/6} + 2 q^{7/6} - 22q^{13/6} + 26q^{19/6} + ... \Big) (\mathfrak{e}_{(1/6,2/3)} + \mathfrak{e}_{(1/3,5/6)} + \mathfrak{e}_{(2/3,1/6)} + \mathfrak{e}_{(5/6,1/3)} - 2 \mathfrak{e}_{(1/6,1/6)} - 2 \mathfrak{e}_{(5/6,5/6)}) \\ &+ \Big(  -6 q^{1/2} + 18 q^{3/2} + 0 q^{5/2} - 12 q^{7/2} - ... \Big) (\mathfrak{e}_{(1/2,0)} + \mathfrak{e}_{(0,1/2)} - 2 \mathfrak{e}_{(1/2,1/2)}), \end{align*} which is the theta series with respect to a harmonic polynomial for the lattice with Gram matrix $\begin{pmatrix} -4 & -2 \\ -2 & -4 \end{pmatrix}.$ The component functions are $$q^{1/6} + 2 q^{7/6} - 22q^{13/6} + 26q^{19/6} + ... = \eta(\tau / 3)^3 \eta(\tau)^3 + 3 \eta(\tau)^3 \eta(3\tau)^3$$ and $$-6q^{1/2} + 18q^{3/2} + 0 q^{5/2}- 12q^{7/2} + ... = -6 \eta(\tau)^3 \eta(3\tau)^3.$$ To compute the Petersson scalar product $( \Theta, \Theta ),$ we write $\Theta$ as a linear combination of Eisenstein series and Poincar\'e square series; for example, $$\Theta = E_{3,0} - Q_{3,1/6,(1/6,1/6)}.$$ It follows that \begin{align*} (\Theta, \Theta ) &= -(\Theta, Q_{3,1/6,(1/6,1/6)} ) \\ &= -\frac{9}{2 \pi^2} \sum_{\lambda = 1}^{\infty} \frac{c(\lambda^2 / 6, (\lambda/6, \lambda/6))}{\lambda^4} \\ &= \frac{9}{\pi^2} \Big[ \sum_{\lambda \equiv 1,5 \, (6)} \frac{a(\lambda^2 / 2)}{\lambda^4} - 6\sum_{\lambda \equiv 3 \, (6)} \frac{a(\lambda^2 / 2)}{\lambda^4} \Big], \end{align*} where $a(n)$ is the coefficient of $n$ in $\eta(\tau)^3 \eta(3\tau)^3$. This series converges rather slowly but summing the first $150$ terms seems to give the value $( \Theta, \Theta ) \approx 0.24$. We get far better convergence for larger weights. \\

For scalar-valued forms (i.e. when the lattice $\Lambda$ is unimodular), applying this method to Hecke eigenforms gives the same result as a well-known method involving the symmetric square $L$-function. For example, the discriminant $$\Delta = q - 24 q^2 + ... = \sum_{n=1}^{\infty} c(n) q^n \in S_{12}$$ can be written as $$\Delta = \frac{53678953}{304819200} (Q_{12,1,0} - E_{12})$$ which gives the identity $$( \Delta, \Delta ) = \frac{131 \cdot 593 \cdot 691}{2^{23} \cdot 3 \cdot 7 \cdot \pi^{11}} \sum_{n=1}^{\infty} \frac{c(n^2)}{n^{22}}.$$ This identity is equivalent to the case $s = 22$ of equation (29) of \cite{Zag}: $$\sum_{n=1}^{\infty} \frac{c(n)^2}{n^{22}} = \frac{7 \cdot 11 \cdot 4^{22} \cdot \pi^{33} \cdot \zeta(11)}{2 \cdot 23 \cdot 691 \cdot 22! \cdot \zeta(22)} ( \Delta, \Delta ),$$ since $$\sum_{n=1}^{\infty} \frac{c(n)^2}{n^{22}} = \zeta(11) \sum_{n=1}^{\infty} \frac{c(n^2)}{n^{22}},$$ which can be proved directly using the fact that $\Delta$ is a Hecke eigenform.

\section{Appendix - averaging operators}

For applications to automorphic products, we do not need the Eisenstein series $E_{k,\beta}$ for any nonzero $\beta \in \Lambda'/\Lambda$ with $q(\beta) \in \mathbb{Z}$. This is essentially because the constant terms $\mathfrak{e}_{\beta}$, $\beta \ne 0$ are not counted towards the principal part of the input function $F$ in Borcherds' lift. However, the $E_{k,\beta}$ are still necessary in order to span the full space of modular forms. \\

It seems difficult to apply the formula for $E_{k,\beta}$ in \cite{BK} directly since the Kloosterman sums there do not reduce to Ramanujan sums. A brute-force way to find $E_{k,\beta}$ is to search for $\Delta \cdot E_{k,\beta}$ as a linear combination of Poincar\'e square series, but this is usually messy. Instead, we mention here that averaging over Schr\"odinger representations allows one in many (but not all) cases to read off the coefficients of all $E_{k,\beta}$ from those of $E_{k,0}$.

\begin{defn} Let $\beta \in \Lambda'/\Lambda$ have denominator $d_{\beta}$. The \textbf{averaging operator} attached to $\beta$ is \begin{align*} A_{\beta} : M_k(\rho^*) &\rightarrow M_k(\rho^*) \\ A_{\beta} F(\tau) &= \frac{1}{d_{\beta^2}} \sum_{\lambda,\mu \in \mathbb{Z}/d_{\beta}^2 \mathbb{Z}} \sigma_{\beta}^*(\lambda,\mu,0) F(\tau). \end{align*}
\end{defn}

This is well-defined because $\sigma_{\beta}^*(\lambda,\mu,0)$ depends only on the remainder of $\lambda$ and $\mu$ mod $d_{\beta}^2$; and it defines a modular form because $$\Big( \sigma_{\beta}^*(\zeta) F \Big) |_{k,\rho^*} M (\tau) = \sigma_{\beta}^*(\zeta \cdot M) F(\tau)$$ for all $\zeta \in \mathcal{H}$ and $M \in \tilde \Gamma.$ \\

Explicitly, if the components of $F$ are written out as $$F(\tau) = \sum_{\gamma \in \Lambda'/\Lambda} f_{\gamma}(\tau) \mathfrak{e}_{\gamma},$$ then $$A_{\beta} F(\tau) = \frac{1}{d_{\beta}^2} \sum_{\gamma \in \Lambda'/\Lambda} \sum_{\lambda \in \mathbb{Z}/d_{\beta}^2 \mathbb{Z}} \sum_{\mu \in \mathbb{Z}/d_{\beta}^2 \mathbb{Z}} \mathbf{e}\Big( -\mu \langle \beta, \gamma \rangle + \lambda \mu \cdot q(\beta) \Big) f_{\gamma}(\tau) \mathfrak{e}_{\gamma - \lambda \beta}.$$ The sum over $\mu$ is nonzero exactly when $\langle \beta, \gamma \rangle - \lambda q(\beta) \in \mathbb{Z}$, in which case it becomes $d_{\beta}^2$; therefore, $$A_{\beta} F(\tau) = \sum_{\lambda \in \mathbb{Z}/d_{\beta}^2 \mathbb{Z}} \sum_{\substack{\gamma \in \Lambda'/ \Lambda \\ \langle \beta, \gamma \rangle - \lambda q(\beta) \in \mathbb{Z}}} f_{\gamma}(\tau) \mathfrak{e}_{\gamma - \lambda \beta}.$$

In the special case that $q(\beta) \in \mathbb{Z}$, this is a constant multiple of the modified averaging operator $$A'_{\beta} F(\tau) = \sum_{\substack{\gamma \in \Lambda'/\Lambda \\ \langle \beta, \gamma \rangle \in \mathbb{Z}}} \Big( \sum_{\lambda \in \mathbb{Z}/d_{\beta} \mathbb{Z}} f_{\gamma + \lambda \beta}(\tau) \Big) \mathfrak{e}_{\gamma},$$ making this easier to compute. \\

When $F = E_{k,0}$ is the Eisenstein series $$E_{k,0}(\tau) = \frac{1}{2} \sum_{c,d} (c \tau + d)^{-k} \rho^*(M)^{-1} \mathfrak{e}_0,$$ then we get \begin{align*} A_{\beta} E_{k,0}(\tau) &= \frac{1}{2 d_{\beta}^2} \sum_{c,d} (c \tau + d)^{-k} \sum_{\lambda,\mu \in \mathbb{Z}/d_{\beta}^2 \mathbb{Z}} \sigma_{\beta}^* (\lambda,\mu,0) \rho^*(M)^{-1} \mathfrak{e}_0 \\ &= \frac{1}{2 d_{\beta}^2} \sum_{c,d} (c \tau + d)^{-k} \sum_{\lambda, \mu \in \mathbb{Z}/d_{\beta}^2 \mathbb{Z}} \mathbf{e}\Big( \lambda \mu q(\beta) \Big) \rho^*(M)^{-1} \mathfrak{e}_{-\lambda \beta} \\ &= \sum_{\substack{\lambda \in \mathbb{Z}/d_{\beta}^2 \mathbb{Z} \\ \lambda q(\beta) \in \mathbb{Z}}} E_{k,\lambda \beta}. \end{align*} In many cases this makes it possible to find all the Eisenstein series $E_{k,\beta}.$

\begin{ex} Let $S$ be the Gram matrix $S = \begin{pmatrix} -8 \end{pmatrix}.$ The Eisenstein series of weight $5/2$ is \begin{align*} E_{5/2,0}(\tau) &= \Big( 1 - 24q - 72q^2 - 96q^3 - ... \Big) \mathfrak{e}_{0} \\ &+ \Big( -\frac{1}{2}q^{1/16} - 24q^{17/16} - 72 q^{33/16} - ... \Big) (\mathfrak{e}_{1/8} + \mathfrak{e}_{7/8}) \\ &+ \Big( -5 q^{1/4} - 24q^{5/4} - 125q^{9/4} - ... \Big) (\mathfrak{e}_{1/4} + \mathfrak{e}_{3/4} ) \\ &+ \Big( -\frac{25}{2} q^{9/16} - \frac{121}{2} q^{25/16} - 96 q^{41/16} - ... \Big) (\mathfrak{e}_{3/8} + \mathfrak{e}_{5/8}) \\ &+ \Big( -46 q - 48q^2 - 144q^3 - ... \Big) \mathfrak{e}_{1/2}. \end{align*} Averaging over the Schr\"odinger representation attached to $\beta = (1/2)$ gives \begin{align*} E_{5/2,0}(\tau) + E_{5/2,1/2}(\tau) &= \Big( 1 - 70q - 120q^2 - 240q^3 - ... \Big) (\mathfrak{e}_0 + \mathfrak{e}_{1/2}) \\ &+ \Big( -10q^{1/4} - 48q^{5/4} - 250q^{9/4} - ... \Big) (\mathfrak{e}_{1/4} + \mathfrak{e}_{3/4}), \end{align*} from which we can read off the Fourier coefficients of $E_{5/2,1/2}(\tau).$
\end{ex}

\section{Appendix - calculating the Euler factors at $p = 2$}

We will summarize the calculations of Appendix B in \cite{CKW} as they apply to our situation.

\begin{prop} Let $f(X) = \bigoplus_{i \in \mathbb{N}_0} 2^i Q_i(X) \oplus L + c$ be a $\mathbb{Z}_2$-integral quadratic polynomial in normal form, and assume that all $Q_i$ are given by $Q_i(v) = v^T S_i v$ for a symmetric (not necessarily even) $\mathbb{Z}_2$-integral matrix $S_i$. For any $j \in \mathbb{N}_0$, define $$\mathbf{Q}_{(j)} := \bigoplus_{\substack{0 \le i \le j \\ i \equiv j \, (2)}} Q_i, \; \; \mathbf{r}_{(j)} = \mathrm{rank}(\mathbf{Q}_{(j)}), \; \; \mathbf{p}_{(j)} = 2^{\sum_{0 \le i < j} \mathbf{r}_{(i)}}.$$ Let $\omega \in \mathbb{N}$ be such that $Q_i = 0$ for all $i > \omega.$ Then: \\ (i) If $L = 0$ and $c = 0$, let $r = \sum_i \mathrm{rank}(Q_i)$; then the Igusa zeta function for $f$ at $2$ is \begin{align*} \zeta_{Ig}(f;2;s) &= \sum_{0 \le \nu < \omega - 1} \frac{2^{-\nu s}}{\mathbf{p}_{(\nu)}} I_0(\mathbf{Q}_{(\nu)}, \mathbf{Q}_{(\nu+1)}, Q_{\nu+2}) \; + \\ &\quad \quad \quad + \Big[ \frac{2^{-s(\omega - 1)}}{\mathbf{p}_{(\omega - 1)}} I_0(\mathbf{Q}_{(\omega-1)}, \mathbf{Q}_{(\omega)}, 0) + \frac{2^{-\omega s}}{\mathbf{p}_{(\omega)}} I_0(\mathbf{Q}_{(\omega)}, \mathbf{Q}_{(\omega - 1)}, 0) \Big] \cdot (1 - 2^{-2s - r})^{-1}.\end{align*} (ii) If $L(x) = bx$ for some $b \ne 0$ with $v_2(b) = \lambda$ and if $v_2(b) \le v_2(c)$, then \begin{align*} \zeta_{Ig}(f;2;s) &= \sum_{0 \le \nu < \lambda - 2} \frac{2^{-\nu s}}{\mathbf{p}_{(\nu)}}  I_0(\mathbf{Q}_{(\nu)}, \mathbf{Q}_{(\nu+1)}, Q_{\nu+2}) +\\ &\quad\quad\quad + \sum_{\max\{0,\lambda - 2\} \le \nu < \lambda} \frac{2^{-\nu s}}{\mathbf{p}_{(\nu)}} I_0^{\lambda - \nu}(\mathbf{Q}_{(\nu)}, \mathbf{Q}_{(\nu+1)}, Q_{i+2}) + \frac{2^{-\lambda s}}{\mathbf{p}_{(\lambda)}} \cdot \frac{1}{2 - 2^{-s}}.\end{align*} (iii) If $L(x) = bx$ with $b \ne 0$ and $v_2(c) < v_2(b) \le v_2(c) + 2$, let $\kappa = v_2(c)$; then \begin{align*} \zeta_{Ig}(f;2;s) &= \sum_{0 \le \nu < \lambda - 2} \frac{2^{-\nu s}}{\mathbf{p}_{(\nu)}} I_{c / 2^{\nu}}(\mathbf{Q}_{(\nu)}, \mathbf{Q}_{(\nu + 1)}, Q_{\nu + 2}) \\ &\quad\quad \quad + \sum_{\max\{0,\lambda - 2\} \le \nu \le \kappa} \frac{2^{-\nu s}}{\mathbf{p}_{(\nu)}} I^{\lambda - \nu}_{c / 2^{\nu}}( \mathbf{Q}_{(\nu)}, \mathbf{Q}_{(\nu + 1)}, Q_{\nu + 2}) + \frac{1}{\mathbf{p}_{(\kappa + 1)}} 2^{-\kappa s}.\end{align*} (iv) If $L = 0$ or $L(x) = bx$ with $v_2(b) > v_2(c) + 2$, let $\kappa = v_2(c)$; then \begin{align*} \zeta_{Ig}(f;2;s) &= \sum_{0 \le \nu \le \kappa} \frac{2^{-\nu s}}{\mathbf{p}_{(\nu)}} I_{c / 2^{\nu}}(\mathbf{Q}_{(\nu)}, \mathbf{Q}_{(\nu + 1)}, Q_{\nu + 2}) + \frac{1}{\mathbf{p}_{(\nu + 1)}} 2^{-\kappa s}.\end{align*}
\end{prop}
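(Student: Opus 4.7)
The plan is to deduce Proposition~32 as a specialization of the main theorem of Appendix~B of \cite{CKW}, in the same spirit that Proposition~20 was obtained from their Theorem~2.1. The formulas in cases (i)--(iv) match exactly the four branches of that theorem once the book-keeping variable $t$ of \cite{CKW} is replaced by $2^{-s}$.

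First I would carry out the reduction to the \cite{CKW} setup. The polynomial $f(v,\lambda) = q(v + \lambda\beta - \gamma) + m\lambda^{2} - r\lambda + n$ has rational coefficients with denominators controlled by $d_{\beta}$ and $d_{\gamma}$, so (as already done for odd primes in the remark preceding Proposition~20) I would multiply the defining congruence by $d_{\beta}^{2} d_{\gamma}^{2}$ and make the linear substitution that absorbs $\beta$ and $\gamma$ into the variable $v$. This produces a $\mathbb{Z}_{2}$-integral quadratic polynomial which, by the algorithm of Section~4.9 of \cite{CKW}, can be brought into the normal form $\bigoplus_{i} 2^{i} Q_{i} \oplus L + c$ in an isospectral manner, preserving the counting quantities $N_{f}(2^{\nu})$ and hence $\zeta_{Ig}(f;2;s)$.

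The core step is a partition of $\mathbb{Z}_{2}^{e+1}$ according to the $2$-adic valuation pattern of the variables, followed by a recursive reduction. For each $\nu \ge 0$ one splits the domain into the locus where the variables attached to the blocks $Q_{0},\ldots,Q_{\nu-1}$ have prescribed minimal valuations while at least one variable appearing in $Q_{\nu}$ or $Q_{\nu+1}$ remains a $2$-adic unit. On each such piece a change of variables of the form $v_{i} \mapsto 2^{\lceil i/2\rceil}v_{i}$, together with the translation-invariance of Haar measure, exhibits the integral as the product of a finite Gauss-type character sum -- exactly the helper function $I_{0}$, $I_{c}$, $I_{0}^{\lambda-\nu}$, or $I_{c}^{\lambda-\nu}$ of Appendix~B of \cite{CKW} -- and a smaller-dimensional Igusa integral for the remaining deeper blocks. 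Summing the contributions over $\nu$ up to the level at which the recursion terminates, and recognizing the geometric tail $(1 - 2^{-2s-r})^{-1}$ in case (i), produces the stated formulas. The four cases are distinguished by the reason for termination: exhaustion of the quadratic blocks (case (i)), domination by the linear term $L$ (case (ii)), domination by $c$ (case (iv)), or a simultaneous transition (case (iii)); the superscript $\lambda - \nu$ on the helper function records the precise level at which $L$ becomes comparable to the surviving quadratic blocks.

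The main obstacle is the dyadic subtlety. Over $\mathbb{Z}_{2}$ a unimodular quadratic form need not diagonalize, and its classification involves hyperbolic planes, the anisotropic block $\begin{pmatrix} 2 & 1 \\ 1 & 2 \end{pmatrix}$, and odd-rank diagonal summands. Consequently the helper functions here depend on \emph{two} consecutive blocks $(\mathbf{Q}_{(\nu)}, \mathbf{Q}_{(\nu+1)})$ together with the ``next'' block $Q_{\nu+2}$, rather than on a single block as in Proposition~20, and must account for how the Gauss sum at level $\nu$ interacts with the odd-diagonal contribution one level deeper. Rather than reproduce the detailed case analysis underlying these functions, I would invoke the corresponding theorem in Appendix~B of \cite{CKW} directly and match the four branches by reading off the valuation hypotheses on $b$ and $c$.
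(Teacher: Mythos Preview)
Your approach is essentially the paper's: the proposition is a direct restatement of the $p=2$ case of the main result of \cite{CKW} (specifically Theorem~4.5 there, with $t$ replaced by $2^{-s}$), and the paper proves it by a one-line citation to that theorem, together with a reference to Appendix~B of \cite{CKW} for the helper functions $I_a^b$.

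One point of confusion to flag: your second paragraph is extraneous and slightly misconceives what must be proved. The proposition is stated for an \emph{arbitrary} $\mathbb{Z}_2$-integral quadratic polynomial already in normal form; it is not a statement about the particular polynomial $f(v,\lambda)=q(v+\lambda\beta-\gamma)+m\lambda^2-r\lambda+n$. The reduction you describe (clearing denominators by $d_\beta^2 d_\gamma^2$ and bringing the polynomial to normal form via the algorithm of Section~4.9 of \cite{CKW}) belongs to the \emph{application} of this proposition elsewhere in the paper, not to its proof. Once you drop that paragraph, what remains --- invoke the relevant theorem of \cite{CKW} and translate notation --- is exactly what the paper does.
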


Here, $I_a^b(Q_0,Q_1,Q_2)(s)$ are helper functions that we describe below, and we set $I_a(Q_0,Q_1,Q_2) = I^{\infty}_a(Q_0,Q_1,Q_2).$ Note that not every unimodular quadratic form $Q_i$ over $\mathbb{Z}_2$ can be written in the form $Q_i(v) = v^T S_i v$; but $2 \cdot Q_i$ can always be written in this form, and replacing $f$ by $2 \cdot f$ only multiplies $\zeta_{Ig}(f;2;s)$ by $2^{-s}$, so this does not lose generality. \\

Every unimodular quadratic form over $\mathbb{Z}_2$ that has the form $Q_i(v) = v^T S_i v$ is equivalent to a direct sum of at most two one-dimensional forms $a \cdot \mathrm{Sq}(x) = ax^2$; at most one elliptic plane $\mathrm{Ell}(x,y) = 2x^2 + 2xy + 2y^2$; and any number of hyperbolic planes $\mathrm{Hyp}(x,y) = 2xy.$ This decomposition is not necessarily unique. It will be enough to fix one such decomposition. \\

The following proposition explains how to compute $I_a^b(Q_0,Q_1,Q_2)(s).$ \\

\begin{prop} Define the function $$\mathrm{Ig}(a,b,\nu) = \begin{cases} \frac{2^{-\nu s}}{2 - 2^{-s}}: & v_2(a) \ge \mathrm{min}(b,\nu); \\ 2^{-v_2(a) s}: & v_2(a) < \mathrm{min}(b,\nu). \end{cases}$$ (Here, $v_2(0) = \infty$.) For a unimodular quadratic form $Q$ of rank $r$, fix a decomposition into hyperbolic planes, at most one elliptic plane and at most two square forms as above. Let $\varepsilon=1$ if $Q$ contains no elliptic plane and $\varepsilon = -1$ otherwise. Define functions $H_1(a,b,Q)$, $H_2(a,b,Q)$ and $H_3(a,b,Q)$ as follows: \\

\noindent (i) If $Q$ contains no square forms, then \begin{align*} H_1(a,b,Q) &= (1 - 2^{-r}) \mathrm{Ig}(a,b,1); \\ H_2(a,b,Q) &= \Big( 1 - 2^{-r/2} \varepsilon \Big) \cdot \Big( \mathrm{Ig}(a,b,1) +  2^{-r/2}\varepsilon \mathrm{Ig}(a,b,2) \Big); \\ H_3(a,b,Q) &= 0. \end{align*} (ii) If $Q$ contains one square form $cx^2$, then \begin{align*} H_1(a,b,Q) &= \mathrm{Ig}(a,b,0) - 2^{-r} \mathrm{Ig}(a,b,1); \\ H_2(a,b,Q) &= (1 - 2^{-(r-1)/2} \varepsilon) \mathrm{Ig}(a,b,0) - 2^{-r} \mathrm{Ig}(a,b,2) + 2^{-(r+1)/2}\varepsilon (\mathrm{Ig}(a,b,2) + \mathrm{Ig}(a+c,b,2)); \\ H_3(a,b,Q) &= 2^{-r} (\mathrm{Ig}(a+c,b,3) - \mathrm{Ig}(a+c,b,2)). \end{align*} (iii) If $Q$ contains two square forms $cx^2$, $dx^2$ and $c + d \equiv 0 \, (4)$, then \begin{align*} H_1(a,b,Q) &= \mathrm{Ig}(a,b,0) - 2^{-r} \mathrm{Ig}(a,b,1); \\ H_2(a,b,Q) &= \mathrm{Ig}(a,b,0) - 2^{-r/2}\varepsilon \mathrm{Ig}(a,b,1) + (2^{-r/2}\varepsilon - 2^{-r}) \mathrm{Ig}(a,b,2); \\ H_3(a,b,Q) &= (-1)^{(c+d)/4} 2^{-r} (\mathrm{Ig}(a,b,3) - \mathrm{Ig}(a,b,2)). \end{align*} (iv) If $Q$ contains two square forms $cx^2$, $dx^2$ and $c + d \not \equiv 0 \, (4)$, then \begin{align*} H_1(a,b,Q) &= \mathrm{Ig}(a,b,0) - 2^{-r} \mathrm{Ig}(a,b,1); \\ H_2(a,b,Q) &=  (1 - 2^{-(r-2)/2} \varepsilon) \mathrm{Ig}(a,b,0) + 2^{-r/2} \varepsilon (\mathrm{Ig}(a,b,1) + \mathrm{Ig}(a+c,b,2)) - 2^{-r} \mathrm{Ig}(a,b,2); \\ H_3(a,b,Q) &= -2^{1-r} \mathrm{Ig}(a,b,1) + 2^{-r} (\mathrm{Ig}(a,b,2) + \mathrm{Ig}(a+c+d,b,3) ). \end{align*}

Let $\varepsilon_1 = 1$ if $Q_1$ contains no elliptic plane and $\varepsilon_1 = -1$ otherwise, and let $r_1$ denote the rank of $Q_1$. Then $I_a^b(Q_0,Q_1,Q_2)$ is given as follows: \\ (1) If both $Q_1$ and $Q_2$ contain at least one square form, then $$I_a^b(Q_0,Q_1,Q_2) = H_1(a,b,Q_0).$$ (2) If $Q_1$ contains no square forms but $Q_2$ contains at least one square form, then $$I_a^b(Q_0,Q_1,Q_2) = H_2(a,b,Q_0).$$ (3) If both $Q_1$ and $Q_2$ contain no square forms, then $$I_a^b(Q_0,Q_1,Q_2) = H_2(a,b,Q_0) + 2^{-r_1 / 2} \varepsilon_1  H_3(a,b,Q_0).$$ (4) If $Q_1$ contains one square form $cx^2$, and $Q_2$ contains no square forms, then $$I_a^b(Q_0,Q_1,Q_2) = H_1(a,b,Q_0) + 2^{-(r_1 + 1)/2} \varepsilon_1 (H_3(a,b,Q_0) + H_3(a+2c,b,Q_0)).$$ (5) If $Q_1$ contains two square forms $cx^2$ and $dx^2$ such that $c+d \equiv 0\, (4)$, and $Q_2$ contains no square forms, then $$I_a^b(Q_0,Q_1,Q_2) = H_1(a,b,Q_0) + 2^{-r_1 / 2} \varepsilon_1 H_3(a,b,Q_0).$$ (6) If $Q_1$ contains two square forms $cx^2$ and $dx^2$ such that $c +d \not \equiv 0 \, (4)$, and $Q_2$ contains no square forms, then $$I_a^b(Q_0,Q_1,Q_2) = H_1(a,b,Q_0) + 2^{-r_1/2} \varepsilon_1 H_3(a+c,b,Q_0).$$
\end{prop}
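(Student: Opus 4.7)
The plan is essentially to translate the results of Appendix B of \cite{CKW} into our notation, substituting $t = 2^{-s}$ throughout so that their generating function for $N_f(2^\nu)$ matches our Igusa zeta function via the identity
\[
\frac{1 - 2^{-s} \zeta_{Ig}(f;2;s)}{1 - 2^{-s}} = \sum_{\nu=0}^\infty N_f(2^\nu) 2^{-\nu(s+e)}
\]
stated in Section 6. Since $f$ has been placed in normal form, no two summands share a variable, so the integral over $\mathbb{Z}_2^e$ factors according to the block decomposition and reduces to evaluating the contribution of each ``level'' $\nu$ separately.

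The key organizing idea is to stratify $\mathbb{Z}_2^e$ according to the 2-adic valuation $\nu$ of the variables entering $Q_0, Q_1, Q_2, \ldots$. At level $\nu$ one substitutes $v_i \mapsto 2^{\lceil (\nu-i)/2 \rceil} v_i'$ for the variables of $Q_i$ to clear powers of $2$; what remains is an integral over the unit sphere against a polynomial whose constant term is controlled by $a = c/2^\nu$ and whose leading quadratic part is $\mathbf{Q}_{(\nu)} \oplus 2 \mathbf{Q}_{(\nu+1)} \oplus 4 Q_{\nu+2}$. The function $I_a^b(Q_0,Q_1,Q_2)$ is exactly the local integral arising at such a level, truncated at depth $b$ to handle the interaction with the linear term $L$ when $\lambda - \nu$ is small. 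The four cases (i)--(iv) in the statement correspond respectively to the purely homogeneous case (requiring a geometric series in $2^{-2s-r}$ to encode the infinite tail), the ``wide linear'' case where $L$ dominates, the ``narrow linear'' case where $L$ and $c$ interact within a window of $2$ levels, and the constant-dominated case.

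To establish the helper functions $H_1, H_2, H_3$, I would reduce the computation of $\int |Q_0(v) + 2 Q_1 + 4 Q_2 + a|^s \, dv$ on the unit sphere to a sum of Gauss-type character sums over $\mathbb{Z}/2^k\mathbb{Z}$ for small $k$, exploiting the classification of unimodular $\mathbb{Z}_2$-forms into hyperbolic planes, one elliptic plane, and at most two square summands. The sign $\varepsilon$ encoding the presence of the elliptic plane plays the role of the quadratic reciprocity symbol $(\tfrac{(-1)^{r/2}d}{p})$ familiar from the odd-prime case of Proposition~20. The six subcases (1)--(6) arise from whether $Q_1$ and $Q_2$ contain diagonal (square) terms: these control whether the linear-in-$v$ perturbation coming from expanding $Q_0(v + 2w)$ hits a unit or not, which in turn determines whether an extra contribution from $H_3$ is activated.

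The main obstacle is bookkeeping rather than conceptual: the classification of unimodular $\mathbb{Z}_2$-lattices with the restriction $Q_i(v) = v^T S_i v$ (rather than allowing general even forms) forces us to carry around up to two residual square summands, and the congruence $c + d \equiv 0 \bmod 4$ distinguishing cases (iii) and (iv) reflects a genuine branching in the $2$-adic density. Rather than rederive all this, I would simply verify that the statement is the translation of \cite[Appendix~B]{CKW} under $t \leftrightarrow 2^{-s}$ and the identifications $\mathbf{Q}_{(\nu)}, \mathbf{p}_{(\nu)}$ above, and defer to that reference for the detailed unit-sphere integrations.
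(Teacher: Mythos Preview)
Your bottom line --- defer to Appendix~B of \cite{CKW} under the substitution $t \leftrightarrow 2^{-s}$ --- is correct and is exactly what the paper does. The paper's proof is even terser than yours: it simply records the dictionary $\mathrm{Ig}(a,b,\nu) = \mathrm{Ig}(z^{a + 2^b\mathbb{Z}_2 + 2^\nu\mathbb{Z}_2})$, $H_1 = \mathrm{Ig}(z^{a+2^b\mathbb{Z}_2}\hat H_Q)$, $H_2 = \mathrm{Ig}(z^{a+2^b\mathbb{Z}_2}\tilde H_Q)$, $H_3 = \mathrm{Ig}(z^{a+2^b\mathbb{Z}_2}(H_Q - \tilde H_Q))$ in the notation of \cite{CKW}, and then cites Appendix~B and Theorem~4.5 there.

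One point of confusion in your sketch: when you write that ``the four cases (i)--(iv) in the statement correspond respectively to the purely homogeneous case \ldots, the wide linear case \ldots'', you are describing the cases of the \emph{previous} proposition (the one computing $\zeta_{Ig}(f;2;s)$ in terms of $I_a^b$). In the present proposition, cases (i)--(iv) are indexed by the number and type of square summands in $Q_0$, and cases (1)--(6) by the square content of $Q_1,Q_2$; the linear term $L$ does not appear here at all. This does not affect your proof strategy, but it muddles the exposition. If you want to add value beyond the citation, the useful thing to supply is the explicit dictionary above rather than heuristic commentary on the stratification.
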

\begin{proof} In the notation of \cite{CKW}, $$\mathrm{Ig}(a,b,\nu) = \mathrm{Ig}(z^{a + 2^b \mathbb{Z}_2 + 2^{\nu} \mathbb{Z}_2})$$ and $$H_1(a,b,Q) = \mathrm{Ig}\Big(z^{a + 2^b \mathbb{Z}_2} \hat H_Q(z)\Big)$$ and $$H_2(a,b,Q) = \mathrm{Ig}\Big(z^{a + 2^b \mathbb{Z}_2} \tilde H_Q(z)\Big)$$ and $$H_3(a,b,Q) = \mathrm{Ig}\Big(z^{a + 2^b \mathbb{Z}_2} (H_Q(z) - \tilde H_Q(z))\Big).$$ This calculation of $I_a^b(Q_0,Q_1,Q_2)$ is available in Appendix B of \cite{CKW}. Finally, the calculation of $\zeta_{Ig}(f;2;s)$ is given in theorem 4.5 loc$.$ cit.
\end{proof}

\bibliographystyle{plainnat}
\bibliography{\jobname}

\end{document}